\begin{document}
	
	\thispagestyle{empty} 
	\def\theequation{\arabic{section}.\arabic{equation}}

	\newcommand{\codim}{\mbox{{\rm codim}$\,$}}
	\newcommand{\stab}{\mbox{{\rm stab}$\,$}}
	\newcommand{\lr}{\mbox{$\longrightarrow$}}
	
	\newcommand{\be}{\begin{equation}}
		\newcommand{\ee}{\end{equation}}
	
	\newtheorem{guess}{Theorem}[section]
	\newcommand{\bth}{\begin{guess}$\!\!\!${\bf }~}
		\newcommand{\eeth}{\end{guess}}
	\renewcommand{\bar}{\overline}
	\newtheorem{propo}[guess]{Proposition}
	\newcommand{\bpropo}{\begin{propo}$\!\!\!${\bf }~}
		\newcommand{\epropo}{\end{propo}}
	
	\newtheorem{lema}[guess]{Lemma}
	\newcommand{\blem}{\begin{lema}$\!\!\!${\bf }~}
		\newcommand{\elem}{\end{lema}}
	
	\newtheorem{defe}[guess]{Definition}
	\newcommand{\bdefe}{\begin{defe}$\!\!\!${\bf }~}
		\newcommand{\edefe}{\end{defe}}
	
	\newtheorem{coro}[guess]{Corollary}
	\newcommand{\bcor}{\begin{coro}$\!\!\!${\bf }~}
		\newcommand{\ecor}{\end{coro}}
	
	\newtheorem{rema}[guess]{Remark}
	\newcommand{\brem}{\begin{rema}$\!\!\!${\bf }~\rm}
		\newcommand{\erem}{\end{rema}}
	
	\newtheorem{exam}[guess]{Example}
	\newcommand{\beg}{\begin{exam}$\!\!\!${\bf }~\rm}
		\newcommand{\eeg}{\end{exam}}
	
	\newtheorem{notn}[guess]{Notation}
	\newcommand{\bnot}{\begin{notn}$\!\!\!${\bf }~\rm}
		\newcommand{\enot}{\end{notn}}

	\newcommand{\ch}{{\mathcal H}}
	\newcommand{\cf}{{\mathcal F}}
	\newcommand{\cd}{{\mathcal D}}
	\newcommand{\cR}{{\mathcal R}}
	\newcommand{\cv}{{\mathcal V}}
	\newcommand{\cn}{{\mathcal N}}
	\newcommand{\lra}{\longrightarrow}
	\newcommand{\ra}{\rightarrow}
	\newcommand{\blr}{\Big \longrightarrow}
	\newcommand{\da}{\Big \downarrow}
	\newcommand{\ua}{\Big \uparrow}
	\newcommand{\hra}{\mbox{{$\hookrightarrow$}}}
	\newcommand{\rt}{\mbox{\Large{$\rightarrowtail$}}}
	\newcommand{\dua}{\begin{array}[t]{c}
			\Big\uparrow \\ [-4mm]
			\scriptscriptstyle \wedge \end{array}}
	\newcommand{\ctext}[1]{\makebox(0,0){#1}}
	\setlength{\unitlength}{0.1mm}
	\newcommand{\cl}{{\mathcal L}}
	\newcommand{\cp}{{\mathcal P}}
	\newcommand{\ci}{{\mathcal I}}
	\newcommand{\bz}{\mathbb{Z}}
	\newcommand{\cs}{{\mathcal s}}
	\newcommand{\ce}{{\mathcal E}}
	\newcommand{\ck}{{\mathcal K}}
	\newcommand{\cz}{{\mathcal Z}}
	\newcommand{\cg}{{\mathcal G}}
	\newcommand{\ct}{{\mathcal T}}
	\newcommand{\cj}{{\mathcal J}}
	\newcommand{\cc}{{\mathcal C}}
	\newcommand{\ca}{{\mathcal A}}
	\newcommand{\cb}{{\mathcal B}}
	\newcommand{\cx}{{\mathcal X}}
	\newcommand{\co}{{\mathcal O}}
	\newcommand{\bq}{\mathbb{Q}}
	\newcommand{\bt}{\mathbb{T}}
	\newcommand{\bh}{\mathbb{H}}
	\newcommand{\br}{\mathbb{R}}
	\newcommand{\bl}{\mathbf{L}}
	\newcommand{\wt}{\widetilde}
	\newcommand{\im}{{\rm Im}\,}
	\newcommand{\bc}{\mathbb{C}}
	\newcommand{\bp}{\mathbb{P}}
	\newcommand{\ba}{\mathbb{A}}
	\newcommand{\bn}{\mathbb{N}}
	\newcommand{\spin}{{\rm Spin}\,}
	\newcommand{\ds}{\displaystyle}
	\newcommand{\tor}{{\rm Tor}\,}
	\newcommand{\bff}{{\bf F}}
	\newcommand{\bs}{\mathbb{S}}
	\def\ns{\mathop{\lr}}
	\def\nssup{\mathop{\lr\,sup}}
	\def\nsinf{\mathop{\lr\,inf}}
	\renewcommand{\phi}{\varphi}
	\newcommand{\tT}{{\widetilde{T}}}
	\newcommand{\tG}{{\widetilde{G}}}
	\newcommand{\tB}{{\widetilde{B}}}
	\newcommand{\tC}{{\widetilde{C}}}
	\newcommand{\tW}{{\widetilde{W}}}
	\newcommand{\tphi}{{\widetilde{\Phi}}}

	\title{Equivariant cohomology of juggling varieties in rank one}
	\author[Bidhan Paul]{Bidhan Paul}
	\address{School of Mathematical Sciences, Tel Aviv University, Tel Aviv, 69978,
		Israel}
	\email{bidhanam95@gmail.com; bidhanp@tauex.tau.ac.il}

	\subjclass{16G20, 14M15, 55N91}

	\keywords{Quiver Grassmannians, Affine flag varieties,   equivariant cohomology,  GKM theory}

	\begin{abstract}
		We determine the ring structure of the torus-equivariant cohomology of rank-one juggling varieties with rational coefficients. By realizing these varieties as cyclic quiver Grassmannians, we construct a Knutson--Tao type basis for their equivariant cohomology. Using this basis, we give an explicit description of the ring structure in terms of generators and relations, and compute the corresponding structure constants. Finally, we show that these structure constants are integral.
	\end{abstract}

	\maketitle

	\section{Introduction}
The geometry of algebraic varieties arising from representation theory has long served as a bridge between algebraic, combinatorial, and topological structures. Among the most fundamental examples are \emph{quiver Grassmannians} and \emph{affine flag varieties}, which provide geometric models for phenomena in the representation theory of quivers, Lie algebras, and quantum groups. These constructions extend the classical theory of Grassmannians and flag varieties, which have stood for decades as central objects in algebraic geometry and representation theory \cite{carter, k}.

Given a quiver $Q$, a $Q$-representation $M$, and a dimension vector $\mathbf{d}$, the associated quiver Grassmannian $\mbox{Gr}_{\mathbf{d}}(M)$ parametrizes $\mathbf{d}$-dimensional subrepresentations of $M$. These varieties have been studied extensively over the past two decades and have proved useful in diverse areas such as cluster algebras, degenerations of flag varieties, and algebraic combinatorics \cite{cer11, ifr12, ifr13, cer16, cfffr17, sc14}. In particular, the cyclic quiver Grassmannians $X(k,n,\omega)$ (cf. Definition \ref{cqG}) naturally appear in the theory of local models of Shimura varieties \cite{PappasRapoport2005, prs,HainesNgo2002, Gortz2001} and in the study of totally nonnegative Grassmannians and degenerate flag varieties \cite{Feigin, flp24, flp25, p22}. Following \cite{Knutson}, we refer to the varieties $X(k,n,\omega)$ as the \emph{juggling varieties} of rank $n-1$.

A powerful unifying framework for studying the \emph{equivariant cohomology} of such varieties is provided by the \emph{Goresky--Kottwitz--MacPherson (GKM) theory} \cite{gkm, t05}. When a  complex variety $X$ admits an algebraic torus $T$-action with finitely many fixed points and one-dimensional orbits, its equivariant cohomology ring $H_T^*(X)$ can be described combinatorially via its \emph{moment graph}. The vertices of this graph correspond to torus fixed points, while the edges represent one-dimensional orbits labeled by the corresponding tangent weights. This combinatorial model allows one to reconstruct $H_T^*(X)$ entirely from graph data, thereby translating geometric questions into explicit combinatorial computations.

In the case of type~$A$ cyclic quiver Grassmannians, there exists a natural algebraic torus action that satisfies the GKM conditions \cite{lp23, lp231}, enabling a purely combinatorial description of their equivariant cohomology. For integers $n \in \mathbb{N}$, $k \leq n$, and $\omega \geq 1$, the cyclic quiver Grassmannian $X(k,n,\omega)$ admits a natural embedding into the affine flag variety $\mathscr{AF}_n$. Moreover, the affine flag variety $\mathscr{AF}_n$ can be realized as the union of all such quiver Grassmannians $X(k,n,\omega)$. Consequently, the equivariant cohomology of affine flag varieties can be studied and described through the geometry and combinatorics of cyclic quiver Grassmannians.

In this article, we focus on type~$A$ cyclic quiver Grassmannians for $n=2$ ($k=1$ and arbitrary $\omega$) and provide explicit descriptions of their equivariant cohomology rings with rational coefficients in terms of generators and relations. We construct the associated moment graphs, identify the torus fixed points and one-dimensional orbits, and derive explicit GKM-type relations describing their equivariant cohomology. As a corollary, we obtain a description (cf. Corollary \ref{theo:main_afg}) of the equivariant cohomology ring of the affine flag variety of type~$A_1$ (also see \cite{Ande, yun, park2017gkm}). For general $n$, the case is significantly more complex and requires additional algebraic and combinatorial tools; it will be addressed in a future work.


We hope that this paper will be of interest to readers familiar with quiver Grassmannians as well as those working in GKM theory and equivariant geometry. The paper is organized as follows. Section~\ref{s:preli} reviews the necessary background on quiver Grassmannians, affine flag varieties, and GKM theory, and describes the moment graph of $X(1,2,\omega)$. In Section~\ref{s:kt_classes, formula}, we introduce the Knutson–Tao classes for $X(1,2,\omega)$, which form a $\mathbb{Q}[T]$-basis of its equivariant cohomology, and discuss various relations among them. Section~\ref{s:gen_rel} presents the ring structures of the equivariant cohomology of $X(1,2,\omega)$ and of the affine flag variety of type~$A_1$ via generators and relations, using the GKM description. Finally, Section~\ref{s:int_str_cons} discusses the integrality of structure constants of these rings.

\section{Preliminaries}\label{s:preli}
In this section, we summarize several fundamental definitions and facts about quiver Grassmannians \cite{cer11, cer16, sc14}, affine flag varieties \cite{p22,k},  and GKM varieties \cite{gkm} that will be used throughout the paper. In Subsection \ref{ss:mg_of_cqG}, we describe the moment graph of $X(1,2,\omega)$ with respect to $T:=(\bc^*)^2$-action.
	
A \emph{(finite) quiver}  $Q=(Q_0,~Q_1)$ consists of a finite set of vertices $Q_0$ and a finite set of oriented edges (or arrows) $Q_1$ between the vertices. For an oriented edge $e\in Q_1$, we denote the initial and terminal vertices of $e$ by $i(e)$ and $t(e)$ respectively. A \emph{$Q$-representation} $M$ is given by  $((M^{v})_{v\in Q_0},~(M_e)_{e\in Q_1})$, where the $M^{v}$ are $\bc$-vector spaces and each $M_e$ is a linear map from $M^{i(e)}$ to $M^{t(e)}$. Further, the \emph{dimension vector} of $Q$-representation $M$ is \[\mathbf{dim}~ M:= (\mbox{dim}~ M^v)_{v\in Q_0}\in \bz^{Q_0}_{\geq 0}.\]
	
For two $Q$-representations $M$ and $N$, a \emph{$Q$-morphism} $\psi : M \to N$ is a family of linear maps
$(\psi_v:M^v\to N^v)_{v\in Q_0}$ 
such that for every edge $e : v \to w$ in $Q$ the following diagram commutes:
\[\begin{tikzcd} M^{v} \arrow[r,"\psi_v"] \arrow[d,"M_e"'] & N^v \arrow[d,"N_e"] \\ M^w \arrow[r,"\psi_w"'] & N^w \arrow[ul,phantom,"\circlearrowleft"] \end{tikzcd}\]

The collection of all $Q$-morphisms from $M$ to $N$ is denoted by $\mbox{Hom}_Q(M,N)$.  The $Q$-representations, together with these morphisms, form a category, written $\mbox{rep}_{\mathbb{C}}(Q)$. 

A \emph{subrepresentation} $N\subseteq M$ is given by a tuple of vector subspaces $N^v\subseteq M^v$, such that $M_e(N^v)\subseteq N^w$ for all edges $e :v\to w$ of $Q$. 
 
 \begin{defe}[Quiver Grassmannian]\label{def:qG}
For $\mathbf{d}\in \mathbb{Z}_{\geq 0}^{Q_0}$, the \emph{quiver Grassmannian} $\mbox{Gr}_{\mathbf{d}}(M)$ is the variety of all subrepresentations $N\subseteq M$ with $\mathbf{dim}~ N=\mathbf{d}$. In particular, we have
\begin{equation}
	\mbox{Gr}_{\mathbf{d}}(M):=\Big\{(N^v)_{v\in Q_0}\in\prod_{v\in Q_0}	\mbox{Gr}_{\mathbf{d}_v}(M^v) : M_e(N^v)\subseteq N^w,~\forall e :v\to w \in Q_1 \Big\}.
	\end{equation}
 \end{defe}

\begin{exam}
Let $Q$ be an equioriented quiver with $Q_0=\{1,\ldots,n\}$	and $Q_1=\{i\to i+1:\forall i=1,\ldots, n-1\}$. Let the dimension vector $\mathbf{d} = (1,2,\ldots,n)$. 
Consider the $Q$-representation $M$ defined by 
\begin{enumerate}[(i)]
	\item $M^v=\bc^{n+1}$ for any $v\in Q_0$ and, $M_e=\mbox{id}_{\bc^{n+1}}$, for any $e\in Q_1$.
	Then, 	the quiver Grassmannian $\mathrm{Gr}_{\mathbf{d}}(M)$ is isomorphic to the full flag variety $\mathrm{Flag}(n+1)$ in $\mathbb{C}^{n+1}$.
	\item If we relax the map conditions in the above $M$, then $\mathrm{Gr}_{\mathbf{d}}(M)$ is a linear degeneration of  $\mathrm{Flag}(n+1)$ (see, \cite{cfffr17}).
\end{enumerate} 
\end{exam}

If $\mathbf{d}_v > \mbox{dim} ~M^v$ for some $v\in Q_0$, then $\mbox{Gr}_{\mathbf{d}}(M)=\varnothing$.  Further, the variety structure of $\mbox{Gr}_{\mathbf{d}}(M)$ is induced by the embedding into the classical Grassmannian of $\sum_{v\in Q_0} \mathbf{d}_v$-dimensional subspaces of \[V=\bigoplus_{v\in Q_0} M^v,\] and is independent of the choice of bases of the $M^v$'s.

For  an element $N \in \mbox{Gr}_{\mathbf{d}}(M)$, the isomorphism class $[N]$ (known as \emph{stratum}) in $\mbox{Gr}_{\mathbf{d}}(M)$ is irreducible \cite[Lemma 2.4]{ifr12}.
The automorphism group $\mbox{Aut}_Q(M)\subset \mbox{End}_Q(M)$ acts on $\mbox{Gr}_{\mathbf{d}}(M)$ as 
\[A\cdot (N^v)_{v\in Q_0}:= \Big(A_v(N^v)\Big)_{v\in Q_0} \text{for } A\in \mbox{Aut}_Q(M) \text{ and }  N\in \mbox{Gr}_{\mathbf{d}}(M).\]

\subsection{Cyclic quiver Grassmannians}\label{ss:cqG}
Let $\Delta_n$ be the equioriented cycle with vertex set $\mathbb{Z}_n:=\mathbb{Z}/n\mathbb{Z}$ and edges $e: a\to a+1$ for all $a\in \mathbb{Z}_n$. For $m\geq 2$, consider the $\Delta_n$-representation
\begin{equation}\label{defU_m}
	U_m := ((M^{i})_{i\in \bz_n},~(M_a)_{a\in \bz_n}),
\end{equation}
where $M^i=\mathbb{C}^m$ for every $i\in \mathbb{Z}_n$. Denote by $\mathfrak{B}^i=\{b_j^i \mid j\in[m]\}$ the standard basis of the $i$-th copy of $\mathbb{C}^m$. Each arrow map $M_a$ is defined by
\[M_a(b^a_j)=\begin{cases}
	b^{a+1}_{j+1}, & j\in[m-1]\\
	0,&j=m
\end{cases}.\]

\begin{defe}[Cyclic quiver Grassmannian]\label{cqG}
For fixed numbers $k,\omega\geq 1$ with $k\leq n$, we define the cyclic 
 quiver Grassmannian \[X(k,n,\omega):= \mbox{Gr}_{(k\omega,\ldots,k\omega)}(U_{\omega n}).\]
\end{defe}

\begin{rema}
Note that $X(k,n,\omega)$ is a projective variety of dimension $\omega k(n-k)$ \cite[Lemma 4.9]{p22}. Following \cite{Knutson}, we refer to the varieties $X(k,n,\omega)$ as the \emph{juggling varieties} of rank $n-1$. They appear as concrete realizations of the local models of Shimura varieties for $G=GL_n$ and minuscle coweights \cite[Section 7.1]{prs}.
\end{rema}

\begin{rema}\label{embed-qG-to-Gr}
	From Definition \ref{cqG}, one can have the following natural embedding 
	\begin{equation}
	\phi:	X(k,n,\omega)\hookrightarrow \prod_{i=1}^{n} \mbox{Gr}_{k\omega} (M^i),
	\end{equation}
	where for each $i=1,\ldots,n$, $M^i$ are $n\omega$-dimensional vector spaces with bases $\{b^i_j,~j\in[n\omega]\}$ (see \eqref{defU_m}).
\end{rema}

The one-dimensional torus $\mathbb{C}^*$ acts on the vector spaces of $U_{\omega n}$ with weights specified by
$\mbox{wt}(b_j^i):=j$ for all $i\in\bz_n$ and $j\in [\omega n]$. 
By \cite[Lemma 1.1]{cer11}, this action extends naturally to $X(k,n,\omega)$, and the corresponding $\mathbb{C}^*$-fixed point set in $X(k,n,\omega)$ is finite \cite[Theorem 1]{cer11}. 

Moreover, the above action coincides with  a cocharacter of an $(n+1)$-dimensional algebraic torus $(\bc^*)^{n+1}$, which acts on the vector spaces of $U_{\omega n}$ by 
\begin{equation}\label{torusact}
	(t_0,t_1,\ldots,t_n)\cdot b^i_j:=t_0^{j-1}t_{i-j+1}b^i_j \text{ for } (t_0,t_1,\ldots,t_n)\in (\bc^*)^{n+1}.
\end{equation}
This agrees with the torus action on $\Delta_n$-representations as described in \cite{lp23}, and therefore, by \cite[Lemma 5.12]{lp23}, it extends to $X(k,n,\omega)$.

Consider the $1$-dimensional subtorus
\[
T_1 := \{(1,t,\ldots,t) \mid t \in \mathbb{C}^*\} \subset (\mathbb{C}^*)^{n+1},
\]
which acts  by scalar multiplication (cf.  \eqref{torusact}) on the vector spaces of $U_{\omega n}$, and hence trivially on the quiver Grassmannian $X(k,n,\omega)$. 
We therefore consider the action of $n$-dimensional algebraic torus 
\[
T := (\mathbb{C}^*)^{n+1}/T_1 \cong (\mathbb{C}^*)^n.
\]
\begin{lema}
	The set of $T$-fixed points in $X(k,n,\omega)$ coincides with the $\mathbb{C}^*$-fixed point set in $X(k,n,\omega)$ and is finite.
\end{lema}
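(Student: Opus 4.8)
The plan is to realize the original $\bc^*$-action as the restriction of the $T$-action along a one-parameter subgroup, and then to invoke the elementary fact that a connected algebraic group acting on a finite set acts trivially.

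First I would identify the $\bc^*$-action with a cocharacter of $(\bc^*)^{n+1}$. The cocharacter $\mu:\bc^*\to(\bc^*)^{n+1}$ given by $\mu(s)=(s,s,\ldots,s)$ acts on $U_{\omega n}$ by $\mu(s)\cdot b^i_j=s^{j-1}\cdot s\cdot b^i_j=s^{j}b^i_j$ (cf.\ \eqref{torusact}), which is precisely the weight assignment $\mathrm{wt}(b^i_j)=j$ defining the $\bc^*$-action. Since the subtorus $T_1$ acts by scalars and hence trivially on $X(k,n,\omega)$, the $(\bc^*)^{n+1}$-action factors through $T=(\bc^*)^{n+1}/T_1$. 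Writing $[\,\cdot\,]$ for the quotient map and using $\mu(s)=(s,1,\ldots,1)\cdot(1,s,\ldots,s)$ with $(1,s,\ldots,s)\in T_1$, the image $S:=[\mu(\bc^*)]$ is a one-dimensional subtorus of $T$, and the $\bc^*$-action on $X(k,n,\omega)$ coincides with the $S$-action. In particular the fixed loci agree, $X(k,n,\omega)^{\bc^*}=X(k,n,\omega)^{S}$.

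Next, since $S\subseteq T$, every $T$-fixed point is $S$-fixed, which gives the inclusion $X(k,n,\omega)^{T}\subseteq X(k,n,\omega)^{S}=X(k,n,\omega)^{\bc^*}$. For the reverse inclusion I would use that $T$ is abelian, so it centralizes $S$ and therefore preserves the $S$-fixed locus $X(k,n,\omega)^{S}$. By \cite[Theorem 1]{cer11} this locus is finite, and because $T$ is connected the orbit of each of its points is both connected and finite, hence a single point. Thus $T$ acts trivially on $X(k,n,\omega)^{S}$, yielding $X(k,n,\omega)^{\bc^*}=X(k,n,\omega)^{S}\subseteq X(k,n,\omega)^{T}$. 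Combining the two inclusions gives $X(k,n,\omega)^{T}=X(k,n,\omega)^{\bc^*}$, and finiteness is inherited from the $\bc^*$-fixed locus.

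The computations are routine; the only point requiring care is the bookkeeping in the first step, namely checking that the chosen cocharacter reproduces the correct weights and that passing to the quotient $T$ does not alter the fixed-point set. Everything else reduces to the standard observation that a connected group cannot permute a finite set nontrivially, so I do not expect a genuine obstacle.
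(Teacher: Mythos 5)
Your proof is correct, but it takes a genuinely different route from the paper. The paper disposes of the lemma in one line by citation: the coincidence of the $T$-fixed locus with the $\mathbb{C}^*$-fixed locus is quoted from \cite[Theorem 5.14]{lp23}, and finiteness from \cite[Theorem 1]{cer11}. You instead give a self-contained argument: you exhibit the cocharacter $\mu(s)=(s,\ldots,s)$ explicitly, check via \eqref{torusact} that it reproduces the weights $\mathrm{wt}(b^i_j)=j$, observe that its image $S$ in $T=(\mathbb{C}^*)^{n+1}/T_1$ acts as the original $\mathbb{C}^*$ does, and then close the loop with the standard fact that the connected group $T$, which normalizes the finite set $X^{S}$ because $T$ is abelian, must fix it pointwise. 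The only external input you need is the same finiteness theorem \cite[Theorem 1]{cer11} that the paper also uses. What your approach buys is transparency and independence from the GKM machinery of \cite{lp23}: it makes clear that, once finiteness of the $\mathbb{C}^*$-fixed locus is known, the equality $X^T=X^{\mathbb{C}^*}$ is an elementary consequence of connectedness and requires no genericity analysis of the cocharacter. What the paper's approach buys is brevity and consistency, since \cite[Theorem 5.14]{lp23} is in any case the result underpinning the GKM structure used throughout the rest of the article. Your verification of the weight bookkeeping (including the factorization $\mu(s)=(s,1,\ldots,1)\cdot(1,s,\ldots,s)$ with the second factor in $T_1$) is accurate, so there is no gap.
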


\begin{proof}
	The first part follows from \cite[Theorem 5.14]{lp23} while the second follows from \cite[Theorem 1]{cer11}.
\end{proof}

We now give an explicit parametrization of the $T$-fixed points of $X(k,n,\omega)$. For $m\leq n$, let $\binom{[n]}{m}$ denote the set of all $m$-element subsets of $[n]$.
\begin{defe}[Generalized juggling patterns]
	For $k,n,\omega\in\bn$ and $k\leq n$, the collection of $(k,n,\omega)$ -juggling patterns is as follows :
	\[\mathscr{J}ug(k,n,\omega):=\Big\{(J_i)_{i\in\bz_n}\in\prod_{i\in\bz_n}\binom{[n\omega]}{k\omega} ~:~J_i+1\subset J_{i+1} \text{ for all } i\in\bz_n\Big\}\]
\end{defe}

\begin{propo}\label{bij:fixedpttojug}(\cite[Lemma 2.8]{flp24})
	The $T$-fixed points in $X(k,n,\omega)$ are in bijection with $\mathscr{J}ug(k,n,\omega)$.
\end{propo}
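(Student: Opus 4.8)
The plan is to reduce the statement to a computation of $T$-fixed coordinate subspaces inside each Grassmannian factor, and then to translate the subrepresentation condition of $U_{\omega n}$ into the combinatorial shift condition defining $\mathscr{J}ug(k,n,\omega)$. First I would use the $T$-equivariant closed embedding $\phi$ of Remark \ref{embed-qG-to-Gr} to write $X(k,n,\omega)^T = \phi^{-1}\big((\prod_{i=1}^n \mbox{Gr}_{k\omega}(M^i))^T\big)$, so that a point $N=(N^i)_{i\in\bz_n}$ is $T$-fixed precisely when each $N^i \subseteq M^i$ is a $T$-invariant subspace. Since the subtorus $T_1$ acts by scalars on every $M^i$ (cf. \eqref{torusact}) and hence trivially on each Grassmannian, a subspace is $T$-fixed if and only if it is fixed by the full $(\bc^*)^{n+1}$-action.

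Next I would determine these fixed subspaces. By \eqref{torusact}, $(\bc^*)^{n+1}$ acts diagonally on $M^i$ in the basis $\mathfrak{B}^i$, the vector $b^i_j$ having weight $t_0^{j-1}t_{i-j+1}$. Within a single factor $i$ the exponent $j-1$ of $t_0$ already separates the weights of $b^i_1,\ldots,b^i_{\omega n}$, so these weights are pairwise distinct. The standard description of fixed loci of diagonalizable torus actions then gives that every $T$-fixed point of $\mbox{Gr}_{k\omega}(M^i)$ is a coordinate subspace $N^i = \mathrm{span}\{b^i_j : j\in J_i\}$ for a unique subset $J_i\subseteq[\omega n]$, and the dimension constraint forces $J_i\in\binom{[\omega n]}{k\omega}$. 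This assigns to each $T$-fixed point $N$ a tuple $(J_i)_{i\in\bz_n}\in\prod_{i\in\bz_n}\binom{[\omega n]}{k\omega}$, and the assignment is injective because a coordinate subspace is determined by its index set.

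It then remains to identify the image with $\mathscr{J}ug(k,n,\omega)$. For each arrow $e:i\to i+1$ one has $M_e(b^i_j)=b^{i+1}_{j+1}$ for $j<\omega n$ and $M_e(b^i_{\omega n})=0$ (cf. \eqref{defU_m}), so $M_e(N^i)=\mathrm{span}\{b^{i+1}_{j+1}:j\in J_i,\, j<\omega n\}$. The subrepresentation condition $M_e(N^i)\subseteq N^{i+1}$ is therefore equivalent to requiring $j+1\in J_{i+1}$ for every $j\in J_i$ with $j<\omega n$; the boundary index $j=\omega n$ imposes no condition since the arrow kills $b^i_{\omega n}$. Under the convention that $J_i+1$ denotes $\{j+1 : j\in J_i\}\cap[\omega n]$, this is exactly the defining relation $J_i+1\subset J_{i+1}$ of a juggling pattern. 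Conversely, any $(J_i)_{i\in\bz_n}\in\mathscr{J}ug(k,n,\omega)$ yields coordinate subspaces satisfying all the arrow conditions, hence an honest point of $X(k,n,\omega)$ that is $T$-fixed, giving surjectivity and establishing the bijection.

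The step I expect to require the most care is the second one: checking that $T$-fixed subspaces are forced to be coordinate subspaces. This rests on the two observations that $T_1$ acts trivially on the Grassmannians (so that the $T$- and $(\bc^*)^{n+1}$-fixed loci coincide) and that the $t_0$-grading separates the basis of each $M^i$; without the latter, $T$-fixed subspaces could be arbitrary sums of higher-dimensional weight spaces and would not correspond to subsets of $[\omega n]$. The remaining bookkeeping, in particular the boundary behaviour at $j=\omega n$, is then routine.
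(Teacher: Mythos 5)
Your proposal is correct, and it is the natural argument for this statement. Note that the paper itself offers no proof here: Proposition \ref{bij:fixedpttojug} is quoted directly from \cite[Lemma 2.8]{flp24}, so there is nothing in the text to compare against beyond the citation. Your argument — reduce to $T$-fixed subspaces in each Grassmannian factor via the equivariant embedding of Remark \ref{embed-qG-to-Gr}, use the pairwise-distinct weights (already separated by the $t_0$-grading, or equivalently by the $\mathbb{C}^*$-action with $\mathrm{wt}(b^i_j)=j$) to force fixed subspaces to be coordinate subspaces, and then translate the conditions $M_e(N^i)\subseteq N^{i+1}$ into the shift condition $J_i+1\subset J_{i+1}$ — is exactly the standard route, and it is essentially how the cited lemma is proved in \cite{flp24}. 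You are also right to flag the two points that carry the weight of the argument: the triviality of the $T_1$-action (so that $T$-fixed and $(\mathbb{C}^*)^{n+1}$-fixed loci agree) and the truncation convention for $J_i+1$ at the index $\omega n$, which is forced on cardinality grounds for the definition of $\mathscr{J}ug(k,n,\omega)$ to be non-vacuous and which matches the vanishing $M_e(b^i_{\omega n})=0$.
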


\subsubsection{Bia\l ynicki--Birula decomposition}
We recall the $\bc^*$-action on $X:=X(k,n,\omega)$ and let $\{v_1,\ldots,v_m\}$ be the $\bc^*$-fixed point set in $X$. This  action gives rise to a stratification 
\begin{equation}
X=\bigcup_{i\in [m]}X_i~~\text{ where, } X_i:=\Big\{v\in X:\substack{\mbox{lim}\\{z\to 0}}~z\cdot v=v_i\Big\},
\end{equation}
This type of decomposition was first studied by Bia\l ynicki--Birula \cite{bb}, and therefore we refer it as \textbf{Bia\l ynicki--Birula decomposition}.

\begin{guess}{\cite[Theorem 2.11]{flp24}}
For $\omega \geq 1$ and $k \leq n$, the variety $X(k,n,\omega)$ satisfies the following properties:
\begin{enumerate}[(i)]
	\item the Bia\l ynicki–Birula (BB) decomposition is a cellular decomposition;
	\item each cell $C\subset X(k,n,\omega)$ is $T$-stable and contains exactly one $T$-fixed point $p_C$;
	\item its irreducible components $X_I(k,n,\omega)$ are indexed by the $k$-element subsets $I \subset [n]$ and are equidimensional;
	\item each $X_I(k,n,\omega)$ is normal, Cohen–Macaulay, and has rational singularities. 
\end{enumerate}
\end{guess}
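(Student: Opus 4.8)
The plan is to exploit two structures simultaneously: the $\bc^*$-action whose attracting sets define the Bia\l ynicki--Birula strata, and the embedding $\phi$ of $X(k,n,\omega)$ into a product of ordinary Grassmannians (Remark \ref{embed-qG-to-Gr}), equivalently into the affine flag variety $\mathscr{AF}_n$. For parts (i) and (ii), I would fix a $\bc^*$-fixed point $v_i$ and describe its attracting set $X_i$ by an explicit linear parametrization. The ambient product of Grassmannians carries its own $\bc^*$-stratification into Schubert cells, each an affine space whose coordinates are the entries of a reduced row-echelon matrix below the pivot positions. Intersecting such a cell with the quiver-Grassmannian conditions $M_e(N^v)\subseteq N^w$ imposes linear equations on those coordinates, so each $X_i$ is cut out as an affine subspace of an affine space and is therefore isomorphic to some $\ba^{d_i}$. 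Since $X(k,n,\omega)$ may be singular, the cellularity in (i) cannot be read off from the smooth Bia\l ynicki--Birula theorem and must be obtained from precisely this explicit echelon parametrization. For (ii), $T$-stability is immediate because the defining $\bc^*$ is a cocharacter of $T$ (cf. \eqref{torusact}) and hence commutes with the full torus, so $T$ preserves each attracting set; as every cell retracts $\bc^*$-equivariantly onto its center $v_i$, and the $T$-fixed points coincide with the $\bc^*$-fixed points by the preceding Lemma, each cell contains exactly one $T$-fixed point $p_C = v_i$.

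For part (iii), I would determine which cells have top dimension. Using the bijection of Proposition \ref{bij:fixedpttojug}, each cell is labeled by a juggling pattern $(J_i)_{i\in\bz_n}$, and its dimension $d_i$ can be read off as the number of free echelon coordinates surviving the quiver relations. The irreducible components are the closures of the maximal-dimensional cells; grouping the corresponding juggling patterns according to the residue data left invariant under the shift $J_i \mapsto J_i + 1$ should produce exactly one maximal cell for each $k$-element subset $I \subset [n]$, yielding the indexing $X_I(k,n,\omega)$. Equidimensionality then follows by checking that each such maximal cell has dimension equal to $\omega k(n-k)$, matching the dimension of $X(k,n,\omega)$ from \cite[Lemma 4.9]{p22}.

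For part (iv), the cleanest route is to identify each irreducible component $X_I(k,n,\omega)$, via the embedding into $\mathscr{AF}_n$, with an affine Schubert variety (or a closed subvariety sharing its local structure). Once this identification is in place, normality, Cohen--Macaulayness, and rationality of singularities are inherited from the corresponding known results for Schubert varieties in affine Kac--Moody flag varieties, which in characteristic zero follow from Frobenius-splitting / Mehta--Ramanathan type theorems. Alternatively, one can construct a Bott--Samelson type resolution $\wt{X}_I \to X_I(k,n,\omega)$ directly from the quiver data and verify that it has connected fibers with vanishing higher direct images of the structure sheaf, which gives rational singularities and, together with the smoothness of the resolution, the full statement.

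The main obstacle I anticipate lies in parts (iii) and (iv): establishing cellularity in (i) is essentially bookkeeping once the echelon description is set up, but matching the maximal cells precisely to $k$-subsets and identifying $X_I(k,n,\omega)$ with genuine affine Schubert varieties requires a careful translation between the juggling-pattern combinatorics and the affine Weyl group together with its Bruhat order. In particular, verifying equidimensionality and the singularity properties uniformly across all components, rather than component by component, is the delicate point, and the Schubert-variety identification (or the construction of a resolution respecting all components at once) is where the bulk of the technical work will concentrate.
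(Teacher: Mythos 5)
First, a point of order: the paper does not prove this statement at all --- it is imported verbatim, with citation, from \cite[Theorem 2.11]{flp24}, so there is no internal proof to compare against. Judged against the actual proofs in the cited chain of references (\cite{cer11, lp23, p22, flp24}), your sketch follows essentially the same route: explicit coordinates on the $\bc^*$-attracting sets inside the ambient product of Grassmannians of Remark \ref{embed-qG-to-Gr} for (i)--(ii), with $T$-stability coming from the fact that the defining $\bc^*$ is a cocharacter of the abelian group $T$; juggling-pattern combinatorics matching the top-dimensional cells with $k$-element subsets of $[n]$ for (iii); and transfer of normality, Cohen--Macaulayness and rational singularities from affine Schubert varieties through the embedding into $\mathscr{AF}_n$ for (iv) (cf.\ Theorem \ref{theo:afv_in_terms_of_qG}). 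Your diagnosis that the weight of the argument sits in the dictionary between juggling patterns, the affine Weyl group and the Bruhat order is also where the published arguments put it.

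Two steps in your sketch would fail as literally stated. The first is the claim that the conditions $M_e(N^v)\subseteq N^w$ impose \emph{linear} equations on the echelon coordinates of the ambient Schubert cells, so that each stratum is an affine subspace. They do not: writing $N^v$ and $N^w$ in echelon form, the condition that $M_e$ applied to a row of $N^v$ lies in $N^w$ says that each non-pivot entry of the image vector equals the combination of the rows of $N^w$ dictated by its pivot entries, and this is \emph{bilinear} in the coordinates of $N^v$ and of $N^w$. The strata are indeed affine spaces, but proving it requires the nilpotency of the arrow maps of $U_{\omega n}$ together with the grading by $\bc^*$-weights, which renders the bilinear system triangular so that the dependent coordinates can be eliminated iteratively; this is exactly the content of the cell-decomposition arguments in \cite{cer11, lp23}, and without it part (i) is not established. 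The second gap is in (iv): a pointwise (or even cellwise) identification of $X_I(k,n,\omega)$ with an affine Schubert variety is not enough to transport singularity properties. The standard repair is to produce a proper, bijective, birational morphism from $X_I(k,n,\omega)$ onto the Schubert variety $X_{w(I)}$ and invoke Zariski's main theorem together with the known normality of the \emph{target}; only then is the morphism an isomorphism, after which normality, Cohen--Macaulayness and rational singularities are inherited. Your sketch leaves this direction-of-normality issue implicit, and your alternative route via a Bott--Samelson-type resolution with connected fibers and vanishing higher direct images would be substantially harder than the identification it is meant to replace.
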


\subsection{Affine flag varieties}\label{ss:afg}

Let $\widehat{\mathfrak{sl_n}}:=\mathfrak{sl}_n(\mathbb{C}) \otimes_{\mathbb{C}} \mathbb{C}[t, t^{-1}]
\;\oplus\; \mathbb{C}c \;\oplus\; \mathbb{C}d$ be the \emph{affine Kac--Moody Lie algebra} of type $A_{n-1}^{(1)}$,  where $c$ is a central element and $d$ is the degree derivation.  	The Lie bracket is given by
\[\begin{split}
	[x \otimes t^m,\, y \otimes t^n] 
&= [x,y] \otimes t^{m+n} 
+ m\, \delta_{m,-n} \, \mathrm{Tr}(xy)\, c,\\
[c,\, \widehat{\mathfrak{sl}}_n] = 0,&
\qquad 
[d,\, x \otimes t^m] = m\, x \otimes t^m,\end{split}
\]
for all $x, y \in \mathfrak{sl}_n(\mathbb{C})$ and $m,n \in \mathbb{Z}$.

Fix a Cartan decomposition 
\[
\mathfrak{sl}_n = \mathfrak{h} \oplus \mathfrak{n} \oplus \mathfrak{n}_-,
\]
and let $\mathfrak{b} = \mathfrak{h} \oplus \mathfrak{n}$ be the corresponding Borel subalgebra. 
The \emph{Iwahori subalgebra} of $\widehat{\mathfrak{sl}}_n$ is then given by
\[
\mathfrak{b} \otimes 1 \;\oplus\; \mathfrak{sl}_n \otimes t\,\mathbb{C}[t].
\]

Let $\widehat{SL}_n$ denote the affine Kac--Moody group with Lie algebra $\widehat{\mathfrak{sl}}_n$  {\cite{k}}. 
This group contains the finite-dimensional torus $\exp(\mathfrak{h})$ 
and the two-dimensional torus $(\mathbb{C}^*)^2 = \exp(\mathbb{C}c \oplus \mathbb{C}d)$. 
We denote by $\mathfrak{B} \subset \widehat{SL}_n$ the corresponding \emph{Iwahori subgroup}, 
which consists of matrices 
$M(t) \in SL_n(\mathbb{C}[t]) \subset \widehat{SL}_n$ 
such that $M(0)$ is upper triangular. 
In particular, the Lie algebra of $\mathfrak{B}$ is 
$\mathfrak{b} \otimes 1 \;\oplus\; \mathfrak{sl}_n \otimes t\,\mathbb{C}[t]$.

\begin{defe}[affine flag variety]
	The \emph{affine flag variety} for the group  $\widehat{SL}_n$ is defined as the homogeneous space
	\[
	\mathscr{AF}_n:= \widehat{SL}_n / \mathfrak{B}.
	\]
\end{defe}
Let $P_i\subset \widehat{SL}_n, i=1,\ldots,n$ be the maximal parabolic subgroups. Then the \emph{affine Grassmannians} are defined as the quotients $\widehat{SL}_n/P_i$.  One has the natural embedding of the affine flag variety into the product of affine Grassmannians $\mathscr{AF}_n\hookrightarrow \prod_{i=0}^{n-1} \widehat{SL}_n/P_i$.

\begin{rema}Let $W_n$ denote the Weyl group of $\widehat{\mathfrak{sl}}_n$. For $n=2$, the  Weyl group is generated by simple reflections $s_0$ and $s_1$ subject to $s_0^2=s_1^2=e$. For $n>2$, the group is generated by $s_i$ for $i=0,\ldots,n-1$ subject to the relations
	\begin{align*}
		s^2_i=e,~~i=0,\ldots,n-1, ~~~s_is_j=s_js_i,~|i-j|>1,\\
		s_is_{i+1}s_i=s_{i+1}s_is_{i+1}, ~~i=0,\ldots,n-1
	\end{align*}
	considering $s_n=s_0$. The group $W_n$ can also be realized as the group of bijections $f:\bz\to \bz$ subject to the conditions
	\[f(i+n)=f(i)+n \text{ for all $i$ and }\sum_{i=1}^{n}(f(i)-i)=0.\]
	The torus contained in $\widehat{SL}_n$ acts naturally on $\mathscr{AF}_n$, and the corresponding fixed points are parametrized by the elements of $W_n$, the Weyl group of $\widehat{\mathfrak{sl}}_n$.
\end{rema}

\begin{rema}
	For $w\in W_n$, let $p_w$ be the corresponding torus fixed point and  $X_w:= \overline{\mathfrak{B}\cdot p_w}$ be  the corresponding \emph{affine Schubert variety}. Then  $\mathscr{AF}_n=\bigcup_{w\in W_n}X_w$. Therefore, the affine flag variety is an ind-variety i.e. the inductive limit of finite dimensional projective (algebraic) varieties.
\end{rema}

\subsection{Affine flag variery as the inductive limit of quiver Grassmannians}\label{ss:qGinafg}

Let us recall the embedding of the quiver Grassmannian \( X(k,n,\omega) \) introduced in Remark~\ref{embed-qG-to-Gr}.  
By composing with the Pl\"ucker embeddings of the Grassmannians \( \mathrm{Gr}_{k\omega}(M^i) \), we obtain the map  
\[
X(k,n,\omega)\hookrightarrow \prod_{i=1}^{n}\mathbb{P}\big(\Lambda^{k\omega}(M^i)\big).
\]
Our next goal is to construct an embedding of the quiver Grassmannian into a product of Sato Grassmannians.

\begin{defe}[Sato Grassmannians]\label{defSGr}
	For $i\in\bz$, the Sato Grassmannian $\mbox{SGr}_i$ consists of subspaces $V\subset \bc[t,t^{-1}]$ such that 
	\begin{enumerate}[(a)]
		\item $t^N\bc[t^{-1}]\supset V\supset t^{-N}\bc[t^{-1}]$ for some $N\in\bz_{>0}$,
		\item $\mbox{dim}~V/ t^{-N}\bc[t^{-1}]=i+N$.
	\end{enumerate}\end{defe}
\begin{rema}
Sato Grassmannians can be realized as the inductive limit of finite dimensional Grassmann varieties.
\end{rema}	

For each \( i = 1, \ldots, n \), define a linear map 
\[
\psi^i: M^i \longrightarrow \mathbb{C}[t,t^{-1}]
\]
by
\begin{equation}
	\psi^i(b_{n\omega+1-j}^i) = t^{\,i - k\omega + j}, \qquad j \in [n\omega].
\end{equation}
The image of \( \psi^i \) is therefore spanned by the set \( \{t^{\,i - k\omega + j}\}_{j=1}^{n\omega} \).  
We then define an embedding
\begin{equation}
	\Psi^i: \mathrm{Gr}_{k\omega}(M^i) \hookrightarrow \mathbb{C}[t,t^{-1}]
\end{equation}
by
\begin{equation}
	\Psi^i(V^i) = \psi^i(V^i) \oplus \mathrm{span}\{t^j : j \leq i - k\omega\}.
\end{equation}
It follows from~\cite[Lemma~6.2]{flp24} that 
\[
\Psi^i\big(\mathrm{Gr}_{k\omega}(M^i)\big) \subset \mathrm{SGr}_i.
\]
Using the maps \( \Psi^i \) together with the embedding~\eqref{embed-qG-to-Gr}, we obtain the natural morphism
\begin{equation}
	\Psi: X(k,n,\omega)\hookrightarrow \prod_{i=1}^n \mathrm{SGr}_i,
\end{equation}
where \( \Psi = \big(\prod_{i=1}^n \psi^i\big) \circ \phi \).

We can further extend this embedding to an infinite product by adopting the periodicity conventions 
\( M^{i+n} = M^i \) and \( \mathrm{Gr}_{k\omega}(M^i) = \mathrm{Gr}_{k\omega}(M^{i+n}) \), yielding
\begin{equation}\label{embedpsi}
	\Psi: X(k,n,\omega) \hookrightarrow \prod_{i\in\mathbb{Z}} \mathrm{SGr}_i.
\end{equation}

Recall that the irreducible components 
$X_I(k,n,\omega) \subset X(k,n,\omega)$ 
are indexed by the $k$-element subsets $I \subset [n]$.  
Each irreducible component is the closure of a cell $C_I$ 
containing a unique torus fixed point $p_I$.  
The (generalized) juggling pattern $(J_1, \ldots, J_n)$ 
corresponding to the point $p_I$ is given by
\begin{align*}
J_{i_0}
= \big\{\,1 + (i_0 - i) + n(r - 1) \mid i \in I,~ i \le i_0,~ r \in [\omega]\,\big\}\\
\cup 
\big\{\,1 + (i_0 - i) + nr \mid i \in I,~ i > i_0,~ r \in [\omega]\,\big\}
\end{align*}
for all $i_0 \in [n]$. Furthermore, for $I \in \binom{[n]}{k}$, 
we define the following affine Weyl group element $w(I) \in W_n$ so that $\Psi(p_I) = p_{w(I)}$ (cf. \cite[Corollary 6.8]{flp24})
\[
w(I) : i \mapsto 
\begin{cases}
	i - k\omega, & i \notin I,\\[4pt]
	i - k\omega + n\omega, & i \in I,
\end{cases}
\quad \text{for all } i = 1, \ldots, n,
\]

Finally, we recall the following result from~\cite[Theorem 6.14, Proposition 6.15]{flp24}:
\begin{guess}[\cite{flp24}]\label{theo:afv_in_terms_of_qG}
	For each \( k,n, \omega\in \mathbb{N} \) with \( k\leq n \), the quiver Grassmannian \( X(k,n,\omega) \) is isomorphic to the union of Schubert varieties
	\[
	X(k,n,\omega) \;\cong\; \bigcup_{I \in \binom{[n]}{k}} X_{\omega(I)} \subset \mathscr{AF}_n.
	\]
	Moreover, for all integers \( k \) and \( n \), the embedding~\eqref{embedpsi} satisfies the inclusions
	\[
	\Psi\big(X(k,n,\omega)\big) \subset \Psi\big(X(k,n,\omega+1)\big).
	\]
	Finally, the ascending union stabilizes to the affine flag variety:
	\begin{equation}\label{eq:afv_in_terms_of_qG}
	\bigcup_{\omega \ge 1} \Psi\big(X(k,n,\omega)\big) \;=\; \mathscr{AF}_n.
	\end{equation}
\end{guess}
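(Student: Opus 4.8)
The plan is to prove the three assertions in turn, reducing each to the combinatorics of the affine Weyl group $W_n$ together with the Bia\l ynicki--Birula (BB) cell structure recalled above. Throughout I write $w_\omega(I)$ for the element $w(I)\in W_n$ attached to $I\in\binom{[n]}{k}$, recording its dependence on $\omega$.

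\emph{The isomorphism with a union of Schubert varieties.} First I would verify that $\Psi$ is a $T$-equivariant closed embedding onto its image and that this image lies in $\mathscr{AF}_n$: the affine flag variety sits inside $\prod_{i\in\bz}\mathrm{SGr}_i$ as the closed locus where successive subspaces satisfy the incidence relations $t\cdot V^i\subseteq V^{i+1}$, which under $\Psi$ correspond exactly to the subrepresentation condition $M_a(N^i)\subseteq N^{i+1}$ (the arrow map shifts the basis index, i.e.\ acts as multiplication by $t$), itself mirroring the juggling condition $J_i+1\subset J_{i+1}$. Next I would invoke the BB decomposition \cite[Theorem 2.11]{flp24}: $X(k,n,\omega)$ is the union of $T$-stable cells $C_I$, each containing a single fixed point $p_I$, with $\overline{C_I}=X_I(k,n,\omega)$. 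The crucial step is to show that $\Psi$ carries each cell $C_I$ isomorphically onto the Iwahori orbit $\mathfrak{B}\cdot p_{w_\omega(I)}$. Since $\Psi(p_I)=p_{w_\omega(I)}$ and the torus $T$ sits inside $\mathfrak{B}$ acting compatibly, both $\Psi(C_I)$ and $\mathfrak{B}\cdot p_{w_\omega(I)}$ are irreducible, $T$-stable, and attract to the same fixed point; the dimension equality $\dim C_I=\ell(w_\omega(I))$ then forces them to coincide. Passing to closures yields $\Psi(X_I)=\overline{\mathfrak{B}\cdot p_{w_\omega(I)}}=X_{w_\omega(I)}$, and since the $X_I$ cover $X(k,n,\omega)$ we obtain $\Psi(X(k,n,\omega))=\bigcup_I X_{w_\omega(I)}$.

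\emph{The ascending inclusions.} By the first part, both $\Psi(X(k,n,\omega))$ and $\Psi(X(k,n,\omega+1))$ are finite unions of Schubert varieties, so it suffices to show each $X_{w_\omega(I)}$ lies in $\bigcup_J X_{w_{\omega+1}(J)}$. The cleanest route is the Bruhat comparison $w_\omega(I)\le w_{\omega+1}(I)$, proved directly from the window notation: writing $w_\omega(I)$ and $w_{\omega+1}(I)$ as bijections of $\bz$ via the defining recipe and checking the rank inequalities characterizing the Bruhat order on affine permutations, one sees that increasing the parameter only lengthens the element along the $I$-direction. Then $X_{w_\omega(I)}\subseteq X_{w_{\omega+1}(I)}$ and the desired inclusion follows.

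\emph{Stabilization to $\mathscr{AF}_n$.} The previous step makes $\{\Psi(X(k,n,\omega))\}_{\omega\ge1}$ an ascending chain of closed subvarieties of $\mathscr{AF}_n$. Since $\mathscr{AF}_n=\bigcup_{w\in W_n}X_w$ is a union of finite-dimensional Schubert varieties, equality in \eqref{eq:afv_in_terms_of_qG} reduces to cofinality: every $w\in W_n$ must satisfy $w\le w_\omega(I)$ for some $\omega$ and some $I\in\binom{[n]}{k}$. I would establish this by showing $\ell(w_\omega(I))\to\infty$ as $\omega\to\infty$ and that, for a suitably chosen fixed $I$, the affine permutation $w_\omega(I)$ dominates any prescribed $w$ in Bruhat order once $\omega$ is large, again through the window description and the rank inequalities.

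\emph{Main obstacle.} The delicate point is the first part, namely matching each BB cell $C_I$ precisely with the Iwahori orbit $\mathfrak{B}\cdot p_{w_\omega(I)}$. This requires reconciling the $\bc^*$/$T$-action defining the cells with the Iwahori action defining the Schubert cells, and in particular verifying $\dim C_I=\ell(w_\omega(I))$. The combinatorial inputs for the remaining two parts are then comparatively routine Bruhat-order checks on affine permutations in window notation.
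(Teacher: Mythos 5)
First, a point of comparison: the paper itself contains no proof of Theorem~\ref{theo:afv_in_terms_of_qG} --- it is quoted wholesale from \cite[Theorem~6.14, Proposition~6.15]{flp24}. So your proposal has to be measured against the argument in that reference, whose skeleton (closed embedding into $\mathscr{AF}_n$, matching of BB cells with Iwahori orbits, Bruhat combinatorics for the inclusions and the exhaustion) your plan correctly reproduces.

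The genuine gap is at the load-bearing step of your first part, and the justification you give there is logically invalid as stated. You claim that $\Psi(C_I)$ and $\mathfrak{B}\cdot p_{w_\omega(I)}$ must coincide because both are irreducible, $T$-stable, attract to the same fixed point, and have equal dimension. That inference is false in general: already for a linear torus action on $\mathbb{P}^2$ with sink fixed point $e_0$, the two invariant coordinate lines through $e_0$ (with their second fixed point removed) are irreducible, $T$-stable, one-dimensional, locally closed, and flow entirely to $e_0$, yet they are distinct. What makes the identification work is a \emph{prior containment} $\Psi(C_I)\subseteq \mathfrak{B}\cdot p_{w_\omega(I)}$; only then do irreducibility, closedness of $\Psi(C_I)$ in the Schubert cell (which uses that $\Psi$ is a closed embedding), and the dimension equality force equality, since an irreducible affine variety has no proper closed irreducible subvariety of full dimension. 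Producing that containment is exactly what you label ``the main obstacle'' and then never address: one needs either that the Schubert cells are the attracting sets of a generic cocharacter of $\mathscr{AF}_n$ which $\Psi$ intertwines with the cocharacter defining the BB cells of $X(k,n,\omega)$, or, as in \cite{flp24} (building on \cite{p22}), that the cells of the quiver Grassmannian are orbits of a group whose action $\Psi$ intertwines with the $\mathfrak{B}$-action. On top of this, the dimension equality $\dim C_I=\ell(w_\omega(I))\ (=\omega k(n-k))$ on which your count relies is itself a nontrivial computation with affine permutations that the proposal asserts but never performs. In short, the two facts your argument consumes are precisely the two facts that constitute the cited proof.

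Two further cautions. Since parts two and three are reduced to part one, they inherit the gap; and the cofinality claim in part three is not ``comparatively routine'': the elements $w_\omega(I)$ are very special affine permutations (for $n\ge 3$ they are far from regular dominant translations), and ``$\ell(w_\omega(I))\to\infty$'' does not by itself yield Bruhat cofinality, so an actual rank-inequality argument, with $I$ allowed to depend on the target $w$, is required. Finally, a small but telling slip in your first step: with the paper's embedding \eqref{embedpsi} one computes $\psi^{a+1}\circ M_a=\psi^a$ on the relevant basis vectors, so the arrow maps become plain inclusions $V^a\subseteq V^{a+1}$ in the Sato Grassmannians (the $t$-twist appears only through the periodicity convention), not ``multiplication by $t$'' as you state; the incidence conditions cutting out $\mathscr{AF}_n$ therefore need to be set up more carefully before the closed-embedding claim can be checked.
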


\begin{rema}\label{qg_with_af_pts}
	For $n=2$, the Weyl group elements are of the form $s_0s_1s_0\cdots$ and $s_1s_0s_1\cdots$.  In particular, for any $\omega>0$, there exists exactly two elements $\sigma_1(\omega),~ \sigma_2(\omega)$ of length $\omega$. Further, we have the following 
	\[\Psi(X(1,2,\omega))=X_{\sigma_1(\omega)}\cup X_{\sigma_2(\omega)}\subset \mathscr{AF}_2.\]
\end{rema}

\subsection{GKM varieties and their equivariant cohomology}\label{ss:gkm}
Let $X$ be a projective algebraic variety equipped with an action of an algebraic torus 
\( T := (\mathbb{C}^*)^n \).  
Assume that this action has the property that the set of its $0$- and $1$-dimensional orbits 
has the structure of a graph, and that the odd-dimensional rational cohomology of $X$ vanishes.  
A pair $(X,T)$ satisfying these conditions is called a \emph{GKM variety}.  
This class of varieties was introduced by Goresky, Kottwitz, and MacPherson in~\cite{gkm},  and the term \emph{GKM} derives from their initials. 

Examples of GKM varieties are toric varieties, finite dimensional Schubert varieties of flag varieties of Kac--Moody group and rationally smooth embeddings of reductive groups.

\begin{defe}
Let $(X,T)$ be a GKM variety and let $\chi\in X_*(T)$ be a generic cocharacter (i.e. $X^T=X^{\chi(\bc^*)}$). One can associate a \emph{moment graph} $\mathcal{G}=\mathcal{G}(X,T,\chi)$,
\begin{enumerate}[(i)]
	\item  with vertices $\cv(\cg)=X^T$,
	\item for $x,y\in X^T$, there is an oriented edge $e=(x\to y)\in\ce(\cg)$ if and only if there exists an 1-dimensional $T$-orbit $\mathcal{O}$ such that $\overline{\mathcal{O}}=\mathcal{O}\cup \{x,y\}$ and $\mbox{lim}_{ \lambda\to 0}\chi(\lambda)\cdot q=x$ for $q\in \mathcal{O}$.
	\item each edge $e=(x\to y)\in\ce(\cg)$ is labelled by a character $\alpha(e)\in X^*(T)$ which describes the $T$-action on $\mathcal{O}$.
\end{enumerate}  
\end{defe}

One of the main goals in considering moment graphs of a GKM variety is to describe its equivariant cohomology ring with rational coefficients, 
\( H_T^*(X) \), through the combinatorial properties of the moment graph \cite[Theorem 1.2.2]{gkm}.  These techniques have been found effective to the study of equivariant cohomology of Schubert varieties in Kac--Moody flag varieties, Hessenberg varieties, standard group embeddings and more. 

Let the equivariant cohomology ring of a point be identified with the polynomial ring generated by degree $2$ elements $\tau_1, \ldots, \tau_n$:
\begin{equation}
	\bq[T]:=	H_T^*(\{\mathrm{pt}\}) \simeq \mathbb{Q}[\tau_1, \ldots, \tau_n].
\end{equation}
Here, each generator $\tau_j$ for $j = 1, \ldots, n$ may be regarded as the $\mathbb{Z}$-basis element  of the character lattice $X^*(T)$.  
By abuse of notation, we also denote by $\tau$ its image $\tau \otimes 1$ in the $\mathbb{Q}$-vector space $X^*(T) \otimes_{\mathbb{Z}} \mathbb{Q}$.

\begin{guess}[\cite{gkm}]
	Let $(X,T)$ be a GKM variety with moment graph $\cg=\cg(X,T,\chi)$. Then 
\begin{equation}\label{def:eqcohom}
	 H_T^*(X)\cong\Bigg\{f:\cv(\cg)\to \bq[T]~\Bigg|~ f(x)-f(y)\equiv 0~ \mbox{mod} ~\alpha(e), \forall e:x\to y\in\ce(\cg)\Bigg\}.             
\end{equation}
\end{guess}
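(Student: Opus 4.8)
The plan is to realize $H_T^*(X)$ as a full-rank $\bq[T]$-submodule of $H_T^*(X^T) = \bigoplus_{x\in X^T}\bq[T]$ through the restriction-to-fixed-points map $\iota^*$, and then to pin down its image as exactly the subring cut out by the edge congruences. The essential input is \emph{equivariant formality}: since the odd rational cohomology of $X$ vanishes, the Serre spectral sequence of the Borel fibration $X\hookrightarrow ET\times_T X\to BT$ degenerates at its second page, so $H_T^*(X)$ is a free $\bq[T]$-module and the localization map $\iota^*\colon H_T^*(X)\to H_T^*(X^T)$ is injective. First I would record this, together with the rank count $\mathrm{rank}_{\bq[T]}H_T^*(X)=\dim_\bq H^*(X)=|X^T|$: the last equality follows because the Atiyah--Bott localization formula gives $\chi(X)=|X^T|$, and the absence of odd cohomology turns this Euler characteristic into $\sum_i\dim_\bq H^{2i}(X)$. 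Hence $\im\iota^*$ is a submodule of $\bigoplus_{x}\bq[T]$ of full rank $|X^T|$.

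Next I would verify that $\im\iota^*$ lands inside the combinatorial ring on the right-hand side, i.e.\ the necessity of the congruences. For each $1$-dimensional orbit $\mathcal{O}$ with $\overline{\mathcal{O}}\cong\bp^1$ and the two fixed points $x,y\in\overline{\mathcal{O}}$, functoriality of equivariant cohomology yields a commutative square relating restriction to $\overline{\mathcal{O}}$ and restriction to $\{x,y\}$. An explicit computation of $H_T^*(\bp^1)$ for the weight governing the $T$-action on $\mathcal{O}$ shows that the image of $H_T^*(\overline{\mathcal{O}})\to\bq[T]\oplus\bq[T]$ is precisely $\{(a,b):a\equiv b \bmod \alpha(e)\}$. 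Composing, every class $\xi\in H_T^*(X)$ satisfies $\iota^*(\xi)(x)\equiv\iota^*(\xi)(y)\bmod\alpha(e)$ for each edge $e\colon x\to y$, so $\im\iota^*$ is contained in the right-hand ring.

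The main obstacle is the reverse inclusion: showing that $\iota^*$ surjects onto the full GKM ring rather than merely onto a finite-index submodule, since equal rank alone does not force equality of modules. Here I would exploit the generic cocharacter $\chi$ to order the fixed points $x_1,\ldots,x_m$ compatibly with the flow, producing a filtration of $X$ by closed $T$-stable subvarieties $X_{\le 1}\subset\cdots\subset X_{\le m}=X$ whose successive differences are the affine Bia\l ynicki--Birula cells. Equivariant formality forces the long exact sequences of the pairs $(X_{\le j},X_{\le j-1})$ to split into short exact sequences, yielding a $\bq[T]$-basis of $H_T^*(X)$ whose restrictions to $X^T$ are upper triangular with respect to the ordering: the class attached to $x_j$ restricts at $x_j$ to the product of the relevant tangent weights and vanishes at all earlier fixed points. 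The right-hand ring carries a parallel upper-triangular generating set, and matching the two triangular systems shows that the images coincide, which upgrades the inclusion of the previous paragraph to an isomorphism.

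This argument is exactly the content of \cite[Theorem 1.2.2]{gkm}. For the varieties considered in this paper the standing hypotheses --- finitely many $T$-fixed points and $1$-dimensional orbits, and vanishing odd rational cohomology --- are guaranteed by the cellular Bia\l ynicki--Birula decomposition recorded above, so the theorem applies directly.
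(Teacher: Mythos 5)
You should note at the outset that the paper does not prove this statement at all: it is quoted directly from \cite{gkm} (Theorem 1.2.2 there), so there is no internal proof to compare against line by line. Your sketch is a correct reconstruction of the standard argument, and its three pillars are the right ones: equivariant formality (vanishing odd cohomology forces degeneration of the Serre spectral sequence) gives freeness of $H_T^*(X)$ over $\bq[T]$ and injectivity of the restriction $\iota^*$ to the fixed locus; the computation of $H_T^*(\overline{\mathcal{O}})$ for each one-dimensional orbit closure $\overline{\mathcal{O}}\cong\bp^1$ shows that $\im\iota^*$ satisfies the congruences in \eqref{def:eqcohom}; and a triangularity argument upgrades the inclusion to surjectivity.

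Two caveats are worth recording. First, your surjectivity step is not the one in \cite{gkm}: there, surjectivity comes from the Chang--Skjelbred lemma, which identifies $\im\iota^*$ with the image of $H_T^*(X_1)\to H_T^*(X^T)$ for $X_1$ the equivariant one-skeleton, and which applies to \emph{any} equivariantly formal space. Your route instead requires a cellular, filterable Bia\l ynicki--Birula decomposition, an extra hypothesis beyond the GKM conditions in the statement; it holds for the varieties $X(k,n,\omega)$ of this paper (this is exactly the \emph{BB-filterable} setting of \cite{lp231} and the Schubert-variety setting of \cite{t081}, which the paper invokes for Knutson--Tao classes), but it does not establish the theorem in the stated generality. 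Second, the phrase ``matching the two triangular systems'' conceals the key divisibility argument: given $f$ in the GKM ring vanishing at all fixed points preceding $x_j$, the congruences force each downward edge weight $\alpha(e)$ at $x_j$ to divide $f(x_j)$; since there are only finitely many one-dimensional orbits, these weights are pairwise non-proportional linear forms, so their product --- which is the value at $x_j$ of your cell class $\sigma_j$ --- divides $f(x_j)$, and one subtracts $\bigl(f(x_j)/\sigma_j(x_j)\bigr)\sigma_j$ and inducts along the order. This is precisely the induction the paper carries out later, in its own specific moment graph, in Lemma~\ref{Lem:strconsts} and Proposition~\ref{ktclassformsbasis}; so your proposal is in fact closer in spirit to the paper's downstream machinery than to the original argument of \cite{gkm}.
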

The equation $f(x)-f(y)\equiv 0~ \mbox{mod} ~\alpha(e)$ in \eqref{def:eqcohom} is often referred to as a congruence relation. Note that $H^*_T(X)$ has a graded $\bq[T]$-algebra structure induced by 
the injective homomorphism $	\vartheta: \bq[T] \longrightarrow H^*_T(X)$ such that the image of $\tau\in \bq[T] $ (i.e., $\vartheta(\tau): \cv(\cg) \to \bq[T]$) is defined by the function
\begin{equation}\label{def:qt_elements_as_map}
\vartheta(\tau)(x) = \tau \quad \text{for all } x\in \cv(\cg).
\end{equation}

\begin{defe}[Knutson--Tao class]\label{ktclass}
	Let $(X,T)$ be a GKM variety with moment graph $\mathcal{G} := \mathcal{G}(X,T,\chi)$.  
	A \emph{Knutson--Tao class} for a fixed point $x \in \mathcal{V(G)} = X^T$ is an equivariant class 
	\( p_x = (p_x^y)_{y \in X^T} \)
	satisfying the following conditions:
	\begin{enumerate}[(i)]
		\item \( p_x^x = \displaystyle\prod_{e \in \mathcal{E}_x} \alpha(e) \), where $\ce_x\subset \ce$ is the collection of edges with initial vertex $x$. 
		\item for every  \( y \in \mathcal{V(G)} \), the component \( p_x^y \) is a homogeneous polynomial of the same degree in \( \mathbb{Q}[T] \);
	 \item $p_x^y = 0 $ if $x$ can not be reached from $y$ via an oriented path on the graph.
	\end{enumerate}
\end{defe}
The classes in the above definition are named after Knutson and Tao  \cite{kt}, who first introduced classes of this form to construct a basis for the equivariant cohomology of Grassmannians.
\begin{rema}
It is not true in general that Knutson-Tao classes exist.  However, their existence is proved for several smooth GKM varieties in \cite{gz03}, Schubert varieties in \cite[Section 3]{t081}, and BB-filterable GKM varieties in \cite[Theorem 3.9]{lp231}.
\end{rema}

\begin{propo}\label{ktclassformsbasis}(\cite[Proposition 2.13]{t081})
	Let $(X,T)$ be a GKM variety, and let $\chi$ be a generic cocharacter such that the associated moment graph 
	$\mathcal{G}(X,T,\chi)$ is acyclic.  
	If for every fixed point $x \in X^T$ there exists a Knutson--Tao class 
	\( p_x \in H_T^*(X) \),  
	then the collection \( \{\,p_x \mid x \in X^T\,\} \) forms a 
	\( \mathbb{Q}[T] \)-basis of \( H_T^*(X) \).
\end{propo}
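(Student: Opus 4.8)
The plan is to package the classes $\{p_x\}_{x\in X^T}$ into a matrix over $\bq[T]$ that is triangular with respect to the reachability order induced by the orientation of the moment graph, to read off linear independence from the nonvanishing of its diagonal, and then to establish spanning by an induction on the support in which the congruence relations of \eqref{def:eqcohom} supply exactly the divisibility needed to peel off one class at a time. First I would record the combinatorial setup. Since $\cg:=\cg(X,T,\chi)$ is acyclic, reachability is a genuine partial order: write $y\preceq x$ when $x$ is reachable from $y$ along an oriented path (and $x\preceq x$). Fixing a linear extension $v_1,\ldots,v_m$ of $X^T$, so that $v_i\preceq v_j$ forces $i\le j$, condition (iii) of Definition~\ref{ktclass} gives $p_{v_j}^{v_i}=0$ unless $i\le j$, i.e.\ the matrix $P:=(p_{v_j}^{v_i})_{i,j}$ is upper triangular, while condition (i) identifies its diagonal entries $p_{v_i}^{v_i}=\prod_{e\in\ce_{v_i}}\alpha(e)$, each a nonzero element of the integral domain $\bq[T]$.

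Linear independence is then immediate. Suppose $\sum_x c_x p_x=0$ with $c_x\in\bq[T]$ not all zero, and let $v_i$ be the largest index with $c_{v_i}\neq 0$. Evaluating this relation at the vertex $v_i$ and using that $p_x^{v_i}\neq 0$ forces $v_i\preceq x$, hence the index of $x$ is $\ge i$, only the term $x=v_i$ survives, yielding $c_{v_i}\,p_{v_i}^{v_i}=0$. As $\bq[T]$ is a domain and $p_{v_i}^{v_i}\neq 0$, this forces $c_{v_i}=0$, a contradiction.

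For spanning, given $f\in H_T^*(X)$ I would argue by descending induction on the largest index $i$ with $f^{v_i}\neq 0$ (the base case being $f=0$). The key claim is that $p_{v_i}^{v_i}$ divides $f^{v_i}$ in $\bq[T]$. Indeed, for every outgoing edge $e:v_i\to w$ we have $v_i\prec w$, so $w$ lies strictly above the top of the support of $f$ and $f^w=0$; the congruence relation then forces $\alpha(e)\mid f^{v_i}-f^w=f^{v_i}$. Because the GKM hypothesis makes the weights $\{\alpha(e):e\in\ce_{v_i}\}$ pairwise linearly independent, hence pairwise coprime in the polynomial ring $\bq[T]$, their product $p_{v_i}^{v_i}$ divides $f^{v_i}$. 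Setting $c:=f^{v_i}/p_{v_i}^{v_i}\in\bq[T]$ and $f':=f-c\,p_{v_i}$, the top entry is killed, $(f')^{v_i}=0$, and no support is introduced above it since $p_{v_i}^{w}=0$ for $w\succ v_i$; thus $f'$ has strictly smaller top index, the induction closes, and $f$ is recovered as a $\bq[T]$-combination of the $p_x$.

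The step I expect to be the main obstacle is precisely this divisibility, as it is where both defining features of the situation are used simultaneously: the congruence relations of the GKM description give divisibility by each individual $\alpha(e)$, and the pairwise non-proportionality of the edge weights at $v_i$ upgrades this to divisibility by the full product $\prod_{e\in\ce_{v_i}}\alpha(e)=p_{v_i}^{v_i}$. Once this is in place the triangular bookkeeping is routine, and if a graded conclusion is desired one can run the entire argument degree by degree, with the homogeneity of condition (ii) ensuring that each quotient $c=f^{v_i}/p_{v_i}^{v_i}$ is itself homogeneous.
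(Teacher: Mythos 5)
Your proof is correct and takes essentially the same route as the paper: the paper itself only cites Tymoczko \cite[Proposition 2.13]{t081} for this statement, but the triangularity-plus-peeling induction you give (divisibility of the top component by each outgoing edge label via the congruence relations, hence by their product, then subtracting a multiple of the corresponding Knutson--Tao class) is exactly the argument the paper carries out concretely in the rank-one case in Lemma~\ref{Lem:strconsts} and Remark~\ref{ktclass_indep}, and it is also the cited proof's strategy. The one hypothesis you invoke that the paper's definition of a GKM variety does not state explicitly---pairwise linear independence of the weights $\alpha(e)$ at each fixed vertex, needed to pass from divisibility by each $\alpha(e)$ to divisibility by $p_{v_i}^{v_i}$---is part of the standard moment-graph axioms assumed in the cited result and holds visibly in the paper's setting, so flagging it as you did is appropriate rather than a gap.
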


\begin{rema}
	For (genaralised) flag varieties, the Knutson--Tao classes are equivariant Schubert classes and the partial order $\succeq$ is the Bruhat order \cite[Proposition 4.6 and Proposition 4.7]{t081}.
\end{rema}

\subsection{Moment graph of cyclic quiver Grassmannian }\label{ss:mg_of_cqG} 

We use the following parametrization of the $T$-fixed points to describe the structure of the one dimensional $T$-orbits.
\begin{defe}
	For $k,n,\omega\in\bn$, with $k\leq n$, we define 
	\begin{equation}
		\mathcal{V}_{k,n,\omega}:=\Big\{(l_j)_{j\in\bz_n}\in [0,\omega n]^{\bz_n}: \mathbf{dim}~\bigoplus_{j\in\bz_n}U_j(l_j)=(k\omega,\ldots,k\omega)\in \bn^{\bz_n}\Big\}.
	\end{equation}
\end{defe}
\begin{propo}\label{bij:vtojug}(\cite[Proposition 3.2]{flp24})
	For $k,n,\omega\in\bn$, with $k\leq n$, there is a bijection between $\mathscr{J}ug(k,n,\omega)$ and $	\mathcal{V}_{k,n,\omega}$.
\end{propo}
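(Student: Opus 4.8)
The plan is to identify both $\mathscr{J}ug(k,n,\omega)$ and $\mathcal{V}_{k,n,\omega}$ with a single object --- the set of coordinate (equivalently $T$-fixed) subrepresentations $N\subseteq U_{\omega n}$ of dimension vector $(k\omega,\ldots,k\omega)$ --- and then to transport one encoding into the other explicitly. First I would record the string decomposition of the ambient representation. Applying the arrow maps repeatedly to $b^i_1$ produces a single chain $b^i_1\mapsto b^{i+1}_2\mapsto\cdots\mapsto b^{i-1}_{\omega n}\mapsto 0$, so $U_{\omega n}=\bigoplus_{i\in\bz_n}S_i$ is a direct sum of $n$ uniserial (string) submodules, each of length $\omega n$, with $S_i\cong U_i(\omega n)$, and every standard basis vector $b^i_j$ lies in exactly one summand, namely $S_{i-j+1}$. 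Since each arrow map sends the position-$p$ vector of a chain to its position-$(p+1)$ vector and annihilates the bottom, any submodule of $S_i$ that meets the chain at one position contains the entire tail below it; hence the submodules of $S_i$ are exactly the bottom segments, one of each length $l\in[0,\omega n]$, and the length-$l$ segment is the indecomposable recorded as $U_i(l)$.

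Next I would translate a juggling pattern into a subrepresentation. Given $(J_i)_{i\in\bz_n}\in\mathscr{J}ug(k,n,\omega)$, set $N^i:=\mathrm{span}\{b^i_s : s\in J_i\}$. The condition $J_i+1\subset J_{i+1}$ is precisely the requirement $M_a(N^i)\subseteq N^{i+1}$, because the arrow raises the second index by one and kills $b^i_{\omega n}$; thus $N=(N^i)_{i\in\bz_n}$ is a subrepresentation, while $|J_i|=k\omega$ says $\mathbf{dim}~N=(k\omega,\ldots,k\omega)$. Conversely every coordinate subrepresentation of this dimension vector arises this way, which is exactly the correspondence behind Proposition~\ref{bij:fixedpttojug}. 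Because $N$ is spanned by standard basis vectors and each such vector lies in a unique $S_i$, one obtains a vector-space splitting $N=\bigoplus_{i\in\bz_n}(N\cap S_i)$; and since each arrow map preserves every $S_i$, each summand $N\cap S_i$ is itself a subrepresentation of $S_i$, hence the bottom segment $U_i(l_i)$ for a unique $l_i\in[0,\omega n]$.

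Finally I would read off the bijection. The map $(J_i)\mapsto(l_i)$, with $l_i:=\dim(N\cap S_i)$, lands in $\mathcal{V}_{k,n,\omega}$ since $\mathbf{dim}~\bigoplus_{i\in\bz_n}U_i(l_i)=\mathbf{dim}~N=(k\omega,\ldots,k\omega)$. Its inverse sends $(l_i)\in\mathcal{V}_{k,n,\omega}$ to the coordinate subrepresentation $\bigoplus_{i\in\bz_n}U_i(l_i)$, which is automatically $T$-stable and of the prescribed dimension vector, and hence returns a juggling pattern through the index sets of its graded pieces; the two assignments are manifestly mutually inverse (up to the cyclic relabeling that matches the top/socle convention used for $U_j(\cdot)$). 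Concretely one checks $b^i_s\in N$ if and only if $s>\omega n-l_{\,i-s+1}$, which yields the explicit formula for $J_i$ in terms of $(l_j)$ and conversely.

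I expect the crux to be the structural step of the second paragraph: verifying that a $T$-fixed coordinate subrepresentation splits as a direct sum compatibly with the string decomposition of $U_{\omega n}$, and that each resulting summand is forced to be a bottom segment of its string. This rests squarely on the nilpotency and the explicit index-shift form of the arrow maps; once it is established, the dimension bookkeeping matching $|J_i|=k\omega$ with $\mathbf{dim}~\bigoplus_{i\in\bz_n}U_i(l_i)=(k\omega,\ldots,k\omega)$ is routine.
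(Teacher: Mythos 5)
The paper offers no argument for this proposition at all: it is imported verbatim from \cite[Proposition 3.2]{flp24}, so there is no in-paper proof to compare against. Your proposal is, in substance, a correct reconstruction of the argument in the cited source: juggling patterns are exactly the coordinate subrepresentations of dimension vector $(k\omega,\ldots,k\omega)$ (the condition $J_i+1\subset J_{i+1}$ is precisely stability under the index-shifting arrow maps, which kill $b^i_{\omega n}$); the ambient representation splits into $n$ strings $S_i$ generated by the vectors $b^i_1$; a coordinate subrepresentation $N$ splits as $\bigoplus_{i\in\bz_n}(N\cap S_i)$ because each basis vector lies in a unique string; and each $N\cap S_i$, being spanned by chain vectors and closed under the nilpotent arrows, is forced to be a bottom segment, whose length is the desired integer $l_i$. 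All of this is sound, and the dimension bookkeeping is as routine as you say.

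The one place that needs more than the parenthetical ``up to the cyclic relabeling'' is the identification of the length-$l_i$ tail of $S_i$ with the symbol $U_i(l_i)$. The present paper never defines $U_j(l)$, but in the convention of \cite{lp23,flp24} the module $U_j(l)$ is generated (has its top) at vertex $j$; under that convention the tail of $S_i$ of length $l_i$ is isomorphic to $U_{i-l_i}(l_i)$, so the required relabeling $i\mapsto i-l_i$ depends on the tuple and is not a fixed cyclic shift. For your map to land in $\mathcal{V}_{k,n,\omega}$ as defined, and for the inverse map to be well defined, you must check that $i\mapsto i-l_i$ is injective on $\{i: l_i>0\}$, i.e.\ that distinct nonzero tails have distinct top vertices (equivalently, that distinct nonzero summands of $\bigoplus_j U_j(l_j)$ have socles lying in distinct strings). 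This does follow from the constant dimension vector by a one-line count: since the arrow $M_{v-1}$ is injective on the span of the $b^{v-1}_j$ with $j<\omega n$, the number of summands of $N$ with top at vertex $v$ equals $\dim N^v-\dim M_{v-1}(N^{v-1})$, which for a coordinate $N$ with $\dim N^v\equiv k\omega$ equals $1$ if $b^{v-1}_{\omega n}\in N$ and $0$ otherwise, hence is at most one. So the gap is small and fixable, but it is a genuine step rather than a choice of convention, and your write-up should include it (or else adopt the socle-indexed convention explicitly, under which the relabeling really is the fixed shift $i\mapsto i-1$ and your argument closes as stated).
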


For every element $l_\bullet \in \mathcal{V}_{k,n,\omega}$, and for any 
$r \in [0, \min\{l_i,\, \omega n - l_j\}]$ satisfying  $i - l_i \equiv j - l_j - r \pmod{n},$
we define maps 
\[
f_{i,j,r} : \mathcal{V}_{k,n,\omega} \to \mathcal{V}_{k,n,\omega}
\]
by
\[
\big(f_{i,j,r}(l_\bullet)\big)_s :=
\begin{cases}
	l_s, & s \notin \{i,j\},\\[4pt]
	l_i - r, & s = i,\\[4pt]
	l_j + r, & s = j.
\end{cases}
\]
It is straightforward to verify that $f_{i,j,r}(l_\bullet)$ again belongs to $\mathcal{V}_{k,n,\omega}$.
The vertices of the moment graph for the torus action of $T$ on $X(k,n,\omega)$ 
are labelled by the elements of $\mathcal{V}_{k,n,\omega}$ 
(see Propositions~\ref{bij:fixedpttojug} and~\ref{bij:vtojug}).
Moreover, there is an oriented edge in the moment graph from 
$l_\bullet$ to $f_{i,j,r}(l_\bullet)$ if and only if $l_i > l_j + r$ \cite{flp24}.

For $p, p'\in X(k,n,\omega)^T$ we write $p'\preceq 
p$ if $\overline{C_p}$ contains $p'$.   Given two elements $l_\bullet, l_\bullet'\in \cv_{k,n,\omega}$ we write $l_\bullet \geq_\cv l_\bullet'$ if there exists an oriented path from $l_\bullet $ to $l_\bullet'$ in the moment graph for the $T$-action on $X(k,n,\omega)$.  For $J_\bullet, J_\bullet'\in \mathscr{J}ug(k,n,\omega)$ we write $J_\bullet\geq J_\bullet'$ if and only if $j_r^{(i)}\geq j_r'^{(i)}$ for all $i\in \bz_n$ and $r\in [k\omega]$ where we order each $J_i\in \binom{[n\omega]}{k\omega}$ as  \[\big(j_1^{(i)}<j_2^{(i)}<\cdots<j_{k\omega}^{(i)}\big).\]

\begin{guess}(\cite[Theorem 4.6]{flp24})
	For $k,n,\omega \in\bn$ with $k\leq n$, there are order preserving poset isomorphisms between $\cv_{k,n,\omega}$, $\mathscr{J}ug(k,n,\omega)$ and $X(k,n,\omega)^T$.
\end{guess}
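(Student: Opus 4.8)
The plan is to establish the poset isomorphisms by exhibiting explicit order-preserving bijections and then checking that each one, together with its inverse, respects the partial orders defined in the preceding paragraph. Since Propositions~\ref{bij:fixedpttojug} and~\ref{bij:vtojug} already supply set-theoretic bijections among $X(k,n,\omega)^T$, $\mathscr{J}ug(k,n,\omega)$, and $\cv_{k,n,\omega}$, the only remaining content is the compatibility with the orders $\preceq$, $\geq$, and $\geq_\cv$. First I would fix explicit notation for the two composite bijections, say $\Phi:\cv_{k,n,\omega}\to\mathscr{J}ug(k,n,\omega)$ and $\Theta:\mathscr{J}ug(k,n,\omega)\to X(k,n,\omega)^T$, and reduce the problem to proving two equivalences: (a) $l_\bullet\geq_\cv l_\bullet'$ if and only if $\Phi(l_\bullet)\geq\Phi(l_\bullet')$, and (b) $J_\bullet\geq J_\bullet'$ if and only if $\Theta(J_\bullet)\preceq\Theta(J_\bullet')$ (note the order-reversal convention forced by the definition of $\preceq$ via cell closures). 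Transitivity of each relation means it suffices to analyze the covering relations.

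For equivalence~(a), the key step is to translate the generating moves $f_{i,j,r}$ on $\cv_{k,n,\omega}$, which produce the oriented edges of the moment graph, into the coordinatewise comparison $j_r^{(i)}\geq j_r'^{(i)}$ on juggling patterns. Under the bijection of Proposition~\ref{bij:vtojug}, each coordinate $l_j$ records the dimension datum of the subrepresentation, so an elementary move $f_{i,j,r}$ with $l_i>l_j+r$ decreases $l_i$ by $r$ and increases $l_j$ by $r$; I would compute directly how this alters the ordered tuples $\big(j_1^{(i)}<\cdots<j_{k\omega}^{(i)}\big)$ and verify that a single oriented edge lowers the pattern in the coordinatewise order and, conversely, that any coordinatewise descent is realized by a sequence of such moves. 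The forward implication is a bookkeeping computation; the reverse implication — that $\geq$ on juggling patterns is generated by the specific moves $f_{i,j,r}$ — is the step requiring genuine care, since one must show no ``extra'' comparabilities are introduced beyond those coming from moment-graph paths.

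For equivalence~(b), I would invoke the Bia\l ynicki--Birula theory recalled earlier: by Theorem~\cite[Theorem 2.11]{flp24} the BB decomposition is cellular, each cell $C_p$ is $T$-stable with a unique fixed point $p$, and $p'\preceq p$ means $p'\in\overline{C_p}$. The standard fact that the closure order on BB cells agrees with the order generated by the moment-graph edges (for an acyclic, BB-filterable GKM variety, as guaranteed by the hypotheses invoked for Proposition~\ref{ktclassformsbasis}) lets me identify $\preceq$ with $\geq_\cv$ up to the orientation convention. Combining this identification with equivalence~(a) then closes the triangle of isomorphisms.

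The main obstacle I anticipate is the reverse direction of~(a): proving that the combinatorial order $\geq$ on $\mathscr{J}ug(k,n,\omega)$ is \emph{exactly} the reachability order generated by the moves $f_{i,j,r}$, rather than merely implied by it. Concretely, given $J_\bullet\geq J_\bullet'$ with $J_\bullet\neq J_\bullet'$, I must produce at least one admissible move lowering $J_\bullet$ while staying $\geq J_\bullet'$, so that an inductive descent terminates at $J_\bullet'$; guaranteeing admissibility (the constraint $i-l_i\equiv j-l_j-r\pmod n$ together with $l_i>l_j+r$) at each step is where the argument is most delicate, and I would handle it by choosing the move that decreases the largest violated coordinate, checking that the congruence condition is automatically met because both endpoints lie in $\cv_{k,n,\omega}$.
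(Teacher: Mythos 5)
The first thing to note is that the paper does not prove this statement at all: it is quoted verbatim from \cite[Theorem 4.6]{flp24}, so there is no internal proof to compare against, and your plan amounts to reproving that external result from scratch. Judged on its own terms, it contains one outright error and two genuine gaps. The error is the ``order-reversal convention'' you claim in equivalence (b). With the paper's conventions, an edge $x \to y$ of the moment graph means the one-dimensional orbit $\mathcal{O}$ is attracted to $x$ under the cocharacter, hence $\mathcal{O} \subset C_x$ and $y \in \overline{\mathcal{O}} \subset \overline{C_x}$, i.e. $y \preceq x$. So reachability $x \geq_\cv y$ corresponds to $\Theta(x) \succeq \Theta(y)$: the identification is order-\emph{preserving}, exactly as the statement asserts. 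Your (a) and (b) taken together would make the composite map $\cv_{k,n,\omega} \to X(k,n,\omega)^T$ order-reversing, contradicting the very statement you are trying to prove.

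The two gaps are the steps you yourself flag but then dispose of too quickly. In (b), the assertion that the closure order on BB cells agrees with the order generated by the moment-graph edges is not a standard citable fact; it does not follow from \cite[Theorem 2.11]{flp24} (which gives cellularity, $T$-stability, and uniqueness of fixed points in cells), nor from the hypotheses of Proposition \ref{ktclassformsbasis}, which concern the existence of Knutson--Tao classes, not cell closures. Only one direction is immediate: an edge $x \to y$ gives $y \in \overline{C_x}$, and even making reachability transitive already requires knowing that $\overline{C_x}$ is a union of cells. The converse --- that $p' \in \overline{C_p}$ forces an oriented path in the moment graph --- is a substantive geometric claim and is precisely (part of) what \cite[Theorem 4.6]{flp24} establishes; assuming it as known makes the argument circular. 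In (a), your descent argument for the reverse implication rests on the claim that the congruence $i - l_i \equiv j - l_j - r \pmod{n}$ ``is automatically met because both endpoints lie in $\cv_{k,n,\omega}$.'' That is unjustified: admissibility is a condition on the particular pair of coordinates and the particular $r$ chosen at each step, and membership of the two tuples in $\cv_{k,n,\omega}$ gives no such thing. To close the argument you would have to exhibit, for each pair $J_\bullet > J'_\bullet$, explicit indices $i,j$ and an integer $r$ satisfying both the congruence and $l_i > l_j + r$ such that the resulting move stays coordinatewise above $J'_\bullet$; this is the combinatorial heart of the theorem, and it is missing.
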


For $k=1,n=2,\omega=m$, one has a simpler description of the moment graph  $\mathcal{G}_m$ associated to the $T\cong (\bc^*)^2$-action on $X(1,2,m)$:
\begin{description}
	\item[Vertices] The vertices can be identified with $\cv :=\{(m-q,m+q): -m\leq q\leq m\}$.
	\item[Edges] 
	The set of oriented edges consists of 
	\begin{equation}\label{edge:quiver grass}
		\ce:= \Big\{(m-p,m+p)\to (m-l,m+l) :|p|>|l|, ~|p|-|l| \text{ is odd}\Big\}.
		\end{equation}
	 
	 \item[Label on edges] Further, for $-m \leq q \leq m$, we label the vertex $(m - q, m + q)$ with the weight $(2q + 1)\frac{\alpha}{2}+ \frac{q(q + 1)}{2}\delta$  (cf. Remark \ref{label_on_vertices}). The edges are labelled by the difference between the labels of their source and target vertices. 
	 
	   In particular, for  $-m \leq p,l \leq m$ , the label on the edge $(m-p,m+p)\to (m-l,m+l)\in\ce$ \eqref{edge:quiver grass}  is 
	 \begin{equation}\label{label:edge}
	 	(p-l)\alpha+\frac{(p-l)(p+l+1)}{2}\delta
	 \end{equation}
\end{description}

\begin{rema}\label{label_on_vertices}
Consider the affine Kac--Moody Lie algebra $\widehat{\mathfrak{sl}}_{2}$ with Cartan subalgebra $\widehat{\mathfrak{h}}$.  
Let $\{h_0, h_1, d\}$ be a basis of $\widehat{\mathfrak{h}}$, and let $\{\alpha_0, \alpha_1, \gamma\}$ be the corresponding dual basis of $\widehat{\mathfrak{h}}^{*}$.  
For any $\lambda \in \widehat{\mathfrak{h}}^{*}$, the simple reflections $s_i$ $(i=0,1)$ act as
\[
s_i(\lambda) = \lambda - \lambda(h_i)\,\alpha_i.
\]

Let $\omega_0, \omega_1$ denote the fundamental weights, and set $\Lambda = \omega_0 + \omega_1$ to be the highest weight.  
We fix $\alpha := \alpha_0$ and $\delta := \alpha_0 + \alpha_1$, and identify 
\begin{equation}\label{toruspresent}
	\bq[T]\cong\bq[\alpha,\delta].
	\end{equation}  

For each integer $q = 0, 1, \ldots, m$, we associate the vertices $(m - q, m + q)$ and $(m + q, m - q)$ with Weyl group elements of length $q$, defined respectively by (cf. Remark \ref{qg_with_af_pts})
\[
\sigma_1(q) := s_1 s_0 s_1 \cdots, 
\qquad 
\sigma_2(q) := s_0 s_1 s_0 \cdots.
\]

We then assign to the vertex $(m + q, m -q)$ the weight (cf. \cite[Chapter 17]{carter})
\[
(\sigma_1(q)(\Lambda))(h_1)\, \tfrac{\alpha}{2}
+ (\sigma_1(q)(\Lambda))(-d)\, \delta,
\]
and to the vertex $(m - q, m +q)$ the weight
\[
(\sigma_2(q)(\Lambda))(h_1)\, \tfrac{\alpha}{2}
+ (\sigma_2(q)(\Lambda))(-d)\, \delta.
\]
\end{rema}

\begin{exam}
	For $n=2,k=1 $ and $\omega =4 \text{ or } 5$, the quiver Grassmannians 
$X(k,n,\omega)$ has $9$ and $11$ $T$-fixed  points, respectively. The vertices and their labels for the corresponding $T$-actions on them
	 are listed in Table~\ref{lab:KT_X124} and Table~\ref{lab:KT_X125}, while the associated moment graphs are shown in Figure~\ref{qrep_omega4_fig} and Figure~\ref{qrep_omega5_fig}, respectively.
	
		\begin{table}[H]
		\centering
		\scalebox{1}{
			\begin{tabular}{|c|c|c|c|c|c|c|c|c|}
				\hline
				(8,0) &(7,1)&(6,2)&(5,3)&(4,4)&(3,5)&(2,6)&(1,7)&(0,8)\\ \hline
				$-\frac{7}{2}\alpha+6\delta$&	$-\frac{5}{2}\alpha+3\delta$ &	$-\frac{3}{2}\alpha+\delta$ &	$-\frac{1}{2}\alpha$ &	$\frac{1}{2}\alpha$&$\frac{3}{2}\alpha+\delta$&$\frac{5}{2}\alpha+3\delta$ &$\frac{7}{2}\alpha+6\delta$&$\frac{9}{2}\alpha+10\delta$\\
				\hline
			\end{tabular}
		}
		\caption{vertices and labels on them for the $T$-action on $X(1,2,4)$.}
		\label{lab:KT_X124}
	\end{table}
	\begin{table}[H]
		\centering
		\scalebox{0.9}{
			\begin{tabular}{|c|c|c|c|c|c|c|c|c|c|c|}
				\hline
				(10,0) &(9,1)&(8,2)&(7,3)&(6,4)&(5,5)&(4,6)&(3,7)&(2,8)&(1,9)&(0,10)\\ \hline
				$-\frac{9}{2}\alpha+10\delta$	&$-\frac{7}{2}\alpha+6\delta$&	$-\frac{5}{2}\alpha+3\delta$ &	$-\frac{3}{2}\alpha+\delta$ &	$-\frac{1}{2}\alpha$ &	$\frac{1}{2}\alpha$&$\frac{3}{2}\alpha+\delta$&$\frac{5}{2}\alpha+3\delta$ &$\frac{7}{2}\alpha+6\delta$&$\frac{9}{2}\alpha+10\delta$ &$\frac{11}{2}\alpha+15\delta$\\
				\hline
			\end{tabular}
		}
		\caption{vertices and labels on them for the $T$-action on $X(1,2,5)$.}
		\label{lab:KT_X125}
	\end{table}

		\begin{minipage}{0.4\textwidth}
		\begin{figure}[H]
			\begin{tikzpicture}
				\begin{scope}[xscale=0.6, yscale=0.6]
					\coordinate (1) at (0,0);
					\coordinate (2) at (-3,2);
					\coordinate (3) at (3,2);
					\coordinate (4) at (-3.7,5);
					\coordinate (5) at (3.7,5);
					\coordinate (6) at (-3.95,8);
					\coordinate (7) at (3.95,8);
					\coordinate (8) at (-3.75,11);
					\coordinate (9) at (3.75,11);

					\fill(1) circle (4pt);
					\node[below] at (1) {\tiny $(4,4)$};
					\fill(2) circle (4pt);
					\node[left] at (2) {\tiny$(5,3)$};
					\fill(3) circle (4pt);
					\node[right] at (3) {\tiny$(3,5)$}; 
					\fill(4) circle (4pt);
					\node[left] at (4) {\tiny$(6,2)$}; 
					\fill(5) circle (4pt);
					\node[right] at (5) {\tiny$(2,6)$}; 
					\fill(6) circle (4pt);
					\node[left] at (6) {\tiny$(7,1)$}; 
					\fill(7) circle (4pt);
					\node[right] at (7) {\tiny$(1,7)$}; 
					\fill(8) circle (4pt);			
					\node[left] at (8) {\tiny$(8,0)$}; 
					\fill(9) circle (4pt);
					\node[right] at (9) {\tiny$(0,8)$}; 
					
					\draw[->,shorten <= 5pt, shorten >= 5pt] (2)--(1) node[midway,left] {\tiny\textcolor{blue}{$-\alpha$}};
					
					\draw[->,shorten <= 5pt, shorten >= 5pt] (3)--(1) node[midway, sloped, below] {\tiny\textcolor{blue}{$\alpha+\delta$}};

					\draw[->,shorten <= 5pt, shorten >= 5pt] (4)--(2) node[midway,sloped, below] {\tiny\textcolor{blue}{$-\alpha+\delta$}};
					
					\draw[->,shorten <= 5pt, shorten >= 5pt] (4)--(3)node[pos=0.23,sloped,below] {\tiny\textcolor{blue}{$-3\alpha$}};
					
					\draw[->,shorten <= 5pt, shorten >= 5pt] (5)--(2)node[pos=0.3,sloped, below] {\tiny\textcolor{blue}{$3\alpha+3\delta$}};
					
					\draw[->,shorten <= 5pt, shorten >= 5pt] (5)--(3)node[midway,sloped, below] {\tiny\textcolor{blue}{$\alpha+2\delta$}};
					
					\draw[->,shorten <= 5pt, shorten >= 5pt] (6)--(4)node[midway,sloped,below] {\tiny\textcolor{blue}{$-\alpha+2\delta$}};
					
					\draw[->,shorten <= 5pt, shorten >= 5pt] (6)--(5)node[pos=0.26, sloped, above] {\tiny\textcolor{blue}{$-5\alpha$}};
					
					\draw[->,shorten <= 5pt, shorten >= 5pt] (6)--(1)node[pos=0.24, sloped, above] {\tiny\textcolor{blue}{$-3\alpha+3\delta$}};
					
					\draw[->,shorten <= 5pt, shorten >= 5pt] (7)--(4)node[pos=0.25, sloped, above] {\tiny\textcolor{blue}{$5\alpha+5\delta$}};

					\draw[->, shorten <= 5pt, shorten >= 5pt] (7)--(5)node[midway, sloped, below] {\tiny\textcolor{blue}{$\alpha+3\delta$}};
					
					\draw[->,shorten <= 5pt, shorten >= 5pt] (7)--(1)node[pos=0.3, sloped, above] {\tiny\textcolor{blue}{$3\alpha+6\delta$}};
					
					\draw[->,shorten <= 5pt, shorten >= 5pt] (8)--(7)node[pos=0.3, sloped, above] {\tiny\textcolor{blue}{$-7\alpha$}};
					
					\draw[->,shorten <= 5pt, shorten >= 5pt] (8)--(3)node[pos=0.21, sloped, above] {\tiny\textcolor{blue}{$-5\alpha+5\delta$}};
					
					\draw[->,shorten <= 5pt, shorten >= 5pt] (8)--(2)node[pos=0.2, sloped, above] {\tiny\textcolor{blue}{$-3\alpha+6\delta$}};
					
					\draw[->,shorten <= 5pt, shorten >= 5pt] (8)--(6)node[midway, sloped, above] {\tiny\textcolor{blue}{$-\alpha+3\delta$}};
					
					\draw[->,shorten <= 5pt, shorten >= 5pt] (9)--(6)node[pos=0.3, sloped, above] {\tiny\textcolor{blue}{$7\alpha+7\delta$}};
					
					\draw[->,shorten <= 5pt, shorten >= 5pt] (9)--(7)node[midway, sloped, above] {\tiny\textcolor{blue}{$\alpha+4\delta$}};
					
					\draw[->,shorten <= 5pt, shorten >= 5pt] (9)--(2)node[pos=0.2, sloped, above] {\tiny\textcolor{blue}{$5\alpha+10\delta$}};
					
					\draw[->, shorten <= 5pt, shorten >= 5pt] (9)--(3)node[pos=0.22, sloped, above] {\tiny \textcolor{blue}{$3\alpha+9\delta$}};		
				\end{scope}
			\end{tikzpicture}
			\caption{the moment graph of $X(1,2,4)$ under the action of $T\cong(\mathbb{C}^*)^2$.}
			\label{qrep_omega4_fig}
		\end{figure}
	\end{minipage}
	\hspace{0.06\textwidth}
	\begin{minipage}{0.5\textwidth}
		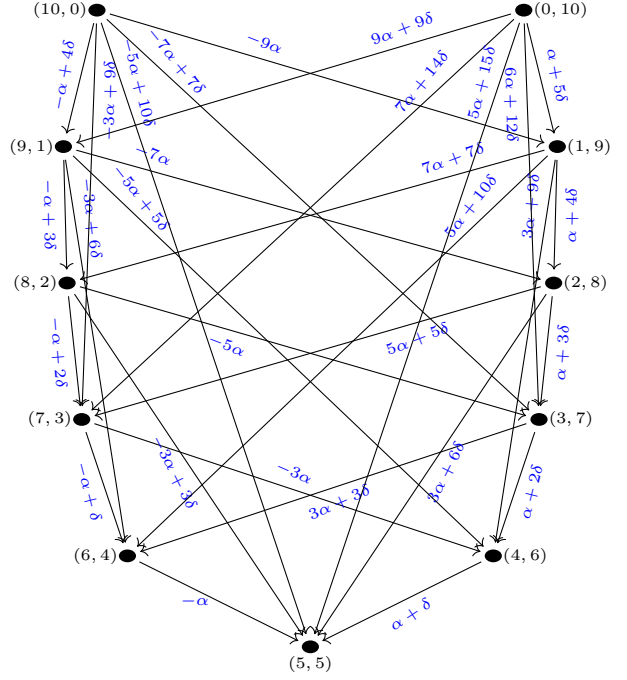
\begin{figure}[H]
			\begin{tikzpicture}
				\begin{scope}[xscale=0.8, yscale=0.6]
					
					\coordinate (1) at (0,0);
					\coordinate (2) at (-3,2);
					\coordinate (3) at (3,2);
					\coordinate (4) at (-3.75,5);
					\coordinate (5) at (3.75,5);
					\coordinate (6) at (-3.99,8);
					\coordinate (7) at (3.99,8);
					\coordinate (8) at (-4.05,11);
					\coordinate (9) at (4.05,11);
					\coordinate (10) at (-3.50,14);
					\coordinate (11) at (3.5,14);

					\fill(1) circle (4pt);
					\node[below] at (1) {\tiny $(5,5)$};
					\fill(2) circle (4pt);
					\node[left] at (2) {\tiny $(6,4)$};
					\fill(3) circle (4pt);
					\node[right] at (3) {\tiny $(4,6)$}; 
					\fill(4) circle (4pt);
					\node[left] at (4) {\tiny $(7,3)$}; 
					\fill(5) circle (4pt);
					\node[right] at (5) {\tiny $(3,7)$}; 
					\fill(6) circle (4pt);
					\node[left] at (6) {\tiny $(8,2)$}; 
					\fill(7) circle (4pt);
					\node[right] at (7) {\tiny $(2,8)$}; 
					\fill(8) circle (4pt);			
					\node[left] at (8) {\tiny $(9,1)$}; 
					\fill(9) circle (4pt);
					\node[right] at (9) {\tiny $(1,9)$}; 
					\node[left] at (10) {\tiny $(10,0)$}; 
					\fill(10) circle (4pt);
					\node[right] at (11) {\tiny $(0,10)$}; 
					\fill(11) circle (4pt);

					\draw[->,shorten <= 5pt, shorten >= 5pt] (2)--(1) node[midway,left] {\tiny\textcolor{blue}{$-\alpha$}};
					
					\draw[->,shorten <= 5pt, shorten >= 5pt] (3)--(1) node[midway, sloped, below] {\tiny\textcolor{blue}{$\alpha+\delta$}};

					\draw[->,shorten <= 5pt, shorten >= 5pt] (4)--(2) node[midway,sloped, below] {\tiny\textcolor{blue}{$-\alpha+\delta$}};
					
					\draw[->,shorten <= 5pt, shorten >= 5pt] (4)--(3)node[midway,sloped,above] {\tiny\textcolor{blue}{$-3\alpha$}};
					
					\draw[->,shorten <= 5pt, shorten >= 5pt] (5)--(2)node[midway,sloped, below] {\tiny\textcolor{blue}{$3\alpha+3\delta$}};
					
					\draw[->,shorten <= 5pt, shorten >= 5pt] (5)--(3)node[midway,sloped, below] {\tiny\textcolor{blue}{$\alpha+2\delta$}};
					
					\draw[->,shorten <= 5pt, shorten >= 5pt] (6)--(4)node[midway,sloped,below] {\tiny\textcolor{blue}{$-\alpha+2\delta$}};
					
					\draw[->,shorten <= 5pt, shorten >= 5pt] (6)--(5)node[pos=0.35, sloped, below] {\tiny\textcolor{blue}{$-5\alpha$}};
					
					\draw[->,shorten <= 5pt, shorten >= 5pt] (6)--(1)node[midway, sloped, below] {\tiny\textcolor{blue}{$-3\alpha+3\delta$}};
					
					\draw[->,shorten <= 5pt, shorten >= 5pt] (7)--(4)node[pos=0.3, sloped, below] {\tiny\textcolor{blue}{$5\alpha+5\delta$}};

					\draw[->, shorten <= 5pt, shorten >= 5pt] (7)--(5)node[midway, sloped, below] {\tiny\textcolor{blue}{$\alpha+3\delta$}};
					
					\draw[->,shorten <= 5pt, shorten >= 5pt] (7)--(1)node[midway, sloped, below] {\tiny\textcolor{blue}{$3\alpha+6\delta$}};

					\draw[->,shorten <= 5pt, shorten >= 5pt] (8)--(6)node[midway, sloped, below] {\tiny\textcolor{blue}{$-\alpha+3\delta$}};
					
					\draw[->,shorten <= 5pt, shorten >= 5pt] (8)--(7)node[pos=0.17, sloped, above] {\tiny\textcolor{blue}{$-7\alpha$}};
					
					\draw[->,shorten <= 5pt, shorten >= 5pt] (8)--(2)node[pos=0.18, sloped, above] {\tiny\textcolor{blue}{$-3\alpha+6\delta$}};
					
					\draw[->,shorten <= 5pt, shorten >= 5pt] (8)--(3)node[pos=0.15, sloped, above] {\tiny\textcolor{blue}{$-5\alpha+5\delta$}};
					
					\draw[->,shorten <= 5pt, shorten >= 5pt] (9)--(6)node[pos=0.2, sloped, above] {\tiny\textcolor{blue}{$7\alpha+7\delta$}};
					
					\draw[->,shorten <= 5pt, shorten >= 5pt] (9)--(7)node[midway, sloped, below] {\tiny\textcolor{blue}{$\alpha+4\delta$}};
					
					\draw[->,shorten <= 5pt, shorten >= 5pt] (9)--(2)node[pos=0.17, sloped, above] {\tiny\textcolor{blue}{$5\alpha+10\delta$}};
					
					\draw[->, shorten <= 5pt, shorten >= 5pt] (9)--(3)node[pos=0.15, sloped, above] {\tiny\textcolor{blue}{$3\alpha+9\delta$}};		
					
					\draw[->, shorten <= 5pt, shorten >= 5pt] (10)--(8)node[midway, sloped, above] {\tiny\textcolor{blue}{$-\alpha+4\delta$}};	
					
					\draw[->, shorten <= 5pt, shorten >= 5pt] (10)--(9)node[pos=0.35, sloped, above] {\tiny\textcolor{blue}{$-9\alpha$}};
					
					\draw[->, shorten <= 5pt, shorten >= 5pt] (10)--(4)node[pos=0.22, sloped, below] {\tiny\textcolor{blue}{$-3\alpha+9\delta$}};	
					
					\draw[->, shorten <= 5pt, shorten >= 5pt] (10)--(5)node[pos=0.15, sloped, above] {\tiny\textcolor{blue}{$-7\alpha+7\delta$}};
					
					\draw[->, shorten <= 5pt, shorten >= 5pt] (10)--(1)node[pos=0.12, sloped, above] {\tiny\textcolor{blue}{$-5\alpha+10\delta$}};

					\draw[->, shorten <= 5pt, shorten >= 5pt] (11)--(8)node[pos=0.25, sloped, above] {\tiny\textcolor{blue}{$9\alpha+9\delta$}};	
					
					\draw[->, shorten <= 5pt, shorten >= 5pt] (11)--(9)node[midway, sloped, above] {\tiny\textcolor{blue}{$\alpha+5\delta$}};
					
					\draw[->, shorten <= 5pt, shorten >= 5pt] (11)--(4)node[pos=0.2, sloped, above] {\tiny\textcolor{blue}{$7\alpha+14\delta$}};	
					
					\draw[->, shorten <= 5pt, shorten >= 5pt] (11)--(5)node[pos=0.23, sloped,below] {\tiny\textcolor{blue}{$6\alpha+12\delta$}};
					
					\draw[->, shorten <= 5pt, shorten >= 5pt] (11)--(1)node[pos=0.12, sloped, above] {\tiny\textcolor{blue}{$5\alpha+15\delta$}};

				\end{scope}
			\end{tikzpicture}
			\caption{the moment graph of $X(1,2,5)$ under the action of $T\cong(\mathbb{C}^*)^2$.}
			\label{qrep_omega5_fig}
		\end{figure}
	\end{minipage}
\end{exam}


	\section{KT classes, explicit formulas for the KT classes in the $n=2$ case.}\label{s:kt_classes, formula}
From now on, we focus on the quiver Grassmannians
$X_m := X(1,2,m) \quad \text{(i.e., } n=2, k=1, \omega=m\text{)}$
under the action of the torus $T \cong (\mathbb{C}^*)^2$.
In this section, we provide explicit formulas for the Knutson--Tao classes associated with the moment graph of $X_m$, which form a basis of its torus-equivariant cohomology. We also describe several relations among these classes, which are crucial for computing the equivariant cohomology ring of $X_m$ and, subsequently, of the affine flag variety of type $A_1$.

	\begin{defe}\label{ktclasses}
		For $1\leq q\leq m$, we define the maps
		\begin{align*}
	\xi(m+q,\,m-q) : \mathcal{V} \to \bq[\alpha,\delta], 
\qquad &
\xi(m-q,\,m+q) : \mathcal{V} \to \bq[\alpha,\delta]\\
&\\
\text{and, } \xi(m,m&):\cv \to \bq[\alpha,\delta]
		\end{align*}
		 respectively by
		\begin{equation}\label{eq:xi_plus_minus}
		\xi(m+q,m-q)(x)=
		\begin{cases}
		\displaystyle	\binom{\lceil \frac{p-q+1}{2} \rceil + q-1}{\,q\,}\prod_{k=\lceil \frac{p-q+1}{2} \rceil -1}^{\lceil \frac{p-q+1}{2} \rceil +q-2}\Big(-\alpha +k\delta\Big)                  &      \text{if $x=(m+p,m-p), q\leq p\leq m$}  \\		
			&\\
		\displaystyle	\binom{\lceil \frac{p-q}{2} \rceil+ q-1}{\,q\,}\prod_{k=\lceil \frac{p-q}{2} \rceil+1}^{\lceil \frac{p-q}{2} \rceil+q}\Big(\alpha+k\delta \Big)  &      \text{if $x=(m-p,m+p), q+1\leq p\leq m$}  \\
			&\\
			~~~~~~~0&      \text{otherwise}  
		\end{cases}
	\end{equation}
		\begin{equation}\label{eq:xi_minus_plus}
		\xi(m-q,m+q)(x)=
		\begin{cases}
		\displaystyle	\binom{\lceil \frac{p-q}{2} \rceil + q-1}{\,q\,}\prod_{k=\lceil \frac{p-q}{2} \rceil}^{\lceil \frac{p-q}{2} \rceil +q-1}\Big(-\alpha +k\delta\Big)                   &      \text{if $x=(m+p,m-p), q+1\leq p\leq m$}  \\		
			&\\
			\displaystyle\binom{\lceil \frac{p-q+1}{2} \rceil+ q-1}{\,q\,}\prod_{k=\lceil \frac{p-q+1}{2} \rceil}^{\lceil \frac{p-q+1}{2} \rceil+q-1 }\Big(\alpha+k\delta \Big)&      \text{if $x=(m-p,m+p), q\leq p\leq m$}  \\
			&\\
			~~~~~~~0&      \text{otherwise}  
		\end{cases}
			\end{equation}

				\begin{equation}\label{eq:xi_mm}
			\text{and }~~~~	\xi(m,m)(x)=1 \text{ for any } x\in\cv.
			\end{equation}
	\end{defe}

\begin{guess}\label{gen_in_cohom-ring}
For $1 \leq q \leq m$, the maps $\xi(m+q,m-q)$, $\xi(m-q,m+q)$, and $\xi(m,m)$ are elements of $H_T^*(X_m)$.
\end{guess}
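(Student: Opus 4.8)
The plan is to verify directly that each of the three maps satisfies the GKM congruence relations of~\eqref{def:eqcohom}; since $X_m$ is one of the GKM varieties to which that description applies, membership in $H_T^*(X_m)$ is \emph{equivalent} to these congruences. The class $\xi(m,m)\equiv 1$ is constant, so every difference $\xi(x)-\xi(y)$ vanishes and the relations hold trivially. For the other two families it suffices to run through the edges of $\cg_m$ one at a time. First I would rewrite the label~\eqref{label:edge} of an edge $(m-a,m+a)\to(m-b,m+b)$ as $\tfrac{a-b}{2}\bigl(2\alpha+(a+b+1)\delta\bigr)$. Because $a\neq b$ on any edge, the scalar $\tfrac{a-b}{2}$ is a unit of $\bq$, so divisibility by the label is equivalent to divisibility by the irreducible linear form $\ell:=\alpha+k_0\delta$ with $k_0:=\tfrac{a+b+1}{2}$. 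The edge condition forces $|a|-|b|$ to be odd, whence $a+b$ is odd and $k_0\in\bz$, so that $\ell$ is of the same type as the admissible factors $\pm\alpha+k\delta$ occurring in~\eqref{eq:xi_plus_minus}--\eqref{eq:xi_minus_plus}.

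Next I would reduce each congruence to a numerical identity by passing to the quotient modulo $\ell$, i.e.\ substituting $\alpha\mapsto-k_0\delta$ (legitimate since $\ell$ is prime, so $f\equiv 0\!\!\pmod{\ell}$ iff $f|_{\alpha=-k_0\delta}=0$). Under this substitution $\alpha+k\delta\mapsto(k-k_0)\delta$ and $-\alpha+k\delta\mapsto(k+k_0)\delta$. Using the elementary identity $\binom{c+q-1}{q}=\tfrac{1}{q!}\prod_{k=c}^{c+q-1}k$, every nonzero component of $\xi$ becomes
\[
\frac{\delta^{q}}{q!}\prod_{k=c}^{c+q-1} k\,(k\mp k_0),
\]
where $c$ is the relevant ceiling ($\lceil\tfrac{p-q+1}{2}\rceil$ or $\lceil\tfrac{p-q}{2}\rceil$) and the sign records whether the vertex lies on the $(m-p,m+p)$ side (plus type, $k-k_0$) or the $(m+p,m-p)$ side (minus type, $k+k_0$). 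Thus $\ell\mid\xi(x)-\xi(y)$ amounts to an equality of two such products of the quadratics $k(k-k_0)$ and $k(k+k_0)$, or to the vanishing of a single product when one endpoint lies outside the support.

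The identities then follow from the support of each class together with two elementary symmetries. One first checks that $\xi(m-q,m+q)$ (resp.\ $\xi(m+q,m-q)$) is supported exactly on the vertices with $p>q$ on both sides, together with the diagonal vertex $(m-q,m+q)$ (resp.\ $(m+q,m-q)$). For an edge with one endpoint outside this support the value there is $0$, and a direct check shows $k_0$ (resp.\ $-k_0$) lies in the index window $[c,c+q-1]$, so that $\ell$ is literally a factor of the surviving value; at the diagonal, $k_0\in[1,q]$, which is precisely the window. For an edge joining two support vertices on the same side both values carry the same quadratic, which is symmetric about its vertex, and the two index windows turn out to be reflections of one another (e.g.\ $c_b=k_0-c_a-q+1$ in the positive--positive case), so the products agree. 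For an edge joining the two sides one value carries $k(k-k_0)$ and the other $k(k+k_0)$; the translation $k\mapsto k-k_0$ identifies these quadratics, and the windows satisfy $c_b=c_a-k_0$, so again the products agree. Finally, one notes that each nonzero component is homogeneous of degree $q$, so the resulting element lies in a single graded piece.

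I expect the main obstacle to be bookkeeping rather than any conceptual difficulty. The ceilings $\lceil\tfrac{p-q+1}{2}\rceil$ and $\lceil\tfrac{p-q}{2}\rceil$ behave differently according to the parities of $p$ and $q$, and the window relations $c_b=k_0-c_a-q+1$ and $c_b=c_a-k_0$ must be confirmed in each parity case, all the while correctly determining which endpoint (if any) falls outside the support and whether the vertex sits on the plus or minus side. The verification for $\xi(m+q,m-q)$ is entirely analogous to that for $\xi(m-q,m+q)$, the roles of the two sides and of the sign of $\alpha$ being interchanged, which should nearly halve the casework.
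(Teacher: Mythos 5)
Your proposal is correct in substance, but it takes a genuinely different route from the paper. The paper verifies the GKM congruences \eqref{def:eqcohom} by hand only for the two degree-two classes $\xi(m+1,m-1)$ and $\xi(m-1,m+1)$ (a case-by-case check over the edge types of $\cg_m$), and then invokes the ring structure of $H_T^*(X_m)$: by Corollary~\ref{rep:deg4} and Propositions~\ref{mult:q_odd_propo} and~\ref{mult:q_even_propo}, each higher class $\xi(m\pm q,m\mp q)$ is a rational multiple of a product of classes already known to lie in the ring (shifted by constants from $\bq[T]$), so membership follows by induction on $q$ with no further congruence checking. You instead verify the congruences uniformly for every $q$ at once: factoring the edge label as a nonzero rational times the irreducible form $\ell=\alpha+k_0\delta$, substituting $\alpha\mapsto -k_0\delta$, and reducing each congruence to an identity between products of a fixed quadratic over integer windows, settled by the symmetry of the quadratic (same-side edges) or a translation identifying the two quadratics (cross-side edges), plus a divisibility check when one endpoint leaves the support. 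This mechanism is sound — I checked representative parity cases and the window relations do close up — and it has the virtue of being self-contained, not relying on the multiplicative relations at all; the trade-off is exactly the parity/ceiling bookkeeping you flag, which the paper's induction avoids entirely, and which your sketch does not yet get quite right: because the binomial coefficient $\binom{c+q-1}{q}=\tfrac{1}{q!}\prod_{k=c}^{c+q-1}k$ and the linear-form product in \eqref{eq:xi_plus_minus}--\eqref{eq:xi_minus_plus} run over windows shifted by one, the reduced components are of the form $\tfrac{\delta^q}{q!}\prod_{k=c}^{c+q-1}k\,(k-1\pm k_0)$ rather than $k(k\mp k_0)$, so your reflection and translation formulas ($c_b=k_0-c_a-q+1$ and $c_b=c_a-k_0$) acquire corresponding off-by-one corrections (e.g.\ the cross-side translation is $c_b=c_a+1-k_0$). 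These are sketch-level inaccuracies, not gaps in the method, but they would need to be fixed in a complete write-up. One further small remark: the paper's route also has the economy that Propositions~\ref{mult:q_odd_propo} and~\ref{mult:q_even_propo} are needed later anyway (for Theorem~\ref{surj}), so the inductive proof of Theorem~\ref{gen_in_cohom-ring} comes essentially for free once they are established.
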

We postpone the proof of Theorem~\ref{gen_in_cohom-ring} until the end of this section, after we establish some more results.

\begin{defe}\label{Def:p^_}
	For any $0 \leq q \leq m$, we define
	\[
	p_{(m+q,m-q)}^{(2m-i,i)} := \xi(m+q,m-q)\big((2m-i,i)\big), \qquad 0 \leq i \leq 2m,
	\]
	and
	\[
	p_{(m-q,m+q)}^{(2m-i,i)} := \xi(m-q,m+q)\big((2m-i,i)\big), \qquad 0 \leq i \leq 2m.
	\]
At several points in the paper, we shall interpret 
$p_{(m+q,m-q)}^{(2m-i,i)},~p_{(m-q,m+q)}^{(2m-i,i)} \in \mathbb{Q}[T] (\cong\bq[\alpha,\delta])$ 
as the constant maps 
$p_{(m+q,m-q)}^{(2m-i,i)} \cdot \xi(m,m)$ 
and 
$p_{(m-q,m+q)}^{(2m-i,i)} \cdot \xi(m,m)$, 
respectively.
\end{defe}

\begin{rema}\label{prod_of_edge_label}
	In particular, for any $0\leq q\leq m$,  $p_{(m+q,m-q)}^{(m+q,m-q)}$ (resp. $p_{(m-q,m+q)}^{(m-q,m+q)}$) is given by the product of the labels (upto a scalar multiple) of all outgoing edges  from the vertex $(m+q,m-q)$ (resp. $(m-q,m+q)$).
\end{rema}

Note that the maps $\xi(m+q,m-q)$ and $\xi(m-q,m+q)$ for $q = 1, \ldots, m$, together with $\xi(m,m)$, satisfy the conditions of Definition~\ref{ktclass}. 
Hence, by Proposition~\ref{ktclassformsbasis}, we obtain that  
\[
\Big\{\xi(m,m),~\xi(m+q,m-q),~\xi(m-q,m+q) : 1 \le q \le m\Big\}
\]
forms a $\bq[\alpha,\delta]$- basis of $H_T^*(X_m)$ (see also Lemma~\ref{Lem:strconsts} and Remark~\ref{ktclass_indep}). 
We refer to this as the \emph{Knutson--Tao type basis}.

	\begin{exam}		
For the quiver Grassmannian $X(1,2,4)$ under the action of $T \cong    (\mathbb{C}^*)^2$, 
the Knutson--Tao type basis elements $\{\xi(a,b)\}$ are listed in Table~\ref{tab:KT_X124}.
	\begin{table}[H]
	\centering
	\scalebox{0.8}{
	\begin{tabular}{|c|c|c|c|c|c|c|c|c|c|}
	\hline
	&(8,0) &(0,8)&(7,1)&(1,7)&(6,2)&(2,6)&(5,3)&(3,5)&(4,4)\\ \hline
	$\xi(8,0)$ &$\displaystyle\prod_{k=0}^{3}(-\alpha+k\delta)$ &0&0&0&0&0&0&0&0\\ \hline
	$\xi(0,8)$&0 & $\displaystyle\prod_{k=1}^{4}(\alpha+k\delta)$&0&0&0&0&0&0&0\\ \hline
	$ \xi(7,1)$&$\displaystyle\prod_{k=0}^{2}(-\alpha+k\delta)$ &$\displaystyle\prod_{k=2}^{4}(\alpha+k\delta)$ &$\displaystyle\prod_{k=0}^{2}(-\alpha+k\delta)$&0&0&0&0&0&0\\\hline
	 $\xi(1,7)$&$\displaystyle\prod_{k=1}^{3}(-\alpha+k\delta)$ &$\displaystyle\prod_{k=1}^{3}(\alpha+k\delta)$&0 &$\displaystyle\prod_{k=1}^{3}(\alpha+k\delta)$&0&0&0&0&0\\ \hline
	 $\xi(6,2)$&$\displaystyle 3\prod_{k=1}^{2}(-\alpha+k\delta)$ &$\displaystyle\prod_{k=2}^{3}(\alpha+k\delta)$ &$\displaystyle\prod_{k=0}^{1}(-\alpha+k\delta)$&$\displaystyle\prod_{k=2}^{3}(\alpha+k\delta)$&$\displaystyle\prod_{k=0}^{1}(-\alpha+k\delta)$&0&0&0&0\\ \hline
	 
	  $\xi(2,6)$&$\displaystyle\prod_{k=1}^{2}(-\alpha+k\delta)$ &$\displaystyle 3 \prod_{k=2}^{3}(\alpha+k\delta)$ &$\displaystyle\prod_{k=1}^{3}(\alpha+k\delta)$&$\displaystyle\prod_{k=1}^{2}(-\alpha+k\delta)$&0&$\displaystyle\prod_{k=1}^{2}(\alpha+k\delta)$&0&0&0\\  \hline 
	  
	  $\xi(5,3)$&$2(-\alpha+\delta)$ &$2(\alpha+3\delta)$&$2(-\alpha+\delta)$&$(\alpha+2\delta)$ &$-\alpha$ &$\alpha+2\delta$ &$-\alpha$ &0&0\\ \hline
	  $\xi(3,5)$&$2(-\alpha+2\delta)$ &$2(\alpha+2\delta)$ &$-\alpha+\delta$ &$2(\alpha+2\delta)$ &$-\alpha+\delta$ &$\alpha+\delta$ &0&$\alpha+\delta$ &0\\ \hline
	  $\xi(4,4)$ &1&1&1&1&1&1&1&1&1\\ \hline
\end{tabular}
}
\caption{Values of the Knutson--Tao type basis elements $\xi(a,b)$ for $X(1,2,4)$ under the action of $T\cong   (\mathbb{C}^*)^2$.}
\label{tab:KT_X124}
\end{table}
\end{exam}

\begin{exam}		
	For the quiver Grassmannian $X(1,2,5)$ under the action of $T =    (\mathbb{C}^*)^2$, 
	the Knutson--Tao type basis elements $\{\xi(a,b)\}$ are listed in Table~\ref{tab:KT_X125}.
	\begin{table}[H]
		\centering
		\scalebox{0.68}{
			\begin{tabular}{|c|c|c|c|c|c|c|c|c|c|c|c|}
				\hline
				&(10,0) &(0,10)&(9,1)&(1,9)&(8,2)&(2,8)&(7,3)&(3,7)&(6,4)&(4,6)&(5,5)\\ \hline
				$\xi(10,0)$ &$\displaystyle\prod_{k=0}^{4}(-\alpha+k\delta)$ &0&0&0&0&0&0&0&0&0&0\\ \hline
				$\xi(0,10)$&0 & $\displaystyle\prod_{k=1}^{5}(\alpha+k\delta)$&0&0&0&0&0&0&0&0&0\\ \hline
				
				$\xi(9,1)$ &$\displaystyle\prod_{k=0}^{4}(-\alpha+k\delta)$ &$\displaystyle\prod_{k=2}^{5}(\alpha+k\delta)$&$\displaystyle\prod_{k=0}^{4}(-\alpha+k\delta)$ &0&0&0&0&0&0&0&0\\ \hline
				
				$\xi(1,9)$& $\displaystyle\prod_{k=1}^{4}(-\alpha+k\delta)$& $\displaystyle\prod_{k=1}^{5}(\alpha+k\delta)$&0 & $\displaystyle\prod_{k=1}^{5}(\alpha+k\delta)$&0&0&0&0&0&0&0\\ \hline
				
				$\xi(8,2)$&$4\displaystyle\prod_{k=1}^{3}(-\alpha+k\delta)$ &$\displaystyle\prod_{k=2}^{4}(\alpha+k\delta)$&$\displaystyle\prod_{k=0}^{2}(-\alpha+k\delta)$ &$\displaystyle\prod_{k=2}^{4}(\alpha+k\delta)$ &$\displaystyle\prod_{k=0}^{2}(-\alpha+k\delta)$&0&0&0&0&0&0\\\hline
				
				$\xi(2,8)$&$\displaystyle\prod_{k=1}^{3}(-\alpha+k\delta)$&$4\displaystyle\prod_{k=2}^{4}(\alpha+k\delta)$&$\displaystyle\prod_{k=1}^{3}(-\alpha+k\delta)$ &$\displaystyle\prod_{k=1}^{3}(\alpha+k\delta)$&0 &$\displaystyle\prod_{k=1}^{3}(\alpha+k\delta)$&0&0&0&0&0\\ \hline
				
				$\xi(7,3)$&$3\displaystyle\prod_{k=1}^{2}(-\alpha+k\delta)$&$3\displaystyle\prod_{k=3}^{4}(\alpha+k\delta)$&$\displaystyle 3\prod_{k=1}^{2}(-\alpha+k\delta)$ &$\displaystyle\prod_{k=2}^{3}(\alpha+k\delta)$ &$\displaystyle\prod_{k=0}^{1}(-\alpha+k\delta)$&$\displaystyle\prod_{k=2}^{3}(\alpha+k\delta)$&$\displaystyle\prod_{k=0}^{1}(-\alpha+k\delta)$&0&0&0&0\\ \hline
				
				$\xi(3,7)$&$3\displaystyle\prod_{k=2}^{3}(-\alpha+k\delta)$&$3\displaystyle\prod_{k=2}^{3}(\alpha+k\delta)$&$\displaystyle\prod_{k=1}^{2}(-\alpha+k\delta)$ &$\displaystyle 3 \prod_{k=2}^{3}(\alpha+k\delta)$ &$\displaystyle\prod_{k=1}^{3}(\alpha+k\delta)$&$\displaystyle\prod_{k=1}^{2}(-\alpha+k\delta)$&0&$\displaystyle\prod_{k=1}^{2}(\alpha+k\delta)$&0&0&0\\  \hline 
				
				$\xi(6,4)$&$3(-\alpha+2\delta)$&$2(\alpha+3\delta)$&$2(-\alpha+\delta)$ &$2(\alpha+3\delta)$&$2(-\alpha+\delta)$&$(\alpha+2\delta)$ &$-\alpha$ &$\alpha+2\delta$ &$-\alpha$ &0&0\\ \hline
				
				$\xi(4,6)$&$2(-\alpha+2\delta)$&$3(\alpha+3\delta)$&$2(-\alpha+2\delta)$ &$2(\alpha+2\delta)$ &$-\alpha+\delta$ &$2(\alpha+2\delta)$ &$-\alpha+\delta$ &$\alpha+\delta$ &0&$\alpha+\delta$ &0\\ \hline
				
				$\xi(5,5)$ &1&1&1&1&1&1&1&1&1&1&1\\ \hline
			\end{tabular}
		}
		\caption{Values of the Knutson--Tao type basis elements $\xi(a,b)$ for $X(1,2,5)$ under the action of $T\cong (\mathbb{C}^*)^2$.}
		\label{tab:KT_X125}
	\end{table}
\end{exam}

		\begin{propo}\label{mult:deg2deg2}
			We have the following realation:
		\begin{equation}\label{mult22}
		\xi(m+1,m-1)\cdot\xi(m-1,m+1)=	\xi(m+2,m-2)+\xi(m-2,m+2).
		\end{equation}
	\end{propo}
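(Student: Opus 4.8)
The plan is to work entirely inside the GKM presentation \eqref{def:eqcohom}, under which $H_T^*(X_m)$ is realized as a subring of $\prod_{x\in\cv}\bq[\alpha,\delta]$ whose ring operations are computed componentwise. In this model both sides of \eqref{mult22} are tuples of polynomials indexed by the vertices, so the identity is equivalent to the pointwise equalities
$\xi(m+1,m-1)(x)\,\xi(m-1,m+1)(x)=\xi(m+2,m-2)(x)+\xi(m-2,m+2)(x)$
for every $x\in\cv$. Rather than compare the explicit formulas \eqref{eq:xi_plus_minus}--\eqref{eq:xi_minus_plus} at all $2m+1$ vertices, which would force a tedious parity analysis of the ceiling functions, I would expand the left-hand side $L:=\xi(m+1,m-1)\cdot\xi(m-1,m+1)$ in the Knutson--Tao basis of Proposition \ref{ktclassformsbasis} and pin down the few structure constants that can occur.

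First I would use homogeneity to bound the degree. Each $\xi(m\pm q,m\mp q)$ is homogeneous of degree $q$ in $\bq[\alpha,\delta]$, its nonzero components being products of $q$ linear forms with a constant binomial prefactor (Definition \ref{ktclass}(ii)); hence $L$ is homogeneous of degree $2$. Writing $L=\sum_z c_z\,\xi(z)$ with $c_z\in\bq[\alpha,\delta]$ homogeneous of complementary degree, only basis classes of degree at most $2$ can appear, namely $\xi(m,m)$, $\xi(m\pm1,m\mp1)$ and $\xi(m\pm2,m\mp2)$. I would then determine the $c_z$ by evaluating at $T$-fixed points in increasing order, exploiting the triangularity of Definition \ref{ktclass}(iii), which states $\xi(z)(y)=0$ unless $z\leq_\cv y$.

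Evaluating at the minimum $(m,m)$ gives $L(m,m)=0$ (each factor vanishes there), and since $\xi(m,m)$ is the only class supported at $(m,m)$ with $\xi(m,m)(m,m)=1$, we get $c_{(m,m)}=0$. At $(m+1,m-1)$ the factor $\xi(m-1,m+1)$ vanishes (the two length-one vertices are $\leq_\cv$-incomparable, as also read off directly from \eqref{eq:xi_minus_plus}), so $L(m+1,m-1)=0$; the only surviving classes there are $\xi(m,m)$ and $\xi(m+1,m-1)$, and the latter has nonzero self-value equal to the product of its outgoing edge labels (Remark \ref{prod_of_edge_label}), forcing $c_{(m+1,m-1)}=0$, and symmetrically $c_{(m-1,m+1)}=0$. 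It then remains to compute the two top coefficients: at $y=(m+2,m-2)$ the class $\xi(m-2,m+2)$ vanishes (its $(m+p,m-p)$-branch only starts at $p\geq 3$), so $L(m+2,m-2)=c_{(m+2,m-2)}\,\xi(m+2,m-2)(m+2,m-2)$. From \eqref{eq:xi_plus_minus}--\eqref{eq:xi_minus_plus} one reads $\xi(m+1,m-1)(m+2,m-2)=-\alpha$ and $\xi(m-1,m+1)(m+2,m-2)=-\alpha+\delta$, while $\xi(m+2,m-2)(m+2,m-2)=(-\alpha)(-\alpha+\delta)$, whence $c_{(m+2,m-2)}=1$. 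The mirror computation at $(m-2,m+2)$, using $\xi(m+1,m-1)(m-2,m+2)=\alpha+2\delta$, $\xi(m-1,m+1)(m-2,m+2)=\alpha+\delta$ and $\xi(m-2,m+2)(m-2,m+2)=(\alpha+\delta)(\alpha+2\delta)$, gives $c_{(m-2,m+2)}=1$, yielding exactly \eqref{mult22}.

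The main point to get right is the organization: homogeneity together with triangularity must be leveraged to collapse the whole verification to a handful of fixed-point evaluations, so that no general binomial identity is ever needed. Indeed, the binomial prefactors appearing in \eqref{eq:xi_plus_minus}--\eqref{eq:xi_minus_plus} all reduce to $1$ at the vertices actually used, so the only genuine obstacle is the careful justification of the vanishings (the incomparability of the two length-one and two length-two vertices) that decouples the system; once these are in place the remaining arithmetic is immediate.
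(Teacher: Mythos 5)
Your proof is correct, but it takes a genuinely different route from the paper. The paper proves \eqref{mult22} as a purely pointwise polynomial identity: it evaluates both sides at every vertex $(m\pm p,m\mp p)$, $p\geq 2$, using the explicit formulas \eqref{eq:xi_plus_minus}--\eqref{eq:xi_minus_plus}, and checks equality by a parity analysis of the ceiling functions (all remaining vertices giving $0=0$). You instead treat the product as an element of $H_T^*(X_m)$, expand it in the Knutson--Tao basis, use homogeneity plus linear independence to restrict attention to the classes of polynomial degree at most $2$, and pin down the five possible coefficients by evaluating at the five lowest fixed points; your vertex computations ($-\alpha$, $-\alpha+\delta$, $\alpha+2\delta$, $\alpha+\delta$, and the two self-values, with all binomial prefactors equal to $1$) are correct, as are the vanishing/incomparability claims that decouple the system. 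What your approach buys is the elimination of the general-$p$ parity bookkeeping; what it costs is a dependence on the basis property, i.e.\ on Theorem \ref{gen_in_cohom-ring} (membership of all the $\xi$'s in $H_T^*(X_m)$) together with Proposition \ref{ktclassformsbasis} (or Lemma \ref{Lem:strconsts} and Remark \ref{ktclass_indep}). This creates an ordering issue you should make explicit: the paper deliberately keeps Proposition \ref{mult:deg2deg2} at the level of maps $\cv\to\bq[\alpha,\delta]$, precisely because the proof of Theorem \ref{gen_in_cohom-ring} is postponed to the end of the section. Your argument is nevertheless not circular: in the paper, Theorem \ref{gen_in_cohom-ring} rests on a direct congruence check for the two degree-two classes together with Corollary \ref{rep:deg4} and Propositions \ref{mult:q_odd_propo} and \ref{mult:q_even_propo}, none of which uses \eqref{mult22}. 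So your proof stands, provided you reorder the exposition (establish Theorem \ref{gen_in_cohom-ring} and the basis statement first) and record that no step in that chain invokes the present proposition.
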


	\begin{proof}
		We verify the result by evaluating \eqref{mult22} for each $x\in\cv$.
\begin{enumerate}[(i)]
	\item if $x=(m+p,m-p)$, $p\geq 2$: 
	\[\begin{split}
\text{LHS}=&\lceil \frac{p}{2}\rceil \Big(-\alpha +(\lceil \frac{p}{2}\rceil -1)\delta\Big)
\lceil \frac{p-1}{2} \rceil \Big(-\alpha +\lceil \frac{p-1}{2} \rceil \delta\Big),\\
\text{RHS}=&\binom{\lceil \frac{p-1}{2} \rceil + 1}{\,2\,}\displaystyle\prod_{k=\lceil \frac{p-1}{2} \rceil -1}^{\lceil \frac{p-1}{2} \rceil }\big(-\alpha +k\delta\big)      +	\binom{\lceil \frac{p-2}{2} \rceil + 1}{\,2\,}\displaystyle\prod_{k=\lceil \frac{p-2}{2} \rceil}^{\lceil \frac{p-2}{2} \rceil +1 }\big(-\alpha +k\delta\big).	\end{split}\]
Depending on the parity of $p$, it is straightforward to verify that the left-hand side and right-hand side are equal.

	\item if $x=(m-p,m+p)$, $p\geq 2$: 
	\[\begin{split}
		\text{LHS}=&	\lceil \frac{p-1}{2}\rceil\Big(\alpha+(\lceil \frac{p-1}{2}\rceil+1)\delta \Big) 	{\lceil \frac{p}{2} \rceil}\Big(\alpha+\lceil \frac{p}{2} \rceil\delta \Big) ,\\
		\text{RHS}=&\binom{\lceil \frac{p-2}{2} \rceil+ 1}{\,2\,}\displaystyle\prod_{k=\lceil \frac{p-2}{2} \rceil+1}^{\lceil \frac{p-2}{2} \rceil+2}\Big(\alpha+k\delta \Big)+\binom{\lceil \frac{p-1}{2} \rceil+ 1}{\,2\,}\displaystyle\prod_{k=\lceil \frac{p-1}{2} \rceil}^{\lceil \frac{p-1}{2} \rceil+1 }\Big(\alpha+k\delta \Big) 
	\end{split}\]
	Depending on the parity of $p$, it is straightforward to verify that the left-hand side and right-hand side are equal.
	
\item For all other vertices, both sides of the expression are zero.
	\end{enumerate}	
\end{proof}

\begin{propo}\label{mult:highest}
	For any $0\leq q\leq m$, we have the following multiplicative relations:
\begin{equation}\label{mult:2m0}
	\xi(2m,0)\Big(\xi(m+q,m-q)-p_{(m+q,m-q)}^{(2m,0)}\Big)=0,	\qquad \xi(2m,0)\Big(\xi(m-q,m+q)-p_{(m-q,m+q)}^{(2m,0)}\Big)=0
\end{equation}
\begin{equation}\label{mult:02m}
	\xi(0,2m)\Big(\xi(m+q,m-q)-p_{(m+q,m-q)}^{(0,2m)}\Big)=0,	\qquad \xi(0,2m)\Big(\xi(m-q,m+q)-p_{(m-q,m+q)}^{(0,2m)}\Big)=0.
\end{equation}
\end{propo}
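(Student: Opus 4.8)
The plan is to exploit the fact that multiplication in $H_T^*(X_m)$ is computed vertex-by-vertex, combined with the observation that both $\xi(2m,0)$ and $\xi(0,2m)$ are supported at a single fixed point. First I would record that, under the identification \eqref{def:eqcohom}, the product of two classes is their pointwise product: for $f,g \in H_T^*(X_m)$ one has $(f\cdot g)(x) = f(x)\,g(x)$ for every $x \in \cv$. Hence it suffices to check that each of the four displayed products vanishes at every vertex $x \in \cv$.

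The key step is to determine the support of $\xi(2m,0)$ and of $\xi(0,2m)$. The vertex $(2m,0)$ corresponds to the parameter $p=-m$ (equivalently $|p|=m$, its maximal value), and by the edge rule \eqref{edge:quiver grass} every edge incident to it is outgoing; thus no oriented path reaches $(2m,0)$ from any other vertex. By condition (iii) of Definition~\ref{ktclass}, this forces $\xi(2m,0)(x)=0$ for all $x \neq (2m,0)$, so that $\xi(2m,0)$ is supported only at $(2m,0)$. (Alternatively, this can be read off directly from the explicit formula \eqref{eq:xi_plus_minus} with $q=m$, whose only nonvacuous case is supported at the single vertex $(2m,0)$.) The identical argument applied to the source vertex $(0,2m)$, using \eqref{eq:xi_minus_plus} with $q=m$, shows that $\xi(0,2m)$ is supported only at $(0,2m)$.

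With the supports in hand the conclusion is immediate. Consider the first identity in \eqref{mult:2m0}. For $x \neq (2m,0)$ the factor $\xi(2m,0)(x)=0$, so the product vanishes there. At $x=(2m,0)$ the second factor, evaluated via the convention of Definition~\ref{Def:p^_} that interprets $p_{(m+q,m-q)}^{(2m,0)}$ as the constant class $p_{(m+q,m-q)}^{(2m,0)}\cdot\xi(m,m)$, equals
\[
\xi(m+q,m-q)\big((2m,0)\big) - p_{(m+q,m-q)}^{(2m,0)} = p_{(m+q,m-q)}^{(2m,0)} - p_{(m+q,m-q)}^{(2m,0)} = 0,
\]
by the very definition of $p_{(m+q,m-q)}^{(2m,0)}$. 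Hence the product is zero at $(2m,0)$ as well, and therefore vanishes in $H_T^*(X_m)$. The three remaining identities follow in exactly the same way, using the single-point support of $\xi(2m,0)$ (resp. $\xi(0,2m)$) together with the defining equality $\xi(m-q,m+q)\big((2m,0)\big) = p_{(m-q,m+q)}^{(2m,0)}$ and its analogues at $(0,2m)$.

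The only point requiring genuine care is the support computation for $\xi(2m,0)$ and $\xi(0,2m)$; everything else is a formal consequence of pointwise multiplication and the defining relation for the scalars $p_{\bullet}^{(2m,0)}$ and $p_{\bullet}^{(0,2m)}$. Since that support statement admits both a structural proof (source vertex plus the support condition in Definition~\ref{ktclass}) and a direct verification from the formulas of Definition~\ref{ktclasses}, I do not anticipate any real difficulty.
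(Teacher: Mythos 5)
Your proof is correct and takes essentially the same approach as the paper: the paper's own argument likewise rests on the observation that $\xi(2m,0)$ (resp.\ $\xi(0,2m)$) vanishes at every vertex other than $(2m,0)$ (resp.\ $(0,2m)$), together with the defining equalities $\xi(m\pm q,m\mp q)\big((2m,0)\big)=p^{(2m,0)}_{(m\pm q,m\mp q)}$ from Definition~\ref{Def:p^_}, so that the pointwise product vanishes everywhere. Your extra justifications of the support claim (the source-vertex argument via Definition~\ref{ktclass}(iii) and the direct check from \eqref{eq:xi_plus_minus}, \eqref{eq:xi_minus_plus} at $q=m$) merely spell out what the paper leaves implicit.
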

\begin{proof}
	Since $\xi(2m,0)$ vanishes everywhere except at $(2m,0)$, and since (cf.  Definition  \ref{Def:p^_}),  
	\[
	\xi(m+q,m-q)(2m,0) = p_{(m+q,m-q)}^{(2m,0)}, \qquad 
	\xi(m-q,m+q)(2m,0) = p_{(m-q,m+q)}^{(2m,0)},
	\]
	it follows that \eqref{mult:2m0} holds.  
	
A similar argument shows that \eqref{mult:02m} holds.
\end{proof}

\begin{propo} \label{mult:q_odd_propo}
For each odd integer $q$ satisfying $0 < q < m$, we have the following multiplicative relations:
\begin{equation}\label{mult:odd+-+-}
		\xi(m+q,m-q)\Big(\xi(m+1,m-1)- p^{(m+q,m-q)}_{(m+1,m-1)}\Big)=0\cdot\xi(m+q+1,m-q-1)+(q+1)\cdot\xi(m-q-1,m+q+1)
\end{equation}
\begin{equation}\label{mult:odd-+-+}
		\xi(m-q,m+q)\Big(\xi(m-1,m+1)-p^{(m-q,m+q)}_{(m-1,m+1)}\Big)=(q+1)\cdot\xi(m+q+1,m-q-1)+0\cdot\xi(m-q-1,m+q+1)
\end{equation}
\end{propo}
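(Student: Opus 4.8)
The plan is to prove both identities by evaluating each side at every vertex $x\in\cv$. This is legitimate: the right-hand sides are $\bq$-linear combinations of Knutson--Tao classes and hence lie in $H_T^*(X_m)$, while the left-hand sides are products and differences of the classes $\xi(m\pm q,m\mp q)$, $\xi(m\pm1,m\mp1)$ and the scalar $p^{(m+q,m-q)}_{(m+1,m-1)}\cdot\xi(m,m)$, all of which lie in the ring by Theorem~\ref{gen_in_cohom-ring}; since the identification \eqref{def:eqcohom} realizes $H_T^*(X_m)$ as a set of functions $\cv\to\bq[\alpha,\delta]$, an equality of classes is equivalent to equality of all vertex values. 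I will treat \eqref{mult:odd+-+-} in full; \eqref{mult:odd-+-+} is proved by the entirely analogous computation in which the minus-branch formula \eqref{eq:xi_minus_plus} plays the role of \eqref{eq:xi_plus_minus}. Abbreviate $c:=p^{(m+q,m-q)}_{(m+1,m-1)}=\xi(m+1,m-1)\big((m+q,m-q)\big)$; evaluating \eqref{eq:xi_plus_minus} at the plus-vertex with $p=q$ and using that $q$ is odd (so $\lceil q/2\rceil=\tfrac{q+1}{2}$) gives the explicit value $c=\tfrac{q+1}{2}\big(-\alpha+\tfrac{q-1}{2}\delta\big)$.

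First I dispose of the vertices where the identity is trivial. Outside $\operatorname{supp}\xi(m+q,m-q)=\{(m+p,m-p):p\ge q\}\cup\{(m-p,m+p):p\ge q+1\}$ the left-hand side vanishes, and since $\operatorname{supp}\xi(m-q-1,m+q+1)=\{(m+p,m-p):p\ge q+2\}\cup\{(m-p,m+p):p\ge q+1\}$ is contained in it, the right-hand side vanishes there as well. At $(m+q,m-q)$ the factor $\xi(m+1,m-1)-c$ is zero by the definition of $c$, so the left-hand side is $0$, and the right-hand side is $0$ because $(m+q,m-q)\notin\operatorname{supp}\xi(m-q-1,m+q+1)$. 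A short separate check at $(m+(q+1),m-(q+1))$, using $\lceil\frac{2}{2}\rceil=1$ in \eqref{eq:xi_plus_minus}, shows that $\xi(m+1,m-1)$ again takes the value $c$ there, so the left-hand side vanishes, matching the right-hand side (whose plus-support begins only at $p=q+2$).

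This leaves the two ranges on which both sides are genuinely nonzero: the plus-vertices $(m+p,m-p)$ with $p\ge q+2$ and the minus-vertices $(m-p,m+p)$ with $p\ge q+1$. On the plus side I set $s:=\lceil\frac{p-q-1}{2}\rceil$, so that $\lceil\frac{p-q+1}{2}\rceil=s+1$; then by \eqref{eq:xi_plus_minus} and \eqref{eq:xi_minus_plus} the products occurring in $\xi(m+q,m-q)(x)$ and $\xi(m-q-1,m+q+1)(x)$ are $\prod_{k=s}^{s+q-1}(-\alpha+k\delta)$ and $\prod_{k=s}^{s+q}(-\alpha+k\delta)$, differing by the single extra factor $-\alpha+(s+q)\delta$, while the binomial coefficients obey the Pascal ratio $(q+1)\binom{s+q}{q+1}=s\binom{s+q}{q}$. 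Cancelling the common binomial and product, the desired identity collapses to the purely linear statement $\xi(m+1,m-1)(x)-c=s\big(-\alpha+(s+q)\delta\big)$, which I verify by matching the $\alpha$- and $\delta$-coefficients in the two parities of $p$ (using $\lceil\frac{p-q-1}{2}\rceil=\lceil\frac{p-1}{2}\rceil-\frac{q-1}{2}$, valid since $q$ is odd). The minus side is handled identically with $t:=\lceil\frac{p-q}{2}\rceil$: there the products differ by the factor $\alpha+t\delta$, the binomials satisfy $(q+1)\binom{t+q}{q+1}=(t+q)\binom{t+q-1}{q}$, and the identity reduces to $\xi(m+1,m-1)(x)-c=(t+q)(\alpha+t\delta)$.

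The main obstacle is bookkeeping rather than conceptual: one must keep the ceiling functions, the product index ranges, and the binomial coefficients aligned across the two parities of $p$, and recognize that each generic identity factors as $(\text{common binomial})\times(\text{common product})\times(\text{one linear factor})$, so that it reduces to a single degree-one identity in $\alpha,\delta$. Once that reduction is in place, the Pascal-type ratios and the parity identities for the ceilings make every case immediate, and the hypothesis that $q$ is odd is precisely what forces the coefficient of $\xi(m+q+1,m-q-1)$ in \eqref{mult:odd+-+-} to be $0$ (and symmetrically the coefficient of $\xi(m-q-1,m+q+1)$ to vanish in \eqref{mult:odd-+-+}). Since the Knutson--Tao type classes form a $\bq[\alpha,\delta]$-basis by Proposition~\ref{ktclassformsbasis}, the vertexwise equalities established above yield the claimed relations in $H_T^*(X_m)$.
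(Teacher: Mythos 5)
Your proof is correct and follows essentially the same route as the paper's: a vertex-by-vertex verification in which the vertices outside the relevant supports, together with $(m+q,m-q)$ and $(m+q+1,m-q-1)$ (where oddness of $q$ forces $\xi(m+1,m-1)$ to take the common value $p^{(m+q,m-q)}_{(m+1,m-1)}$), are disposed of first, and the generic vertices are then handled by the same binomial and ceiling-function manipulations. Your cancellation down to a single linear identity is just a reorganization of the paper's computation, which instead multiplies up via $(n-r)\binom{n}{r}=(r+1)\binom{n}{r+1}$, $(n+1)\binom{n}{r}=(r+1)\binom{n+1}{r+1}$ and the identities of Remarks \ref{useful_rel_q_odd_i} and \ref{useful_rel_q_odd_ii}.

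Two repairable blemishes. First, do not invoke Theorem \ref{gen_in_cohom-ring} (nor, at the end, Proposition \ref{ktclassformsbasis}): in the paper's logical order that theorem is proved \emph{using} this very proposition, so your framing is circular as written. It is also unnecessary --- both sides of \eqref{mult:odd+-+-} and \eqref{mult:odd-+-+} are explicit functions $\cv\to\bq[\alpha,\delta]$ built from Definition \ref{ktclasses}, and the assertion is precisely that these functions agree at every vertex; no ring membership or basis property is needed to make pointwise verification legitimate. Second, your parenthetical ceiling identity $\lceil\tfrac{p-q-1}{2}\rceil=\lceil\tfrac{p-1}{2}\rceil-\tfrac{q-1}{2}$ fails when $p$ is even (for $q$ odd the correct statement is $\lceil\tfrac{p-q-1}{2}\rceil=\lceil\tfrac{p}{2}\rceil-\tfrac{q+1}{2}$); fortunately the identity you actually need, namely $\xi(m+1,m-1)(x)-c=s\bigl(-\alpha+(s+q)\delta\bigr)$ with $s=\lceil\tfrac{p-q-1}{2}\rceil$, is true for both parities of $p$, since $\xi(m+1,m-1)(x)-c$ factors as $\bigl(\lceil\tfrac{p}{2}\rceil-\lceil\tfrac{q}{2}\rceil\bigr)\bigl(-\alpha+(\lceil\tfrac{p}{2}\rceil+\lceil\tfrac{q}{2}\rceil-1)\delta\bigr)$, and for odd $q$ one has $\lceil\tfrac{p}{2}\rceil-\lceil\tfrac{q}{2}\rceil=s$ and $\lceil\tfrac{p}{2}\rceil+\lceil\tfrac{q}{2}\rceil-1=s+q$ unconditionally. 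With those two corrections your argument matches the paper's proof step for step.
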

	
	\begin{proof}
	\underline{Step 1:	Equality of  \eqref{mult:odd+-+-}}

Note that the right-hand side of \eqref{mult:odd+-+-}  vanishes when $x = (m+p, m-p)$ for $0 \leq p \leq q+1$, 
and when $x = (m-p, m+p)$ for $0 \leq p \leq q$. 

Furthermore, 	$\xi(m+q,m-q)(x)=0$ for $x = (m+p, m-p), 0 \leq p \leq q-1$, and  for $x = (m-p, m+p),0 \leq p \leq q$. Since $q$ is odd,
\[
\xi(m+1, m-1)\big((m+q, m-q)\big)
= p^{(m+q, m-q)}_{(m+1, m-1)}= \xi(m+1, m-1)\big((m+q+1, m-q-1)\big).
\]
It follows that the left-hand side of \eqref{mult:odd+-+-} also vanishes on each of these vertices. 

Next, we verify \eqref{mult:odd+-+-} by evaluating it on the remaining vertices $x \in \mathcal{V}$.

			\begin{enumerate}[(i)]
				\item if $x=(m+p,m-p), q+2\leq p\leq m$:
				\[\begin{split}
					\text{LHS}=&\binom{\lceil \frac{p-q+1}{2} \rceil + q-1}{\,q\,}\displaystyle\prod_{k=\lceil \frac{p-q+1}{2} \rceil -1}^{\lceil \frac{p-q+1}{2} \rceil +q-2}\Big(-\alpha +k\delta\Big)  \cdot    (\lceil \frac{p}{2}\rceil-\lceil \frac{q}{2}\rceil)    \Big(-\alpha +(\lceil \frac{p}{2}\rceil+\lceil \frac{q}{2}\rceil-1)\delta \Big)\\
					=& \displaystyle \lceil \frac{p-q-1}{2}\rceil \cdot\binom{\lceil \frac{p-q-1}{2} \rceil + q}{\,q\,}\prod_{k=\lceil \frac{p-q-1}{2} \rceil }^{\lceil \frac{p-q-1}{2} \rceil +q-1}\Big(-\alpha +k\delta\Big)  \cdot   \Big(-\alpha +(\lceil \frac{p-q-1}{2}\rceil+q)\delta \Big)  \text{ ~~[cf. Remark \ref{useful_rel_q_odd_ii}]} \\
					=&(q+1)	\binom{\lceil \frac{p-q-1}{2} \rceil + q}{\,q+1\,}\displaystyle\prod_{\lceil \frac{p-q-1}{2} \rceil}^{\lceil \frac{p-q-1}{2} \rceil +q}\Big(-\alpha +k\delta\Big)~~~~~ \text{\Big [since, $(n-r)\binom{n}{r}=(r+1)\binom{n}{r+1}$\Big]}\\
				=&\text{RHS}
				\end{split}\]

				\item if $x=(m-p,m+p), q+1\leq p\leq m$:	
				\[\begin{split}
					\text{LHS}	=& 		\displaystyle	\binom{\lceil \frac{p-q}{2} \rceil + q-1}{\,q\,}\prod_{k=\lceil \frac{p-q}{2} \rceil +1}^{\lceil \frac{p-q}{2} \rceil +q}\Big(\alpha +k\delta\Big)  \cdot  \Big(\lceil \frac{p-1}{2}\rceil+ \lceil \frac{q}{2}\rceil\Big)\Big(\alpha+(\lceil \frac{p-1}{2}\rceil- \lceil \frac{q}{2}\rceil+1)\delta\Big)\\
					=&	\displaystyle\Big( \lceil\frac{p-q}{2}\rceil+q\Big)	\binom{\lceil \frac{p-q}{2} \rceil + q-1}{\,q\,}\prod_{k=\lceil \frac{p-q}{2} \rceil +1}^{\lceil \frac{p-q}{2} \rceil +q}\Big(\alpha +k\delta\Big)  \cdot  \Big(\alpha+\lceil \frac{p-q}{2}\rceil\delta\Big) \text{ ~~~~~~[cf. Remark \ref{useful_rel_q_odd_i}]}  \\
					=&(q+1)	\displaystyle\binom{\lceil \frac{p-q}{2} \rceil+ q}{\,q+1\,}\prod_{k=\lceil \frac{p-q}{2} \rceil}^{\lceil \frac{p-q}{2} \rceil+q }\Big(\alpha+k\delta \Big)  ~~~~~ \text{\Big [since, $(n+1)\binom{n}{r}=(r+1)\binom{n+1}{r+1}$\Big]}  \\
					= &\text{RHS}
				\end{split}\]			
			\end{enumerate}

				\underline{Step 2:	Equality of  \eqref{mult:odd-+-+}}

Note that the right-hand side of \eqref{mult:odd-+-+}  vanishes when $x = (m+p, m-p)$ for $0 \leq p \leq q$, 
and when $x = (m-p, m+p)$ for $0 \leq p \leq q+1$.  

	Furthermore, 	$\xi(m-q,m+q)(x)=0$ for $x = (m+p, m-p), 0 \leq p \leq q$, and  for $x = (m-p, m+p),0 \leq p \leq q-1$. Since $q$ is odd,
\[
\xi(m-1, m+1)\big((m-q, m+q)\big)
= p^{(m-q, m+q)}_{(m-1, m+1)}= \xi(m-1, m+1)\big((m-q-1, m+q+1)\big)
\]
It follows that the left-hand side of \eqref{mult:odd-+-+} also vanishes on each of these vertices.  Next, we verify \eqref{mult:odd-+-+} by evaluating it on the remaining vertices $x \in \mathcal{V}$.

	\begin{enumerate}[(i)]
	\item if $x=(m+p,m-p), q+1\leq p\leq m$:
		\[\begin{split}
		\text{LHS}
		=& \displaystyle	\binom{\lceil \frac{p-q}{2} \rceil + q-1}{\,q\,}\prod_{k=\lceil \frac{p-q}{2} \rceil}^{\lceil \frac{p-q}{2} \rceil +q-1}\Big(-\alpha +k\delta\Big)  \cdot\Big( \lceil \frac{p-1}{2} \rceil +\lceil \frac{q}{2} \rceil\Big) \big(-\alpha +\big(\lceil \frac{p-1}{2} \rceil -\lceil \frac{q}{2} \rceil\big)\delta\big)     \\
		=&  \displaystyle       \big(\lceil\frac{p-q}{2}\rceil+q\big)           \binom{\lceil \frac{p-q}{2} \rceil + q-1}{\,q\,}              \prod_{k=\lceil \frac{p-q}{2} \rceil -1}^{\lceil \frac{p-q}{2} \rceil +q-1}\Big(-\alpha +k\delta\Big)        \text{ ~~~~~~[cf. Remark \ref{useful_rel_q_odd_i}]}  \\
		=&(q+1) \displaystyle	\binom{\lceil \frac{p-q}{2} \rceil + q}{\,q+1\,}\prod_{k=\lceil \frac{p-q}{2} \rceil -1}^{\lceil \frac{p-q}{2} \rceil +q-1}\Big(-\alpha +k\delta\Big)  ~~~~~~~~~ \text{\Big [since, $(n+1)\binom{n}{r}=(r+1)\binom{n+1}{r+1}$\Big]}   \\
		=&\text{RHS}
	\end{split}\]
	
		\item if $x=(m-p,m+p), q+2\leq p\leq m$:
		\[\begin{split}
			\text{LHS}=&\displaystyle\binom{\lceil \frac{p-q+1}{2} \rceil+ q-1}{\,q\,}\prod_{k=\lceil \frac{p-q+1}{2} \rceil}^{\lceil \frac{p-q+1}{2} \rceil+q-1 }\Big(\alpha+k\delta \Big)\cdot  \big( \lceil\frac{p}{2}\rceil-\lceil\frac{q}{2}\rceil \big)\Big(\alpha+\big(\lceil\frac{p}{2}\rceil+\lceil\frac{q}{2}\rceil \big)\delta \Big)\\
			=&\displaystyle \lceil \frac{p-q-1}{2} \rceil\binom{\lceil \frac{p-q-1}{2} \rceil+q }{\,q\,}\prod_{k=\lceil \frac{p-q-1}{2} \rceil+1}^{\lceil \frac{p-q-1}{2} \rceil+q }\Big(\alpha+k\delta \Big)\cdot  \Big(\alpha+\big(\lceil \frac{p-q-1}{2} \rceil +q+1 \big)\delta \Big)       \text{ ~~[cf. Remark \ref{useful_rel_q_odd_ii}]} \\
			=&(q+1)\displaystyle	\binom{\lceil \frac{p-q-1}{2} \rceil + q}{\,q+1\,}\prod_{k=\lceil \frac{p-q-1}{2} \rceil +1}^{\lceil \frac{p-q-1}{2} \rceil +q+1}\Big(\alpha +k\delta\Big)  ~~~~~~~~~  ~~~~~ \text{\Big [since, $(n-r)\binom{n}{r}=(r+1)\binom{n}{r+1}$\Big]}   \\
			=&\text{RHS}
		\end{split}\]
\end{enumerate}
	\end{proof}

\begin{rema}\label{useful_rel_q_odd_ii}
	When $q$ is odd, the following equalities are applied in case (i), second line, of the proof of \eqref{mult:odd+-+-} and in case (ii), second line, of the proof of \eqref{mult:odd-+-+}.

		\[\lceil \frac{p-q+1}{2} \rceil-1 =\lceil \frac{p-q-1}{2} \rceil ; \qquad \lceil\frac{p}{2} \rceil+\lceil\frac{q}{2}\rceil=\lceil\frac{p-q-1}{2}\rceil+q+1,\qquad \lceil\frac{p}{2} \rceil-\lceil\frac{q}{2}\rceil=\lceil\frac{p-q-1}{2}\rceil.\] 
\end{rema}	

	\begin{rema}\label{useful_rel_q_odd_i}
		When $q$ is odd, the following equalities are applied in case (ii), second line, of the proof of \eqref{mult:odd+-+-} and in case (i), second line, of the proof of \eqref{mult:odd-+-+}.

		\[\lceil \frac{p-1}{2}\rceil- \lceil \frac{q}{2}\rceil+1=\lceil\frac{p-q}{2}\rceil,\qquad \lceil \frac{p-1}{2}\rceil+ \lceil \frac{q}{2}\rceil=\lceil\frac{p-q}{2}\rceil+q.\]
	\end{rema}

\begin{coro} \label{rep:deg4}
	We have the following relations:
	\begin{equation}\label{m+2,m-2}
		2\xi(m+2,m-2)=\xi(m-1,m+1)\Big(\xi(m-1,m+1)-p^{(m-1,m+1)}_{(m-1,m+1)}\Big)
	\end{equation}
	\begin{equation}\label{m-2,m+2}
		2\xi(m-2,m+2)=\xi(m+1,m-1)\Big(\xi(m+1,m-1)-p^{(m+1,m-1)}_{(m+1,m-1)}\Big)\end{equation}
\end{coro}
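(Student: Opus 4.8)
The plan is to read off both identities as the $q=1$ instance of Proposition~\ref{mult:q_odd_propo}. Since $q=1$ is odd, it satisfies the hypothesis $0<q<m$ precisely when $m\geq 2$; and for $m=1$ the vertex $(m+2,m-2)$ does not exist in $\cv$, so the statement is vacuous there. Thus I would assume $m\geq 2$, which makes both multiplicative relations \eqref{mult:odd+-+-} and \eqref{mult:odd-+-+} available at $q=1$.

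First I would substitute $q=1$ into \eqref{mult:odd-+-+}. Its right-hand side collapses to $(q+1)\,\xi(m+2,m-2)+0\cdot\xi(m-2,m+2)=2\,\xi(m+2,m-2)$, while its left-hand side is exactly $\xi(m-1,m+1)\big(\xi(m-1,m+1)-p^{(m-1,m+1)}_{(m-1,m+1)}\big)$, because the constant $p^{(m-q,m+q)}_{(m-1,m+1)}$ becomes $p^{(m-1,m+1)}_{(m-1,m+1)}$ when $q=1$. This is precisely \eqref{m+2,m-2}. Symmetrically, substituting $q=1$ into \eqref{mult:odd+-+-} makes its right-hand side $0\cdot\xi(m+2,m-2)+(q+1)\,\xi(m-2,m+2)=2\,\xi(m-2,m+2)$, and its left-hand side $\xi(m+1,m-1)\big(\xi(m+1,m-1)-p^{(m+1,m-1)}_{(m+1,m-1)}\big)$, where $p^{(m+q,m-q)}_{(m+1,m-1)}$ becomes $p^{(m+1,m-1)}_{(m+1,m-1)}$; this yields \eqref{m-2,m+2}.

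The only step demanding attention is the index bookkeeping, namely verifying that upon setting $q=1$ the edge-label constants of the proposition ($p^{(m-q,m+q)}_{(m-1,m+1)}$ and $p^{(m+q,m-q)}_{(m+1,m-1)}$) collapse exactly to the diagonal constants $p^{(m-1,m+1)}_{(m-1,m+1)}$ and $p^{(m+1,m-1)}_{(m+1,m-1)}$ appearing in the corollary; this is immediate since $m\pm q=m\pm1$ at $q=1$. Consequently there is no genuine obstacle: the result is a direct specialization of Proposition~\ref{mult:q_odd_propo}, and no computation beyond this matching of indices is required.
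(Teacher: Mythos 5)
Your proof is correct and is essentially the paper's own proof: both simply set $q=1$ in Proposition~\ref{mult:q_odd_propo}. One remark: your pairing is the accurate one --- \eqref{m+2,m-2} comes from \eqref{mult:odd-+-+} and \eqref{m-2,m+2} from \eqref{mult:odd+-+-} --- whereas the paper's one-line proof cites the two relations in the opposite (swapped) order, a harmless typo that your bookkeeping correctly avoids.
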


\begin{proof}
By setting $q=1$, \eqref{m+2,m-2} follows from \eqref{mult:odd+-+-}, and \eqref{m-2,m+2} follows from \eqref{mult:odd-+-+}.
\end{proof}

\begin{propo}\label{mult:q_even_propo}
For each even integer $q$ satisfying $0<q < m$, we have the following multiplicative relations:
	\begin{equation}\label{mult:even+--+}
		\xi(m+q,m-q)\Big(\xi(m-1,m+1)- p^{(m+q,m-q)}_{(m-1,m+1)}\Big)=0\cdot\xi(m+q+1,m-q-1)+(q+1)\cdot\xi(m-q-1,m+q+1) 
	\end{equation}
	\begin{equation}\label{mult:even-++-}
		\xi(m-q,m+q)\Big(\xi(m+1,m-1)-p^{(m-q,m+q)}_{(m+1,m-1)}\Big)=(q+1)\cdot\xi(m+q+1,m-q-1)+0\cdot\xi(m-q-1,m+q+1)
	\end{equation}
\end{propo}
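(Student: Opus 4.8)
The plan is to follow the same two-step strategy as in the proof of Proposition~\ref{mult:q_odd_propo}, with the roles of the two degree-two generators $\xi(m+1,m-1)$ and $\xi(m-1,m+1)$ interchanged. This swap is forced by parity: on the left-hand side the correction factor must vanish at the two ``threshold'' vertices $(m+q,m-q)$ and $(m+q+1,m-q-1)$ in \eqref{mult:even+--+} (respectively $(m-q,m+q)$ and $(m-q-1,m+q+1)$ in \eqref{mult:even-++-}), and for even $q$ it is $\xi(m-1,m+1)$ (respectively $\xi(m+1,m-1)$) that takes equal values there. Concretely, from \eqref{eq:xi_minus_plus} with generator index $1$ one computes, for even $q$,
\[
\xi(m-1,m+1)\big((m+q,m-q)\big)=\tfrac{q}{2}\big(-\alpha+\tfrac{q}{2}\delta\big)=\xi(m-1,m+1)\big((m+q+1,m-q-1)\big),
\]
and symmetrically $\xi(m+1,m-1)$ matches on the two ``minus'' threshold vertices; this is the even analogue of the matching identity used in Step~1 of Proposition~\ref{mult:q_odd_propo}.

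For Step~1 I would first record that the right-hand sides of \eqref{mult:even+--+} and \eqref{mult:even-++-} vanish on all vertices below the threshold of the degree-$(q+1)$ classes, and that $\xi(m+q,m-q)$ (respectively $\xi(m-q,m+q)$) vanishes below its own threshold, exactly as in the odd case. Combined with the matching-value identity above, this shows that both sides agree and equal zero on every vertex $(m\pm p,m\mp p)$ with $p\le q+1$.

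For Step~2 I would evaluate both sides on the remaining vertices. The crucial simplification is that the correction factor telescopes to a single linear form: writing $a=\lceil\frac{p-1}{2}\rceil$ and $b=\frac{q}{2}$, on a ``plus'' vertex one has
\[
a\big(-\alpha+a\delta\big)-b\big(-\alpha+b\delta\big)=(a-b)\big(-\alpha+(a+b)\delta\big),
\]
and on a ``minus'' vertex the mixed-sign difference factors analogously as $(d+b)\big(\alpha+(d-b)\delta\big)$ with $d=\lceil\frac p2\rceil$. Substituting the explicit formulas of Definition~\ref{ktclasses}, the remaining product of linear forms in $\xi(m+q,m-q)$ absorbs this extra factor, shifting the product range by one and bumping the binomial coefficient; the two identities $(n-r)\binom{n}{r}=(r+1)\binom{n}{r+1}$ and $(n+1)\binom{n}{r}=(r+1)\binom{n+1}{r+1}$ then produce the coefficient $(q+1)$ together with the degree-$(q+1)$ class on the right, precisely as in the odd computation.

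The main obstacle, and the only genuine departure from Proposition~\ref{mult:q_odd_propo}, is bookkeeping the parity-dependent ceiling identities: for even $q$ one uses $\lceil\frac q2\rceil=\frac q2\in\bz$, which gives the even analogues of Remarks~\ref{useful_rel_q_odd_ii} and~\ref{useful_rel_q_odd_i}, namely $\lceil\frac{p-q\pm 1}{2}\rceil=\lceil\frac{p\pm1}{2}\rceil-\frac q2$, $\lceil\frac{p-1}{2}\rceil=\lceil\frac{p-q-1}{2}\rceil+\frac q2$, and $\lceil\frac p2\rceil=\lceil\frac{p-q}{2}\rceil+\frac q2$. Once these are in hand the verification is routine, and in fact slightly cleaner than the odd case since $\frac q2$ is an integer; I expect the bulk of the effort to lie simply in checking the remaining vertices case by case and confirming that all vertices not covered by Steps~1 and~2 make both sides vanish identically.
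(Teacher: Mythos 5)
Your proposal follows essentially the same two-step route as the paper's own proof: the same parity-forced matching identity
\[
\xi(m-1,m+1)\big((m+q,m-q)\big)=p^{(m+q,m-q)}_{(m-1,m+1)}=\xi(m-1,m+1)\big((m+q+1,m-q-1)\big)
=\tfrac{q}{2}\big(-\alpha+\tfrac{q}{2}\delta\big)
\]
(and its mirror for $\xi(m+1,m-1)$), the same telescoping factorizations $a(-\alpha+a\delta)-b(-\alpha+b\delta)=(a-b)(-\alpha+(a+b)\delta)$ and $d(\alpha+d\delta)-b(-\alpha+b\delta)=(d+b)(\alpha+(d-b)\delta)$, the same two binomial identities, and ceiling identities that are exactly Remarks~\ref{useful_rel_q_even_ii} and~\ref{useful_rel_q_even_i} rewritten using $\lceil\tfrac{q-1}{2}\rceil=\tfrac q2$ for even $q$. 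All of these ingredients are correct.

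There is, however, one off-by-one slip that must be repaired before the plan compiles into a proof. Your Step~1 conclusion that ``both sides agree and equal zero on every vertex $(m\pm p,m\mp p)$ with $p\le q+1$'' is false at one vertex: for \eqref{mult:even+--+} the common vanishing region is $p\le q+1$ on the vertices $(m+p,m-p)$ but only $p\le q$ on the vertices $(m-p,m+p)$. At $(m-q-1,m+q+1)$ the right-hand side equals $(q+1)\,\xi(m-q-1,m+q+1)\big((m-q-1,m+q+1)\big)\neq 0$, and indeed both sides equal $(q+1)\prod_{k=1}^{q+1}(\alpha+k\delta)$ there; the analogous statement holds at $(m+q+1,m-q-1)$ for \eqref{mult:even-++-}. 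Consequently Step~2 must run over the ``minus'' vertices starting at $p=q+1$ (as the paper does in its case (ii), range $q+1\le p\le m$), not at $p=q+2$; if one reads your Step~1/Step~2 split literally, this vertex is claimed to be a common zero (false) and then excluded from the verification, leaving the identity unproved there. The factorization you give for minus vertices handles this vertex with no modification, so the fix is purely a matter of correcting the stated ranges.
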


	\begin{proof}
\underline{Step 1:	Equality of  \eqref{mult:even+--+}} 
	
	Note that the right-hand side of \eqref{mult:even+--+} vanishes when $x = (m+p, m-p)$ for $0 \leq p \leq q+1$, 
	and when $x = (m-p, m+p)$ for $0 \leq p \leq q$.  
	
	Furthermore, 	$\xi(m+q,m-q)(x)=0$ for $x = (m+p, m-p), 0 \leq p \leq q-1$, and  for $x = (m-p, m+p),0 \leq p \leq q$. 	Since $q$ is even,
	\[
	\xi(m-1, m+1)\big((m+q, m-q)\big)= p^{(m+q, m-q)}_{(m-1, m+1)}
	= \xi(m-1, m+1)\big((m+q+1, m-q-1)\big),
	\]
	it follows that the left-hand side of \eqref{mult:even+--+} also vanishes on each of these vertices. 
	
	Next, we verify  \eqref{mult:even+--+} by evaluating it on the remaining vertices $x \in \mathcal{V}$.

	\begin{enumerate}[(i)]
		
		\item if $x=(m+p,m-p), q+2\leq p\leq m$:
		\[\begin{split}
			\text{LHS}=&\binom{\lceil \frac{p-q+1}{2} \rceil + q-1}{\,q\,}\displaystyle\prod_{k=\lceil \frac{p-q+1}{2} \rceil -1}^{\lceil \frac{p-q+1}{2} \rceil +q-2}\Big(-\alpha +k\delta\Big)  \cdot    (\lceil \frac{p-1}{2}\rceil-\lceil \frac{q-1}{2}\rceil)    \Big(-\alpha +(\lceil \frac{p-1}{2}\rceil+\lceil \frac{q-1}{2}\rceil)\delta \Big)\\
			=& \displaystyle \lceil \frac{p-q-1}{2}\rceil \cdot\binom{\lceil \frac{p-q-1}{2} \rceil + q}{\,q\,}\prod_{k=\lceil \frac{p-q-1}{2} \rceil }^{\lceil \frac{p-q-1}{2} \rceil +q-1}\Big(-\alpha +k\delta\Big)  \cdot   \Big(-\alpha +(\lceil \frac{p-q-1}{2}\rceil+q)\delta \Big)  \text{ ~~[cf. Remark \ref{useful_rel_q_even_ii}]} \\
			=&(q+1)	\binom{\lceil \frac{p-q-1}{2} \rceil + q}{\,q+1\,}\displaystyle\prod_{\lceil \frac{p-q-1}{2} \rceil}^{\lceil \frac{p-q-1}{2} \rceil +q}\Big(-\alpha +k\delta\Big)~~~~~ \text{\Big [since, $(n-r)\binom{n}{r}=(r+1)\binom{n}{r+1}$\Big]}\\
			=&\text{RHS}
		\end{split}\]

		\item if $x=(m-p,m+p), q+1\leq p\leq m$:	
		\[\begin{split}
			\text{LHS}=& 		\displaystyle	\binom{\lceil \frac{p-q}{2} \rceil + q-1}{\,q\,}\prod_{k=\lceil \frac{p-q}{2} \rceil +1}^{\lceil \frac{p-q}{2} \rceil +q}\Big(\alpha +k\delta\Big)  \cdot  \Big(\lceil \frac{p}{2}\rceil+\lceil \frac{q-1}{2}\rceil\Big)\Big(\alpha+(\lceil \frac{p}{2}\rceil- \lceil \frac{q-1}{2}\rceil)\delta\Big)\\
			=&	\displaystyle\Big( \lceil\frac{p-q}{2}\rceil+q\Big)	\binom{\lceil \frac{p-q}{2} \rceil + q-1}{\,q\,}\prod_{k=\lceil \frac{p-q}{2} \rceil +1}^{\lceil \frac{p-q}{2} \rceil +q}\Big(\alpha +k\delta\Big)  \cdot  \Big(\alpha+\lceil \frac{p-q}{2}\rceil\delta\Big) \text{ ~~~~~~[cf. Remark \ref{useful_rel_q_even_i}]}  \\
			=&(q+1)	\displaystyle\binom{\lceil \frac{p-q}{2} \rceil+ q}{\,q+1\,}\prod_{k=\lceil \frac{p-q}{2} \rceil}^{\lceil \frac{p-q}{2} \rceil+q }\Big(\alpha+k\delta \Big)  ~~~~~ \text{\Big [since, $(n+1)\binom{n}{r}=(r+1)\binom{n+1}{r+1}$\Big]}  \\
			= &\text{RHS}
		\end{split}\]			
	\end{enumerate}
	
	\underline{Step 2:	Equality of  \eqref{mult:even-++-}} 
	
	Note that the right-hand side of \eqref{mult:even-++-}  vanishes when $x = (m+p, m-p)$ for $0 \leq p \leq q$, 
	and when $x = (m-p, m+p)$ for $0 \leq p \leq q+1$. 
	
	Furthermore, 	$\xi(m-q,m+q)(x)=0$ for $x = (m+p, m-p), 0 \leq p \leq q$, and  for $x = (m-p, m+p),0 \leq p \leq q-1$. Since $q$ is even,
	\[\xi(m+1, m-1)\big((m-q, m+q)\big)=p^{(m-q, m+q)}_{(m+1, m-1)}	= \xi(m+1, m-1)\big((m-q-1, m+q+1)\big).\]
	It follows that the left-hand side of \eqref{mult:even-++-} also vanishes on each of those vertices. 	Next, we verify \eqref{mult:even-++-} by evaluating it on the remaining vertices $x \in \mathcal{V}$.

	\begin{enumerate}[(i)]
		\item if $x=(m+p,m-p), q+1\leq p\leq m$:
		\[\begin{split}
			\text{LHS}=& \displaystyle	\binom{\lceil \frac{p-q}{2} \rceil + q-1}{\,q\,}\prod_{k=\lceil \frac{p-q}{2} \rceil}^{\lceil \frac{p-q}{2} \rceil +q-1}\Big(-\alpha +k\delta\Big)  \cdot\Big( \lceil \frac{p}{2} \rceil +\lceil \frac{q-1}{2} \rceil\Big) \big(-\alpha +\big(\lceil \frac{p}{2} \rceil -\lceil \frac{q-1}{2} \rceil -1\big)\delta\big)     \\
			=&  \displaystyle       \big(\lceil\frac{p-q}{2}\rceil+q\big)           \binom{\lceil \frac{p-q}{2} \rceil + q-1}{\,q\,}              \prod_{k=\lceil \frac{p-q}{2} \rceil -1}^{\lceil \frac{p-q}{2} \rceil +q-1}\Big(-\alpha +k\delta\Big)        \text{ ~~~~~~[cf. Remark \ref{useful_rel_q_even_i}]}  \\
			=&(q+1) \displaystyle	\binom{\lceil \frac{p-q}{2} \rceil + q}{\,q+1\,}\prod_{k=\lceil \frac{p-q}{2} \rceil -1}^{\lceil \frac{p-q}{2} \rceil +q-1}\Big(-\alpha +k\delta\Big)  ~~~~~~~~~ \text{\Big [since, $(n+1)\binom{n}{r}=(r+1)\binom{n+1}{r+1}$\Big]}   \\
			=&\text{RHS}
		\end{split}\]
		
		\item if $x=(m-p,m+p), q+2\leq p\leq m$:
		\[\begin{split}
			\text{LHS}=&\displaystyle\binom{\lceil \frac{p-q+1}{2} \rceil+ q-1}{\,q\,}\prod_{k=\lceil \frac{p-q+1}{2} \rceil}^{\lceil \frac{p-q+1}{2} \rceil+q-1 }\Big(\alpha+k\delta \Big)\cdot  \big( \lceil\frac{p-1}{2}\rceil-\lceil\frac{q-1}{2}\rceil \big)\Big(\alpha+\big(\lceil\frac{p-1}{2}\rceil+\lceil\frac{q-1}{2}\rceil +1\big)\delta \Big)\\
			=&\displaystyle \lceil \frac{p-q-1}{2} \rceil\binom{\lceil \frac{p-q-1}{2} \rceil+q }{\,q\,}\prod_{k=\lceil \frac{p-q-1}{2} \rceil+1}^{\lceil \frac{p-q-1}{2} \rceil+q }\Big(\alpha+k\delta \Big)\cdot  \Big(\alpha+\big(\lceil \frac{p-q-1}{2} \rceil +q+1 \big)\delta \Big)       \text{ ~~[cf. Remark \ref{useful_rel_q_even_ii}]} \\
			=&(q+1)\displaystyle	\binom{\lceil \frac{p-q-1}{2} \rceil + q}{\,q+1\,}\prod_{k=\lceil \frac{p-q-1}{2} \rceil +1}^{\lceil \frac{p-q-1}{2} \rceil +q+1}\Big(\alpha +k\delta\Big)  ~~~~~~~~~  ~~~~~ \text{\Big [since, $(n-r)\binom{n}{r}=(r+1)\binom{n}{r+1}$\Big]}   \\
			=&\text{RHS}
		\end{split}\] 	
	\end{enumerate}\end{proof}

\begin{rema}\label{useful_rel_q_even_ii} 
	When $q$ is even, the following equalities are applied in case (i), second line, of the proof of \eqref{mult:even+--+} and in case (ii), second line, of the proof of \eqref{mult:even-++-}.
	\[\lceil \frac{p-q+1}{2} \rceil-1 =\lceil \frac{p-q-1}{2} \rceil ; \qquad \lceil\frac{p-1}{2} \rceil+\lceil\frac{q-1}{2}\rceil=\lceil\frac{p-q-1}{2}\rceil+q,\qquad \lceil\frac{p-1}{2} \rceil-\lceil\frac{q-1}{2}\rceil=\lceil\frac{p-q-1}{2}\rceil.\] 
\end{rema}	

\begin{rema}\label{useful_rel_q_even_i} 
	When $q$ is even, the following equalities are applied in case (ii), second line, of the proof of \eqref{mult:even+--+} and in case (i), second line, of the proof of \eqref{mult:even-++-}.
	\[\lceil \frac{p}{2}\rceil- \lceil \frac{q-1}{2}\rceil=\lceil\frac{p-q}{2}\rceil,\qquad \lceil \frac{p}{2}\rceil+ \lceil \frac{q-1}{2}\rceil=\lceil\frac{p-q}{2}\rceil+q.\]
\end{rema}

\begin{proof}[Proof of Theorem \ref{gen_in_cohom-ring}]\label{abcd}
Any constant map (e.g. $\xi(m,m)$) trivially satisfies the congruence relation for  any edge in $\mathcal{G}_m$, and hence is an element of $H^*_T(X_m)$. 

Next, we show that $\xi(m+1,m-1)$ and $\xi(m-1,m+1)$ belong to $H^*_T(X_m)$. We verify this for $\xi(m+1,m-1)$ by checking, case by case, that it satisfies the congruence relation \eqref{def:eqcohom} for all edges in $\mathcal{G}_m$ (see \eqref{label:edge} for the edge labels). The argument for $\xi(m-1,m+1)$ is similar.


\begin{enumerate}[(i)]

\item if $x=(m+p,m-p)$ and $y=(m+l,m-l)$, $1\leq l<p\leq m$,  $p-l \text{ is odd}$:
	\[	\begin{split}
		\xi(m+1,m-1)(x)-\xi(m+1,m-1)(y) &=  (\lceil \frac{p}{2}\rceil-\lceil \frac{l}{2}\rceil)    \Big(-\alpha +(\lceil \frac{p}{2}\rceil+\lceil \frac{l}{2}\rceil-1)\delta \Big)\\
		&\equiv 0 \text{ mod } 	(p-l)\big(-\alpha+\frac{(p+l-1)}{2}\delta \big). 
		\end{split}\]
		
\item if $x=(m+p,m-p)$ and $y=(m,m)$, $1\leq p\leq m$,  $p \text{ is odd}$:
\[
	\xi(m+1,m-1)(x)-\xi(m+1,m-1)(y) =  \lceil \frac{p}{2}\rceil    \big(-\alpha +(\lceil \frac{p}{2}\rceil-1)\delta \big)
	\equiv 0 \text{ mod } 	p\big(-\alpha+\frac{(p-1)}{2}\delta \big).\]

\item if $x=(m+p,m-p)$ and $y=(m-1,m+1)$, $1\leq p\leq m$,  $p -1 \text{ is odd}$:
\[	\begin{split}
	\xi(m+1,m-1)(x)-\xi(m+1,m-1)(y) &=  \lceil \frac{p}{2}\rceil    \big(-\alpha +(\lceil \frac{p}{2}\rceil-1)\delta \big)
	\equiv 0 \text{ mod } 	(p+1)\big(-\alpha+\frac{(p-2)}{2}\delta \big).
\end{split}\]

\item if $x=(m+p,m-p)$ and $y=(m-l,m+l)$, $2\leq l<p\leq m$,  $p-l \text{ is odd}$:
	\[	\begin{split}
		\xi(m+1,m-1)(x)-\xi(m+1,m-1)(y) &=  (\lceil \frac{p}{2}\rceil+\lceil \frac{l-1}{2}\rceil)    \Big(-\alpha +(\lceil \frac{p}{2}\rceil-\lceil \frac{l-1}{2}\rceil-1)\delta \Big)\\
		&\equiv 0 \text{ mod } 	(p+l)\big(-\alpha+\frac{(p-l-1)}{2}\delta \big) 
	\end{split}\]

	\item if $x=(m-p,m+p)$ and $y=(m-l,m+l)$, $2\leq l<p\leq m$,  $p-l \text{ is odd}$:
	\[	\begin{split}
		\xi(m+1,m-1)(x)-\xi(m+1,m-1)(y) &=  (\lceil \frac{p-1}{2}\rceil-\lceil \frac{l-1}{2}\rceil)    \Big(\alpha +(\lceil \frac{p-1}{2}\rceil+\lceil \frac{l-1}{2}\rceil+1)\delta \Big)\\
		&\equiv 0 \text{ mod } 	(p-l)\big(\alpha+\frac{(p+l+1)}{2}\delta \big). 
	\end{split}\]

	\item if $x=(m-p,m+p)$ and $y=(m+l,m-l)$, $1\leq l<p\leq m$,  $p-l \text{ is odd}$:
		\[	\begin{split}
		\xi(m+1,m-1)(x)-\xi(m+1,m-1)(y) &=  (\lceil \frac{p-1}{2}\rceil-\lceil \frac{l}{2}\rceil)    \Big(\alpha +(\lceil \frac{p-1}{2}\rceil-\lceil \frac{l}{2}\rceil+1)\delta \Big)\\
		&\equiv 0 \text{ mod } 	(p+l)\big(\alpha+\frac{(p-l+1)}{2}\delta \big). 
	\end{split}\]
	
	\item if $x=(m-p,m+p)$ and $y=(m-1,m+1)$, $2\leq p\leq m$,  $p-1 \text{ is odd}$:
	\[
	\xi(m+1,m-1)(x)-\xi(m+1,m-1)(y) =  \lceil \frac{p-1}{2}\rceil    \big(\alpha +(\lceil \frac{p-1}{2}\rceil-1)\delta \big)
	\equiv 0 \text{ mod } 	(p-1)\big(\alpha+\frac{(p+2)}{2}\delta \big).\]
	
	\item if $x=(m-p,m+p)$ and $y=(m,m)$, $2\leq p\leq m$,  $p \text{ is odd}$:
	\[
\xi(m+1,m-1)(x)-\xi(m+1,m-1)(y) =  \lceil \frac{p-1}{2}\rceil    \big(\alpha +(\lceil \frac{p-1}{2}\rceil-1)\delta \big)
\equiv 0 \text{ mod } 	p\big(\alpha+\frac{(p+1)}{2}\delta \big).\]
\end{enumerate}
Since $H^*_T(X)$ has a ring structure and $\xi(m+1,m-1),~\xi(m-1,m+1)\in H^*_T(X)$, it follows from Corollary~\ref{rep:deg4} that $\xi(m+2,m-2)$ and $\xi(m-2,m+2)$ also belong to $H^*_T(X_m)$.

Proceeding inductively in the same manner, we conclude that $\xi(m+q,m-q)$ and $\xi(m-q,m+q)$ belong to $H^*_T(X_m)$ for all $1\leq q\leq m$, by using Propositions~\ref{mult:q_odd_propo} and~\ref{mult:q_even_propo}.
\end{proof}

The following propositions are not essential for our discussion, so their proofs are omitted to avoid unnecessary length and are left to the reader.
	\begin{propo} 
For each even integer $q$ satisfying $0 < q < m$, we have the following multiplicative relations:
	\begin{equation}
		\xi(m+q,m-q)\Big(\xi(m+1,m-1)- p^{(m+q,m-q)}_{(m+1,m-1)}\Big)=1\cdot\xi(m+q+1,m-q-1)+q\cdot\xi(m-q-1,m+q+1)
	\end{equation}
	\begin{equation}
		\xi(m-q,m+q)\Big(\xi(m-1,m+1)-p^{(m-q,m+q)}_{(m-1,m+1)}\Big)=q\cdot\xi(m+q+1,m-q-1)+1\cdot\xi(m-q-1,m+q+1)
	\end{equation}
\end{propo}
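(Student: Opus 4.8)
The plan is to establish both relations exactly in the spirit of Propositions~\ref{mult:q_odd_propo} and~\ref{mult:q_even_propo}, namely by evaluating each side at every vertex $x\in\cv$ using the closed formulas~\eqref{eq:xi_plus_minus} and~\eqref{eq:xi_minus_plus}. First I would observe that both sides are genuine elements of $H^*_T(X_m)$: the factors $\xi(m+q,m-q)$ and $\xi(m+1,m-1)$ and the two right‑hand classes all lie in $H^*_T(X_m)$ by Theorem~\ref{gen_in_cohom-ring}, and $H^*_T(X_m)$ is a ring. Since the localization map $H^*_T(X_m)\hookrightarrow\prod_{x\in\cv}\bq[\alpha,\delta]$ of~\eqref{def:eqcohom} is injective, it suffices to check equality of the two resulting functions $\cv\to\bq[\alpha,\delta]$ pointwise. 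I would prove the first relation in full and deduce the second from the involution of the moment graph $\cg_m$ that exchanges the arms $(m+p,m-p)\leftrightarrow(m-p,m+p)$ together with $\alpha\mapsto-\alpha$, under which $\xi(m\pm q,m\mp q)$ and $\xi(m\pm1,m\mp1)$ are interchanged and the $(1,q)$ pattern becomes the $(q,1)$ pattern, turning the first relation verbatim into the second.

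Next I would dispose of the vertices on which both sides vanish. The classes $\xi(m+q+1,m-q-1)$ and $\xi(m-q-1,m+q+1)$ are supported only above $(m+q+1,m-q-1)$ and $(m-q-1,m+q+1)$, so the right‑hand side vanishes on $(m+p,m-p)$ and $(m-p,m+p)$ for $p\le q$; and $\xi(m+q,m-q)$ vanishes on $(m+p,m-p)$ for $p\le q-1$ and on $(m-p,m+p)$ for $p\le q$. At the diagonal vertex $(m+q,m-q)$ the factor $\xi(m+1,m-1)-p^{(m+q,m-q)}_{(m+1,m-1)}$ is $0$ by the very definition of $p^{(m+q,m-q)}_{(m+1,m-1)}$, so the left‑hand side vanishes there as well.

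The decisive difference from the odd case~\eqref{mult:odd+-+-} shows up at $(m+q+1,m-q-1)$. Because $q$ is \emph{even}, $\lceil\tfrac{q+1}{2}\rceil=\lceil\tfrac q2\rceil+1\neq\lceil\tfrac q2\rceil$, so $\xi(m+1,m-1)$ does not take equal values at $(m+q,m-q)$ and $(m+q+1,m-q-1)$; hence the left‑hand side no longer vanishes at $(m+q+1,m-q-1)$. Evaluating there, using $\xi(m+q,m-q)(m+q+1,m-q-1)=\prod_{k=0}^{q-1}(-\alpha+k\delta)$ and $\xi(m+q+1,m-q-1)(m+q+1,m-q-1)=\prod_{k=0}^{q}(-\alpha+k\delta)$, pins the coefficient of $\xi(m+q+1,m-q-1)$ to be exactly $1$, and the symmetric evaluation at $(m-q-1,m+q+1)$ forces the coefficient of $\xi(m-q-1,m+q+1)$ to be $q$. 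This is precisely the $(1,q)$ pattern in place of the $(0,q+1)$ pattern of the odd case. It then remains to check the remaining vertices $(m+p,m-p)$, $p\ge q+2$, and $(m-p,m+p)$, $p\ge q+1$, splitting on the parity of $p$. The new feature relative to~\eqref{mult:even+--+} (where only one top class survives) is that both top classes now contribute, so one must show that a single product on the left equals a sum of two binomial‑times‑product expressions. Using the elementary even‑$q$ ceiling identities (e.g. $\lceil\tfrac p2\rceil-\lceil\tfrac q2\rceil=\lceil\tfrac{p-q}{2}\rceil$ and $\lceil\tfrac p2\rceil+\lceil\tfrac q2\rceil-1=\lceil\tfrac{p-q}{2}\rceil+q-1$, analogues of Remarks~\ref{useful_rel_q_even_i} and~\ref{useful_rel_q_even_ii}) I would put all terms over the common product $\prod_{k=\lceil(p-q)/2\rceil}^{\lceil(p-q)/2\rceil+q-1}(-\alpha+k\delta)$; the two right‑hand terms then combine, and the identity collapses to the absorption relations $(n-r)\binom nr=(r+1)\binom{n}{r+1}$ and $(n+1)\binom nr=(r+1)\binom{n+1}{r+1}$ already used in the paper, concretely $(q+1)\binom{B+q}{q+1}=B\binom{B+q}{q}$ with $B=\lceil\tfrac{p-q}{2}\rceil$.

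I expect the main obstacle to be precisely this last reconciliation: because two top‑degree classes appear on the right, their contributions $\xi(m+q+1,m-q-1)$ and $\xi(m-q-1,m+q+1)$ must be added, and their product ranges shift with the parity of $p$, forcing the two‑subcase split and careful tracking of the indices $\lceil\tfrac{p-q+1}{2}\rceil$, $\lceil\tfrac{p-q}{2}\rceil$, and $\lceil\tfrac{p-q-1}{2}\rceil$. Once the correct common product is factored out, the combinatorial core reduces to the same binomial absorption identity used throughout, so no genuinely new phenomenon arises beyond this bookkeeping.
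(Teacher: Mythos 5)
Your overall strategy is sound and, for the record, it is exactly the method the paper uses for the analogous Propositions~\ref{mult:q_odd_propo} and~\ref{mult:q_even_propo} (the paper explicitly omits a proof of this particular proposition, so those are the only benchmark): pointwise evaluation at every vertex via \eqref{eq:xi_plus_minus}--\eqref{eq:xi_minus_plus}, the vanishing analysis for $p\le q$, and reduction to binomial absorption identities. Your key computations check out: at $(m+q+1,m-q-1)$ the second factor equals $-\alpha+q\delta$ (nonzero precisely because $q$ is even, in contrast to the odd case), so the left-hand side there is $\prod_{k=0}^{q}(-\alpha+k\delta)=\xi(m+q+1,m-q-1)\big((m+q+1,m-q-1)\big)$, matching coefficient $1$; the evaluation at $(m-q-1,m+q+1)$ gives $q\prod_{k=1}^{q+1}(\alpha+k\delta)$, matching coefficient $q$; and for $p\ge q+2$, with $B=\lceil\tfrac{p-q}{2}\rceil$, both sides do collapse to $B\binom{B+q}{q}=(q+1)\binom{B+q}{q+1}$ (for $p$ even; the odd-$p$ case needs both absorption identities but nothing more).

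There is, however, one step that fails as stated: the involution you use to deduce the second relation from the first. The arm swap $(m+p,m-p)\leftrightarrow(m-p,m+p)$ combined with $\alpha\mapsto-\alpha$ does \emph{not} interchange $\xi(m+q,m-q)$ and $\xi(m-q,m+q)$: in \eqref{eq:xi_plus_minus} the product runs over $k$ from $\lceil\tfrac{p-q+1}{2}\rceil-1$ to $\lceil\tfrac{p-q+1}{2}\rceil+q-2$, while in \eqref{eq:xi_minus_plus} it runs from $\lceil\tfrac{p-q+1}{2}\rceil$ to $\lceil\tfrac{p-q+1}{2}\rceil+q-1$, i.e.\ the ranges are shifted by one. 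The correct substitution is the ring automorphism $\alpha\mapsto-\alpha-\delta$, $\delta\mapsto\delta$, under which $-\alpha+k\delta\mapsto\alpha+(k+1)\delta$; this is also what the vertex labels dictate, since $(m+q,m-q)$ carries $\tfrac q2\big(-\alpha+(\tfrac q2-1)\delta\big)$ and $(m-q,m+q)$ carries $\tfrac q2\big(\alpha+\tfrac q2\delta\big)$, which correspond under $\alpha\mapsto-\alpha-\delta$ but not under $\alpha\mapsto-\alpha$. With $\alpha\mapsto-\alpha$ alone, the quantities $p^{(m+q,m-q)}_{(m+1,m-1)}$ and $p^{(m-q,m+q)}_{(m-1,m+1)}$ also fail to match, so the first relation does not transform into the second. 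The fix is local: either use the automorphism $\alpha\mapsto-\alpha-\delta$, or simply prove the second relation by a separate pointwise verification (a ``Step 2''), which is how the paper handles both equations in its proved analogues.
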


	\begin{propo}
For each odd integer $q$ satisfying $0 < q < m$, we have the following multiplicative relations:
	\begin{equation}
		\xi(m+q,m-q)\Big(\xi(m-1,m+1)- p^{(m+q,m-q)}_{(m-1,m+1)}\Big)=1\cdot\xi(m+q+1,m-q-1)+q\cdot\xi(m-q-1,m+q+1) \end{equation}
	\begin{equation}
	\xi(m-q,m+q)\Big(\xi(m+1,m-1)-p^{(m-q,m+q)}_{(m+1,m-1)}\Big)=q\cdot\xi(m+q+1,m-q-1)+1\cdot\xi(m-q-1,m+q+1)\end{equation}
\end{propo}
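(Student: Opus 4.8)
The plan is to prove both displayed identities by the same method used for Propositions~\ref{mult:q_odd_propo} and~\ref{mult:q_even_propo}, namely to verify each one as an equality of functions on the vertex set $\cv$, checked vertex by vertex. This reduction is legitimate: by Theorem~\ref{gen_in_cohom-ring} every class occurring on either side lies in $H_T^*(X_m)$, and by Proposition~\ref{ktclassformsbasis} the Knutson--Tao classes form a $\bq[\alpha,\delta]$-basis of $H_T^*(X_m)$, so two such classes coincide if and only if they agree at each $x\in\cv$. I will carry out the first identity in detail and then obtain the second by the mirror computation, interchanging the two weight-sides $(m+p,m-p)$ and $(m-p,m+p)$ (this is the analogue of passing between Steps~1 and~2 in the earlier proofs).

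First I would isolate the vertices on which both sides vanish. By Definition~\ref{ktclasses}, $\xi(m+q+1,m-q-1)$ vanishes at $(m+p,m-p)$ for $p\le q$ and at $(m-p,m+p)$ for $p\le q+1$, while $\xi(m-q-1,m+q+1)$ vanishes at $(m+p,m-p)$ for $p\le q+1$ and at $(m-p,m+p)$ for $p\le q$; hence the right-hand side $\xi(m+q+1,m-q-1)+q\,\xi(m-q-1,m+q+1)$ vanishes exactly at $(m\pm p,m\mp p)$ with $0\le p\le q$. On the left, $\xi(m+q,m-q)$ already vanishes at $(m+p,m-p)$ for $p\le q-1$ and at $(m-p,m+p)$ for $p\le q$, so the only remaining vertex in this range is $(m+q,m-q)$; there the factor $\xi(m-1,m+1)-p^{(m+q,m-q)}_{(m-1,m+1)}$ vanishes by the very definition of $p^{(m+q,m-q)}_{(m-1,m+1)}$ in Definition~\ref{Def:p^_}. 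Thus both sides vanish on precisely the same initial block of vertices.

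The key structural point, and the place where the coefficients $(1,q)$ replace the $(0,q+1)$ appearing in Proposition~\ref{mult:q_even_propo}, is the parity behaviour of $\xi(m-1,m+1)$ at the two consecutive vertices $(m+q,m-q)$ and $(m+q+1,m-q-1)$. Since $q$ is odd, these evaluations are the \emph{distinct} values $\tfrac{q-1}{2}\bigl(-\alpha+\tfrac{q-1}{2}\delta\bigr)$ and $\tfrac{q+1}{2}\bigl(-\alpha+\tfrac{q+1}{2}\delta\bigr)$; consequently the left-hand factor does \emph{not} vanish a second time at $(m+q+1,m-q-1)$, the product already contributes there, and $\xi(m+q+1,m-q-1)$ must enter with coefficient $1$ rather than $0$. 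Recognizing this parity-driven distinction, and propagating it correctly through the bookkeeping, is the main subtlety: it is exactly what separates the present odd-$q$ statement from the even-$q$ relation~\eqref{mult:even+--+}.

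It then remains to verify the identity on the vertices $(m+p,m-p)$ and $(m-p,m+p)$ with $q+1\le p\le m$. On each I would substitute the closed forms of Definition~\ref{ktclasses} together with the evaluations of $\xi(m-1,m+1)$, split according to the parity of $p$, reduce the ceilings via the identities recorded in Remarks~\ref{useful_rel_q_odd_i}, \ref{useful_rel_q_odd_ii}, \ref{useful_rel_q_even_i}, and~\ref{useful_rel_q_even_ii}, and collapse the resulting products using $(n-r)\binom{n}{r}=(r+1)\binom{n}{r+1}$ and $(n+1)\binom{n}{r}=(r+1)\binom{n+1}{r+1}$, just as in the proofs of Propositions~\ref{mult:q_odd_propo} and~\ref{mult:q_even_propo}. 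Matching the emergent binomial coefficients against $1$ and $q$ closes the argument; the only genuine labour is the parity bookkeeping flagged above, which becomes routine once that distinction is made explicit. As a sanity check, the case $q=1$ collapses to Proposition~\ref{mult:deg2deg2}, since $p^{(m+1,m-1)}_{(m-1,m+1)}=0$.
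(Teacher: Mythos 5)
Your proposal is correct and is essentially the proof the paper leaves to the reader: it mirrors, step for step, the method of Propositions~\ref{mult:q_odd_propo} and~\ref{mult:q_even_propo} --- the common vanishing block, the key observation that for odd $q$ the values $\xi(m-1,m+1)\big((m+q,m-q)\big)=\tfrac{q-1}{2}\big(-\alpha+\tfrac{q-1}{2}\delta\big)$ and $\xi(m-1,m+1)\big((m+q+1,m-q-1)\big)=\tfrac{q+1}{2}\big(-\alpha+\tfrac{q+1}{2}\delta\big)$ are distinct (which forces the coefficient $1$ rather than $0$), and the collapse of the remaining vertices $q+1\le p\le m$ to standard binomial identities --- and all of these checks do go through. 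The one minor imprecision is your appeal to Remarks~\ref{useful_rel_q_odd_ii}, \ref{useful_rel_q_odd_i}, \ref{useful_rel_q_even_ii}, and~\ref{useful_rel_q_even_i}: those ceiling identities are tailored to the pairings treated in the paper's propositions, and the mixed pairing here (odd $q$ against $\xi(m-1,m+1)$) requires slight variants, e.g.\ $\lceil\tfrac{p-1}{2}\rceil-\lceil\tfrac{q-1}{2}\rceil=\tfrac{p-q+1}{2}$ for $p$ even and $q$ odd, which are equally elementary and lead to the binomial identities exactly as you describe.
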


\begin{exam}
	We state some multiplicative relations among the Knutson--Tao basis elements for the $T$-action on $X(1,2,4)$ :
	
	\scalebox{0.9}{
\begin{tabular}{ll}
	$\xi(6,2)\cdot \big(\xi(3,5)	-p^{(6,2)}_{(3,5)}\big)=0\cdot\xi(7,1)+3\cdot\xi(1,7)$,~~~~~~~~~~~~~ & $\xi(2,6)\cdot\big( \xi(5,3)	-p^{(2,6)}_{(5,3)}\big)=3\cdot\xi(7,1)+0\cdot\xi(1,7)$,\\
	&\\
	$\xi(2,6)\cdot \big(\xi(3,5)	-p^{(2,6)}_{(3,5)}\big)=2\cdot\xi(7,1)+1\cdot\xi(1,7)$, &	$\xi(6,2)\cdot \big(\xi(5,3)	-p^{(6,2)}_{(5,3)}\big)=1\cdot\xi(7,1)+2\cdot\xi(1,7)$,\\
	&\\
	$\xi(1,7)\cdot\big(\xi(3,5)-p^{(1,7)}_{(3,5)}\big)=4\cdot\xi(8,0)+0\cdot\xi(0,8)$, &$\xi(7,1)\cdot\big(\xi(5,3)-p^{(7,1)}_{(5,3)}\big)=0\cdot\xi(8,0)+4\cdot\xi(0,8)$,\\
	&\\

	$\xi(7,1)\cdot\big(\xi(3,5)-p^{(7,1)}_{(3,5)}\big)=1\cdot\xi(8,0)+3\cdot\xi(0,8)$, &	$\xi(1,7)\cdot\big(\xi(5,3)-p^{(1,7)}_{(5,3)}\big)=3\cdot\xi(8,0)+1\cdot\xi(0,8)$,\\
	&\\
	$\xi(5,3)\cdot\big(\xi(5,3)-p^{(5,3)}_{(5,3)}\big)=2\cdot\xi(2,6)$, &$\xi(3,5)\cdot\big(\xi(3,5)-p^{(3,5)}_{(3,5)}\big)=2\cdot\xi(6,2)$,\\
	&\\
	$\xi(5,3)\cdot \xi(3,5)=\xi(6,2)+\xi(2,6)$.
\end{tabular}}
\end{exam}

\begin{exam}
We state some multiplicative relations among the Knutson--Tao basis elements  for the $T$-action on $X(1,2,5)$ :
			\[\begin{split}
				\xi(9,1)\cdot\big(\xi(4,6)-p^{(9,1)}_{(4,6)}\big)=0\xi(10,0)+5\xi(0,10) ~~~~	&\xi(9,1)\cdot\big(\xi(6,4)-p^{(7,3)}_{(6,4)}\big)=5\xi(10,0)+0\xi(0,10)\\
				&\\
				\xi(9,1)\cdot\big(\xi(6,4)-p^{(9,1)}_{(6,4)}\big)=1\xi(10,0)+4\xi(0,10) ~~~~	&\xi(9,1)\cdot\big(\xi(4,6)-p^{(7,3)}_{(4,6)}\big)=4\xi(10,0)+1\xi(0,10)\\
				&\\
				\xi(8,2)\cdot\big(\xi(6,4)-p^{(8,2)}_{(6,4)}\big)=0\xi(9,1)+4\xi(1,9) ~~~~~~~	&\xi(2,8)\cdot\big(\xi(4,6)-p^{(2,8)}_{(4,6)}\big)=4\xi(9,1)+0\xi(1,9)\\
				&\\
				\xi(8,2)\cdot\big(\xi(4,6)-p^{(8,2)}_{(4,6)}\big)=1\xi(9,1)+3\xi(1,9) ~~~~~~~	&\xi(2,8)\cdot\big(\xi(6,4)-p^{(2,8)}_{(6,4)}\big)=3\xi(9,1)+1\xi(1,9)\\
				&\\
				\xi(7,3)\cdot\big(\xi(4,6)-p^{(7,3)}_{(4,6)}\big)=0\xi(8,2)+3\xi(2,8) ~~~~~~~	&\xi(3,7)\cdot\big(\xi(6,4)-p^{(7,3)}_{(6,4)}\big)=3\xi(8,2)+0\xi(2,8)\\
				&\\
				\xi(7,3)\cdot\big(\xi(6,4)-p^{(7,3)}_{(6,4)}\big)=1\xi(8,2)+2\xi(2,8) ~~~~~~~	&\xi(3,7)\cdot\big(\xi(4,6)-p^{(7,3)}_{(4,6)}\big)=2\xi(8,2)+1\xi(2,8)\\
					&\\
				\xi(6,4)\cdot\big(\xi(6,4)-p^{(6,4)}_{(6,4)}\big)=0 \xi(7,3)+2\xi(3,7) ~~~~~~~	&\xi(4,6)\cdot\big(\xi(4,6)-p^{(4,6)}_{(4,6)}\big)=2\xi(7,3)+0 \xi(3,7)\\
					&\\
				\xi(6,4)\cdot\xi(4,6)=\xi(7,3)+\xi(3,7)~~~~~~~ &
			\end{split}\] 
\end{exam}


	\section{Degree two generators and defining relations.}\label{s:gen_rel}
In this section, we describe the ring structure of the equivariant cohomology of $X(1,2,m)$ under the $   (\mathbb{C}^*)^2$-action in terms of generators and relations. We observe that it suffices to consider two degree-$2$ Knutson--Tao basis elements, namely $\xi(m+1,m-1)$ and $\xi(m-1,m+1)$, as generators, together with three relations among them.

\begin{defe}[Degree-two generators]
	\label{def:degree_two_generators}
	We recall the degree-two elements from Definition~\ref{ktclasses} by setting $q=1$.
	\[
	\xi(m+1,m-1)(x)=
	\begin{cases}
		\left\lceil \dfrac{p}{2} \right\rceil \Big(-\alpha + \big(\left\lceil \dfrac{p}{2} \right\rceil -1\big)\delta\Big), & \text{if } x=(m+p,m-p),~1\leq p\leq m, \\[6pt]
		\left\lceil \dfrac{p-1}{2} \right\rceil \Big(\alpha + \big(\left\lceil \dfrac{p-1}{2} \right\rceil +1\big)\delta\Big), & \text{if } x=(m-p,m+p),~2\leq p\leq m, \\[6pt]
	~~~~~~~~~~~~	0, & \text{otherwise.}
	\end{cases}
	\]
	\[
	\xi(m-1,m+1)(x)=
	\begin{cases}
		\left\lceil \dfrac{p-1}{2} \right\rceil \Big(-\alpha + \left\lceil \dfrac{p-1}{2} \right\rceil \delta\Big), & \text{if } x=(m+p,m-p),~2\leq p\leq m, \\[6pt]
		\left\lceil \dfrac{p}{2} \right\rceil \Big(\alpha + \left\lceil \dfrac{p}{2} \right\rceil \delta\Big), & \text{if } x=(m-p,m+p),~1\leq p\leq m, \\[6pt]
	~~~~~~~~~~~~	0, & \text{otherwise.}
	\end{cases}
	\]
\end{defe}


	The following theorem shows that every Knutson--Tao basis element can be expressed in terms of the degree-two Knutson--Tao basis elements $\xi(m+1,m-1)$ and $\xi(m-1,m+1)$.
	
\begin{guess}\label{surj}
	Let $q$ be an integer such that $0 < q \leq m$. Then the following relations hold:
	
	\begin{enumerate}
		\item \underline{if $q$ is odd}
		\begin{equation}\label{surj:qodd_+-}
			\xi(m+q,m-q)=\displaystyle\frac{1}{ q!}\xi(m+1,m-1)\cdot\prod_{\substack{1\leq r< q ,\\ r~ \text{odd}}}\Big(\xi(m+1,m-1)-p^{(m+r,m-r)}_{(m+1,m-1)}\Big)\cdot\prod_{\substack{1\leq r< q,\\ r~ \text{even}}}\Big(\xi(m+1,m-1)-p^{(m-r,m+r)}_{(m+1,m-1)}\Big)
		\end{equation}
		\begin{equation}\label{surj:qodd-+}
			\xi(m-q,m+q)=\displaystyle\frac{1}{ q!}\xi(m-1,m+1)\cdot\prod_{\substack{1\leq r< q ,\\ r~ \text{odd}}}\Big(\xi(m-1,m+1)-p^{(m-r,m+r)}_{(m-1,m+1)}\Big)\cdot\prod_{\substack{1\leq r< q,\\ r~ \text{even}}}\Big(\xi(m-1,m+1)-p^{(m+r,m-r)}_{(m-1,m+1)}\Big)
		\end{equation}
		
		\item \underline{if $q$ is even}
		\begin{equation}\label{surj:qeven_+-}
			\xi(m+q,m-q)=\displaystyle\frac{1}{ q!} \xi(m-1,m+1)\cdot\prod_{\substack{1\leq r< q ,\\ r~ \text{odd}}}\Big(\xi(m-1,m+1)-p^{(m-r,m+r)}_{(m-1,m+1)}\Big)\cdot\prod_{\substack{1\leq r< q,\\ r~ \text{even}}}\Big(\xi(m-1,m+1)-p^{(m+r,m-r)}_{(m-1,m+1)}\Big)
		\end{equation}
		\begin{equation}\label{surj:qeven_-+}
			\xi(m-q,m+q)=\displaystyle\frac{1}{q!}\xi(m+1,m-1)\cdot\prod_{\substack{1\leq r< q ,\\ r~ \text{odd}}}\Big(\xi(m+1,m-1)-p^{(m+r,m-r)}_{(m+1,m-1)}\Big)\cdot\prod_{\substack{1\leq r< q,\\ r~ \text{even}}}\Big(\xi(m+1,m-1)-p^{(m-r,m+r)}_{(m+1,m-1)}\Big)
		\end{equation}
	\end{enumerate}
\end{guess}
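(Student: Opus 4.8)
The plan is to prove the four identities simultaneously by induction on $q$, using the multiplicative recursions of Propositions~\ref{mult:q_odd_propo} and~\ref{mult:q_even_propo} as the mechanism that raises the index by one. The induction alternates parity: each identity at index $q$ is deduced from an identity at index $q-1$ of the opposite parity, so the scheme is well founded. For the base case $q=1$ both products in \eqref{surj:qodd_+-} and \eqref{surj:qodd-+} are empty and the prefactor $1/1!$ is trivial, so these identities read $\xi(m+1,m-1)=\xi(m+1,m-1)$ and $\xi(m-1,m+1)=\xi(m-1,m+1)$; no separate verification at $q=2$ is needed, since the step $q-1=1\to q=2$ is already covered by the recursions at $q=1$.

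For the inductive step I would treat \eqref{surj:qodd_+-} in detail, the other three being entirely analogous after interchanging the two degree-two generators and the two fixed-point families according to parity. Suppose $q$ is odd, so $q-1$ is even, and assume \eqref{surj:qeven_-+} for $\xi(m-(q-1),m+(q-1))$. Reading the recursion \eqref{mult:even-++-} with $q$ replaced by $q-1$ gives
\[
\xi(m-(q-1),m+(q-1))\bigl(\xi(m+1,m-1)-p^{(m-(q-1),m+(q-1))}_{(m+1,m-1)}\bigr)=q\cdot\xi(m+q,m-q),
\]
so $\xi(m+q,m-q)$ arises from $\xi(m-(q-1),m+(q-1))$ by one extra degree-two factor together with division by $q$. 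Substituting the inductive expression, the claim \eqref{surj:qodd_+-} follows from three elementary checks: the prefactor updates as $\tfrac{1}{q}\cdot\tfrac{1}{(q-1)!}=\tfrac{1}{q!}$, which is precisely why the recursion coefficient must be $q=(q-1)+1$; the new factor $\bigl(\xi(m+1,m-1)-p^{(m-(q-1),m+(q-1))}_{(m+1,m-1)}\bigr)$ is, since $q-1$ is even, exactly the missing $r=q-1$ term of the even product in \eqref{surj:qodd_+-}; and the odd product together with the lower even factors already agrees with the hypothesis, because the largest odd index below $q$ equals the largest odd index below $q-1$. The recursions \eqref{mult:even+--+}, \eqref{mult:odd-+-+} and \eqref{mult:odd+-+-} supply the inductive steps for \eqref{surj:qodd-+}, \eqref{surj:qeven_+-} and \eqref{surj:qeven_-+} respectively, in the same manner.

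The main obstacle is organizational rather than computational: one must track carefully which of the two degree-two generators and which shift constant $p$ occurs at each index, since the recursions cross-connect the $(m+q,m-q)$ and $(m-q,m+q)$ families with a parity-dependent alternation. The substantive algebra is already packaged in Propositions~\ref{mult:q_odd_propo} and~\ref{mult:q_even_propo}, so once the indexing is arranged correctly the induction closes by the factorial identity and the telescoping of the displayed products.
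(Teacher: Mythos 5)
Your proof is correct and takes essentially the same route as the paper's: both obtain $\xi(m\pm q,m\mp q)$ by alternating applications of the recursions \eqref{mult:odd+-+-}, \eqref{mult:odd-+-+}, \eqref{mult:even+--+} and \eqref{mult:even-++-} from Propositions~\ref{mult:q_odd_propo} and~\ref{mult:q_even_propo}, with the coefficients $2,3,\ldots,q$ telescoping into the $\tfrac{1}{q!}$ prefactor. Your write-up simply makes explicit, as a parity-alternating induction on $q$, the bookkeeping that the paper compresses into the phrase ``alternating applications.''
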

\begin{proof}
	The result follows from Propositions~\ref{mult:q_odd_propo} and~\ref{mult:q_even_propo}. In particular, \eqref{surj:qodd_+-} is obtained by alternating applications of \eqref{mult:even-++-} and \eqref{mult:odd+-+-}, while \eqref{surj:qodd-+} follows from alternating applications of \eqref{mult:even+--+} and \eqref{mult:odd-+-+}.
	
	Similarly, \eqref{surj:qeven_+-} follows from alternating applications of \eqref{mult:odd-+-+} and \eqref{mult:even+--+}, and \eqref{surj:qeven_-+} from alternating applications of \eqref{mult:odd+-+-} and \eqref{mult:even-++-}.
\end{proof}

\begin{propo}\label{rel:1}
	We have the following relation among two degree 2 generators:
	\begin{equation}
		\begin{split}
			2\xi(m+1,m-1)\xi(m-1,m+1)=&\xi(m+1,m-1)\Big(\xi(m+1,m-1)-p^{(m+1,m-1)}_{(m+1,m-1)}\Big)\\&+\xi(m-1,m+1)\Big(\xi(m-1,m+1)-p^{(m-1,m+1)}_{(m-1,m+1)}\Big)
		\end{split}
	\end{equation}
\end{propo}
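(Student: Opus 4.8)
The plan is to derive this relation purely formally from two facts already established, namely Proposition~\ref{mult:deg2deg2} and Corollary~\ref{rep:deg4}, rather than reverting to a vertex-by-vertex computation as in the earlier propositions. Throughout I assume $m \geq 2$, so that the degree-four classes $\xi(m+2,m-2)$ and $\xi(m-2,m+2)$ are defined; the degenerate case $m=1$ I would handle separately at the end.

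First I would recall from \eqref{mult22} that
\[
\xi(m+1,m-1)\,\xi(m-1,m+1) = \xi(m+2,m-2) + \xi(m-2,m+2),
\]
and multiply both sides by $2$ to obtain
\[
2\,\xi(m+1,m-1)\,\xi(m-1,m+1) = 2\,\xi(m+2,m-2) + 2\,\xi(m-2,m+2).
\]
Next I would substitute for the two degree-four classes using Corollary~\ref{rep:deg4}: equation \eqref{m+2,m-2} rewrites $2\,\xi(m+2,m-2)$ as $\xi(m-1,m+1)\big(\xi(m-1,m+1)-p^{(m-1,m+1)}_{(m-1,m+1)}\big)$, while \eqref{m-2,m+2} rewrites $2\,\xi(m-2,m+2)$ as $\xi(m+1,m-1)\big(\xi(m+1,m-1)-p^{(m+1,m-1)}_{(m+1,m-1)}\big)$. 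Adding the two substituted expressions reproduces exactly the right-hand side of the claimed identity, which completes the argument. (Here the scalars $p^{(m\pm1,m\mp1)}_{(m\pm1,m\mp1)} \in \bq[\alpha,\delta]$ are read as the corresponding constant classes, in the sense of Definition~\ref{Def:p^_}.)

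Since both ingredients are already proved, there is essentially no analytic obstacle; the only point requiring care is the range of validity, as the substitution implicitly uses $m\geq 2$. For $m=1$ the relation must be checked directly: here $\xi(m+1,m-1)=\xi(2,0)$ and $\xi(m-1,m+1)=\xi(0,2)$ are the two top classes, each supported at a single fixed point, so their product vanishes and the left-hand side is $0$. On the other hand, each factor $\xi(2,0)-p^{(2,0)}_{(2,0)}$ and $\xi(0,2)-p^{(0,2)}_{(0,2)}$ vanishes at the unique vertex where the corresponding class is nonzero, by the normalization $\xi(x)(x)=\prod_{e\in\mathcal{E}_x}\alpha(e)$ of Definition~\ref{ktclass}(i) (cf.\ Remark~\ref{prod_of_edge_label}), so the right-hand side is also $0$. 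Hence the identity holds for all $m\geq 1$.
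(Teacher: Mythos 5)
Your proof is correct and follows essentially the same route as the paper, whose own proof is precisely the one-line observation that the identity follows from Proposition~\ref{mult:deg2deg2} combined with Corollary~\ref{rep:deg4}. Your additional direct verification of the degenerate case $m=1$ (where $\xi(m+2,m-2)$ and $\xi(m-2,m+2)$ are undefined, so the substitution argument does not literally apply) is a point the paper passes over silently, and it is handled correctly.
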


\begin{proof}
	The result directly follows from Corollary \ref{rep:deg4} and Proposition \ref{mult:deg2deg2}.
\end{proof}

\begin{propo}\label{rel:2}
	We have the following relations among two degree 2 generators:
	\begin{align}
&\displaystyle \xi(m+1,m-1)\cdot\prod_{\substack{1\leq r\leq m ,\\ r~ \text{odd}}}\Big(\xi(m+1,m-1)-p^{(m+r,m-r)}_{(m+1,m-1)}\Big)\cdot\prod_{\substack{1 \leq r\leq m,\\ r~ \text{even}}}\Big(\xi(m+1,m-1)-p^{(m-r,m+r)}_{(m+1,m-1)}\Big)=0 \label{4.39}\\
&\displaystyle \xi(m-1,m+1)\cdot\prod_{\substack{1\leq r\leq m ,\\ r~ \text{odd}}}\Big(\xi(m-1,m+1)-p^{(m-r,m+r)}_{(m-1,m+1)}\Big)\cdot\prod_{\substack{1\leq r\leq m,\\ r~ \text{even}}}\Big(\xi(m-1,m+1)-p^{(m+r,m-r)}_{(m-1,m+1)}\Big)=0 \label{4.40}
\end{align}
\end{propo}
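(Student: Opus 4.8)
The plan is to read the left-hand sides of \eqref{4.39} and \eqref{4.40} as the telescoping products already analyzed in Theorem~\ref{surj}, carried \emph{one factor too far}, and to show that this extra factor annihilates the product by means of Proposition~\ref{mult:highest}. Since the cup product on $H^*_T(X_m)$ is commutative, I would first merge the two products in \eqref{4.39} into a single ordered product
$\xi(m+1,m-1)\cdot\prod_{r=1}^{m}F_r$,
where $F_r:=\xi(m+1,m-1)-p^{(m+r,m-r)}_{(m+1,m-1)}$ for $r$ odd and $F_r:=\xi(m+1,m-1)-p^{(m-r,m+r)}_{(m+1,m-1)}$ for $r$ even. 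The point is that the truncated product $\xi(m+1,m-1)\cdot\prod_{r=1}^{m-1}F_r$ is exactly the right-hand side of Theorem~\ref{surj} at $q=m$: when $m$ is odd it equals $m!\,\xi(2m,0)$ by \eqref{surj:qodd_+-}, and when $m$ is even it equals $m!\,\xi(0,2m)$ by \eqref{surj:qeven_-+}. Both identifications express the truncated product in terms of the single generator $\xi(m+1,m-1)$, which is what we need.

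It then remains to reinsert the final factor $F_m$ and apply Proposition~\ref{mult:highest}. When $m$ is odd, $F_m=\xi(m+1,m-1)-p^{(2m,0)}_{(m+1,m-1)}$, so the left-hand side of \eqref{4.39} becomes $m!\,\xi(2m,0)\bigl(\xi(m+1,m-1)-p^{(2m,0)}_{(m+1,m-1)}\bigr)$, which vanishes by \eqref{mult:2m0} taken at $q=1$. When $m$ is even, $F_m=\xi(m+1,m-1)-p^{(0,2m)}_{(m+1,m-1)}$, and the left-hand side becomes $m!\,\xi(0,2m)\bigl(\xi(m+1,m-1)-p^{(0,2m)}_{(m+1,m-1)}\bigr)$, which vanishes by \eqref{mult:02m}. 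Relation \eqref{4.40} is proved by the mirror argument: start from $\xi(m-1,m+1)$, identify the truncated product with $m!\,\xi(0,2m)$ (for $m$ odd, via \eqref{surj:qodd-+}) or $m!\,\xi(2m,0)$ (for $m$ even, via \eqref{surj:qeven_+-}), and conclude with the companion halves of Proposition~\ref{mult:highest}.

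The conceptual content—and the only place demanding care—is that the recurrences of Propositions~\ref{mult:q_odd_propo} and~\ref{mult:q_even_propo} hold only for $q<m$, so the step from the vertex at $q=m$ to the would-be vertex at $q=m+1$ cannot be taken directly; the hypothetical image $\xi(2m+1,-1)$ or $\xi(-1,2m+1)$ lies outside $\cv=\{(m-q,m+q):-m\le q\le m\}$. Proposition~\ref{mult:highest} is precisely the substitute for the missing recurrence at the extreme vertex: the top class $\xi(2m,0)$ (resp. $\xi(0,2m)$) is supported at a single fixed point, so multiplying it by $\xi(m+1,m-1)$ minus its own value there kills it. I expect the main obstacle to be purely the parity bookkeeping—tracking whether the terminal class is $\xi(2m,0)$ or $\xi(0,2m)$, and selecting the matching equations from Theorem~\ref{surj} and Proposition~\ref{mult:highest}—which is routine once the four sub-cases are laid out explicitly.
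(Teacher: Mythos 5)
Your proposal is correct and is essentially the paper's own proof: both identify the truncated product (factors $r<m$) with $m!$ times the top class $\xi(2m,0)$ or $\xi(0,2m)$ via Theorem~\ref{surj} at $q=m$, and then kill it with the last factor using Proposition~\ref{mult:highest}. The only difference is presentational --- you write out both parities of $m$ and track the four sub-cases explicitly, while the paper records only the even case and leaves the odd case to the reader.
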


\begin{proof}
We prove the result assuming that $m$ is even. The case when $m$ is odd can be treated similarly.

By setting $q=m$ in \eqref{surj:qeven_+-} and \eqref{surj:qeven_-+} of Theorem \ref{surj}, we have 
	\begin{equation}\label{proof:rep_2m0}
	\xi(2m,0)=\displaystyle\frac{1}{ m!} \xi(m-1,m+1)\cdot\prod_{\substack{1\leq r< m ,\\ r~ \text{odd}}}\Big(\xi(m-1,m+1)-p^{(m-r,m+r)}_{(m-1,m+1)}\Big)\cdot\prod_{\substack{1\leq r< m,\\ r~ \text{even}}}\Big(\xi(m-1,m+1)-p^{(m+r,m-r)}_{(m-1,m+1)}\Big),
\end{equation}
\begin{equation}\label{proof:rep_02m}
	\xi(0,2m)=\displaystyle\frac{1}{m!}\xi(m+1,m-1)\cdot\prod_{\substack{1\leq r< m ,\\ r~ \text{odd}}}\Big(\xi(m+1,m-1)-p^{(m+r,m-r)}_{(m+1,m-1)}\Big)\cdot\prod_{\substack{1\leq r< m,\\ r~ \text{even}}}\Big(\xi(m+1,m-1)-p^{(m-r,m+r)}_{(m+1,m-1)}\Big).
\end{equation}

Moreover, by Proposition \ref{mult:highest}, we have
\begin{equation}\label{proof:highestdeg_2m0}	\xi(2m,0)\Big(\xi(m-1,m+1)-p_{(m-1,m+1)}^{(2m,0)}\Big)=0 \end{equation}	
\begin{equation}\label{proof:highestdeg_02m} \xi(0,2m)\Big(\xi(m+1,m-1)-p_{(m+1,m-1)}^{(0,2m)}\Big)=0.
\end{equation}

Hence, \eqref{4.39} follows from \eqref{proof:highestdeg_2m0} and \eqref{proof:rep_2m0}, while  \eqref{4.40} follows from \eqref{proof:highestdeg_02m} and \eqref{proof:rep_02m}.
\end{proof}

Let $\bq[T][\xi^+_m,\xi^-_m]$ be the polynomial ring generated by $\xi^+_m$ and $\xi^-_m$ over the coefficient ring $\bq[T]=\bq[\alpha,\delta]$ \eqref{toruspresent}. 
Let $\mathcal{I}\triangleleft \bq[T][\xi^+_m,\xi^-_m]$ be an ideal generated by the following  elements:
\begin{enumerate}[(i)]
	\item $			2\xi^+_m\xi^-_m-\xi^+_m\Big(\xi^+_m-p^{(m+1,m-1)}_{(m+1,m-1)}\Big)-\xi^-_m\Big(\xi^-_m-p^{(m-1,m+1)}_{(m-1,m+1)}\Big)$,
	\item $ \xi^+_m\cdot\prod_{\substack{1\leq r\leq m ,\\ r~ \text{odd}}}\Big(\xi^+_m-p^{(m+r,m-r)}_{(m+1,m-1)}\Big)\cdot\prod_{\substack{1 \leq r\leq m,\\ r~ \text{even}}}\Big(\xi^+_m-p^{(m-r,m+r)}_{(m+1,m-1)}\Big)$,
\item $ \xi^-_m \cdot\prod_{\substack{1\leq r\leq m ,\\ r~ \text{odd}}}\Big(\xi^-_m -p^{(m-r,m+r)}_{(m-1,m+1)}\Big)\cdot\prod_{\substack{1\leq r\leq m,\\ r~ \text{even}}}\Big(\xi^-_m -p^{(m+r,m-r)}_{(m-1,m+1)}\Big)$.
\end{enumerate}

Let 
\[
\widetilde{\phi}: \mathbb{Q}[T][\xi^+_m,\xi^-_m] \to H^*_T(X(1,2,m))
\] 
be a ring homomorphism defined by
\[
\xi^+_m \mapsto \xi(m+1,m-1), \quad \xi^-_m \mapsto \xi(m-1,m+1).
\]

Let
\begin{equation}\label{eq:phi_induced}
	\phi: \mathbb{Q}[T][\xi^+_m,\xi^-_m] /\mathcal{I} \to H^*_T(X(1,2,m))
\end{equation}
be the homomorphism induced from $\widetilde{\phi}$. Note that the well-definedness of $\phi$ follows immediately from the Proposition \ref{rel:1} and Proposition \ref{rel:2}.

In other words, the following diagram commutes:
\[
\begin{tikzcd}[row sep=1cm, column sep=1cm]
	&  \mathbb{Q}[\alpha, \delta, \xi^+_m,\xi^-_m] \arrow[d,"pr"'] \arrow[dr, "\widetilde{\phi}"] & \\
	&\mathbb{Q}[\alpha, \delta, \xi^+_m,\xi^-_m] /\mathcal{I} \arrow[r, "\phi"'] & H^*_T(X(1,2,m))
\end{tikzcd}
\]
where the vertical map is the natural projection.

Now we state the main theorem of this paper.

\begin{guess}\label{theo:main_cqG}
	The homomorphism 
	\[
	\phi:\mathbb{Q}[\alpha, \delta, \xi^+_m,\xi^-_m] /\mathcal{I} \longrightarrow H^*_T(X(1,2,m))
	\] 
	is an isomorphism of rings.
\end{guess}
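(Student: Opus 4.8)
The plan is to show that $\phi$ is surjective and then to deduce injectivity from a rank count over $\mathbb{Q}[T]$. Surjectivity is immediate from Theorem~\ref{surj}: that theorem expresses every Knutson--Tao basis element $\xi(m+q,m-q)$ and $\xi(m-q,m+q)$, $1\le q\le m$, as an explicit polynomial in $\xi(m+1,m-1)=\phi(\xi^+_m)$ and $\xi(m-1,m+1)=\phi(\xi^-_m)$, while $\xi(m,m)=1=\phi(1)$. Since $\phi$ is a homomorphism of $\mathbb{Q}[T]$-algebras and these classes form a $\mathbb{Q}[T]$-basis of $H^*_T(X_m)$, they all lie in the image, so $\phi$ is surjective.

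Write $A:=\mathbb{Q}[T][\xi^+_m,\xi^-_m]/\mathcal{I}$ and, for brevity, $u:=\xi^+_m$, $v:=\xi^-_m$. The crux is to prove that $A$ is generated as a $\mathbb{Q}[T]$-module by the $2m+1$ elements $1,u,u^2,\dots,u^m,v,v^2,\dots,v^m$. Granting this, the theorem follows formally: $H^*_T(X_m)$ is a free $\mathbb{Q}[T]$-module of rank $2m+1$ on the Knutson--Tao basis, so composing the surjection $\mathbb{Q}[T]^{\,2m+1}\twoheadrightarrow A$ supplied by the generating set with $\phi\colon A\twoheadrightarrow H^*_T(X_m)\cong\mathbb{Q}[T]^{\,2m+1}$ yields a surjective endomorphism of a finitely generated module over the commutative ring $\mathbb{Q}[T]$. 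Such an endomorphism is automatically injective, hence an isomorphism; therefore the generating map is injective (so $A$ is free of rank $2m+1$) and $\phi$ is an isomorphism.

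To produce the generating set I would use the three generators of $\mathcal{I}$ in two stages. First, relations (ii) and (iii) are monic of degree $m+1$ in $u$ and in $v$ respectively, so modulo $\mathcal{I}$ they rewrite $u^{m+1}$ and $v^{m+1}$ as $\mathbb{Q}[T]$-combinations of lower pure powers; inductively $u^k\in\mathrm{span}\{1,\dots,u^m\}$ and $v^k\in\mathrm{span}\{1,\dots,v^m\}$ for all $k$. It then remains to rewrite every mixed monomial $u^iv^j$ with $i,j\ge 1$ in terms of pure powers. Rearranging relation~(i) gives
\[
2uv=u^2+v^2-p^{(m+1,m-1)}_{(m+1,m-1)}\,u-p^{(m-1,m+1)}_{(m-1,m+1)}\,v .
\]
Fix a total degree $d$ and set $x_i:=u^iv^{d-i}$ for $0\le i\le d$. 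Multiplying relation~(i) by $u^{i-1}v^{d-i-1}$ gives, for each $1\le i\le d-1$, a relation $2x_i-x_{i+1}-x_{i-1}=(\text{terms of total degree }d-1)$. Viewing $x_1,\dots,x_{d-1}$ as unknowns and the pure powers $x_0=v^d$, $x_d=u^d$ as boundary data, the coefficient matrix is the $(d-1)\times(d-1)$ tridiagonal matrix with $2$ on the diagonal and $-1$ off it, of determinant $d\in\mathbb{Q}^\times$; inverting it expresses each mixed $x_i$ as a $\mathbb{Q}[T]$-combination of $u^d$, $v^d$, and monomials of total degree $d-1$. An induction on $d$ then reduces every monomial to pure powers, and the degree cap from the first stage confines these to $\{1,u,\dots,u^m,v,\dots,v^m\}$, proving the claim.

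The main obstacle is precisely this cross-term elimination. Relation~(i) cannot be used as a one-step rewriting rule sending $uv$ to pure powers, since no monomial order makes $uv$ the leading term of $u^2-2uv+v^2$; indeed, over the fraction field one checks that $A\otimes\mathbb{Q}(\alpha,\delta)$ is a quotient of the polynomial ring in $u-v$, so mixed and pure monomials are genuinely entangled and pure powers do not obviously generate. What makes the argument go through is that the obstruction is \emph{linear} in each fixed degree: the simultaneous solve above has an invertible discrete-Laplacian coefficient matrix whose determinant is a nonzero rational, so the reduction descends from the fraction field to $\mathbb{Q}[T]$ itself, which is exactly what forces the freeness of $A$ and hence the isomorphism.
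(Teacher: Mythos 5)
Your proof is correct, and its skeleton matches the paper's: surjectivity from Theorem~\ref{surj}, then injectivity by a $\mathbb{Q}[T]$-module rank comparison, using that a surjection between free modules of the same finite rank over a commutative ring is injective. The substantive difference lies in how the rank of $\mathbb{Q}[T][\xi^+_m,\xi^-_m]/\mathcal{I}$ is established. The paper simply asserts (``one can check'') that $\{1,(\overline{\xi^+_m})^i,(\overline{\xi^-_m})^j : 1\le i,j\le m\}$ is a $\mathbb{Q}[T]$-basis of the quotient; but spanning is exactly the nontrivial point, since relation (i) cannot eliminate the mixed monomials by leading-term rewriting under any monomial order. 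Your argument supplies precisely this missing step: the monic relations (ii) and (iii) cap pure powers of $u=\xi^+_m$ and $v=\xi^-_m$ at degree $m$, and in each total degree $d$ the mixed monomials $u^iv^{d-i}$ satisfy the linear system with tridiagonal discrete-Laplacian coefficient matrix of determinant $d\in\mathbb{Q}^\times$, so inverting it over $\mathbb{Q}$ reduces them to pure powers plus lower-degree terms, and induction on $d$ finishes the spanning claim. Your formal arrangement is also more economical than the paper's: you need only that the $2m+1$ pure powers \emph{generate} the quotient, after which freeness of the quotient (hence the paper's basis assertion) and injectivity of $\phi$ both fall out of the determinant-trick argument, with no need to verify linear independence in the quotient directly. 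In short: same route as the paper, but with the paper's ``one can check'' actually checked.
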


\begin{proof}
	From Proposition \ref{ktclassformsbasis}, it follows that the collection
	\[
	\Big\{\xi(m,m),~\xi(m+q,m-q),~\xi(m-q,m+q) : 1 \le q \le m\Big\}
	\]
	forms a $\mathbb{Q}[\alpha,\delta]$-module \eqref{toruspresent} basis of $H^*_T(X(1,2,m))$. Using Theorem \ref{surj}, the  generating set (as a $\mathbb{Q}[\alpha,\delta]$-algebra) further reduces to
	\[
	\{\xi(m+1,m-1),~\xi(m-1,m+1)\}.
	\]
	Hence, the surjectivity of $\phi$ follows.

	Moreover, from the definition of 
	\[
	\mathbb{Q}[\alpha,\delta, \xi^+_m,\xi^-_m] / \mathcal{I},
	\]
	one can check that
	\[
	\{1, (\overline{\xi^+_m})^i, (\overline{\xi^-_m})^j : 1 \le i,j \le m\}
	\]
	forms a $\mathbb{Q}[\alpha,\delta]$-basis.  Hence, as a $\mathbb{Q}[\alpha,\delta]$-module, the rank of $\mathbb{Q}[\alpha,\delta, \xi^+_m,\xi^-_m] / \mathcal{I}$ is $2m+1$, which matches the rank of $H^*_T(X(1,2,m))$. 
	
	Therefore, $\phi$ is injective, and the theorem follows.
\end{proof}

%
%
%


	From Definition~\ref{Def:p^_}, one can observe that for any $m_1, m_2 \in \bn$ and $1 \leq q \leq m_i$, $i = 1, 2$, we have  
	\[
	p^{(m_1+q,\,m_1-q)}_{(m_1+1,\,m_1-1)} = p^{(m_2+q,\,m_2-q)}_{(m_2+1,\,m_2-1)}, 
	\qquad 
	p^{(m_1-q,\,m_1+q)}_{(m_1+1,\,m_1-1)} = p^{(m_2-q,\,m_2+q)}_{(m_2+1,\,m_2-1)}.
	\]
	Therefore, for any $q > 0$, we define 
	\[
	p^{(q,-q)}_{+} := p^{(2q,0)}_{(q+1,\,q-1)}, 
	\qquad 
	p^{(-q,q)}_{+} := p^{(0,2q)}_{(q+1,\,q-1)},
	\]
	and similarly, 
	\[
	p^{(q,-q)}_{-} := p^{(2q,0)}_{(q-1,\,q+1)}, 
	\qquad 
	p^{(-q,q)}_{-} := p^{(0,2q)}_{(q-1,\,q+1)}.
	\]

	Let $\bq[T][\xi^+,\xi^-]$ be the polynomial ring generated by $\xi^+$ and $\xi^-$ over the coefficient ring $\bq[T]=\bq[\alpha,\delta]$ \eqref{toruspresent}.   
	Let $\mathcal{J} \triangleleft \bq[T][\xi^+,\xi^-]$ be the ideal generated by the following elements:
	\begin{enumerate}[(i)]
		\item $2\xi^+\xi^- - \xi^+\big(\xi^+ - p^{(1,-1)}_{+}\big) - \xi^-\big(\xi^- - p^{(-1,1)}_{-}\big)$,
		\item $\xi^+ \cdot \displaystyle\prod_{\substack{q \ge 1\\ q~\text{odd}}}\!\big(\xi^+ - p^{(q,-q)}_{+}\big) 
		\cdot \!\!\!\!\!\prod_{\substack{q \ge 1\\ q~\text{even}}}\!\big(\xi^+ - p^{(-q,q)}_{+}\big)$,
		\item $\xi^- \cdot \displaystyle\prod_{\substack{q \ge 1\\ q~\text{odd}}}\!\big(\xi^- - p^{(-q,q)}_{-}\big) 
		\cdot \!\!\!\!\!\prod_{\substack{q \ge 1\\ q~\text{even}}}\!\big(\xi^- - p^{(q,-q)}_{-}\big)$.
	\end{enumerate}

\begin{coro}\label{theo:main_afg}
	The homomorphism 
	\[
	\phi: \mathbb{Q}[\alpha, \delta, \xi^+,\xi^-] / \mathcal{J} \longrightarrow H^*_T(\mathscr{AF}_2)
	\]
	is an isomorphism of rings.
\end{coro}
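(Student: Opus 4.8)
The plan is to deduce the corollary from Theorem~\ref{theo:main_cqG} by passing to the inverse limit over $m$. By Theorem~\ref{theo:afv_in_terms_of_qG} (see~\eqref{eq:afv_in_terms_of_qG}) together with Remark~\ref{qg_with_af_pts}, the affine flag variety $\mathscr{AF}_2$ is the inductive limit of the quiver Grassmannians $X(1,2,m)$ along the closed $T$-equivariant embeddings $\Psi(X(1,2,m))\subset\Psi(X(1,2,m+1))$. Since each $X(1,2,m)$ is projective and equivariant cohomology is continuous along such ascending unions, the first step is to establish
\[
H^*_T(\mathscr{AF}_2)\;\cong\;\varprojlim_m H^*_T(X(1,2,m)),
\]
with transition maps the restriction homomorphisms $\rho_m\colon H^*_T(X(1,2,m+1))\to H^*_T(X(1,2,m))$; to annihilate the $\varprojlim^1$ term in the associated Milnor sequence I would verify that each $\rho_m$ is surjective.

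The technical heart is the combinatorial identification of $\rho_m$. The fixed-point sets are nested: the vertex $(m-q,m+q)$ of $X(1,2,m)$ and the vertex $((m+1)-q,(m+1)+q)$ of $X(1,2,m+1)$ both represent the same affine Weyl element $\sigma_2(q)$ (and similarly, with signs reversed, for $\sigma_1(q)$; cf. Remark~\ref{label_on_vertices}), so $\rho_m$ is simply the restriction of a function to the smaller fixed-point set. Because the values of the Knutson--Tao classes in Definition~\ref{ktclasses} and the vertex labels in Remark~\ref{label_on_vertices} depend only on the offsets and not on $m$, one reads off directly that
\[
\rho_m\big(\xi((m+1)+q,\,(m+1)-q)\big)=\xi(m+q,\,m-q),\qquad
\rho_m\big(\xi((m+1)-q,\,(m+1)+q)\big)=\xi(m-q,\,m+q).
\]
In particular $\rho_m$ carries the two degree-two generators of $X(1,2,m+1)$ to those of $X(1,2,m)$, and since every Knutson--Tao basis class of $X(1,2,m)$ is obtained in this way, each $\rho_m$ is surjective and $\varprojlim^1H^*_T(X(1,2,m))=0$.

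Transporting this through the isomorphisms $\phi_m$ of Theorem~\ref{theo:main_cqG}, the restriction $\rho_m$ corresponds to the surjection $A_{m+1}\twoheadrightarrow A_m$, $\xi^\pm_{m+1}\mapsto\xi^\pm_m$, where $A_m:=\mathbb{Q}[\alpha,\delta,\xi^+_m,\xi^-_m]/\mathcal{I}_m$; this map is well defined because relation~(i) of $\mathcal{I}_m$ is preserved verbatim while the degree-$(m+2)$ relations~(ii) and~(iii) of $A_{m+1}$ land in $\mathcal{I}_m$ after discarding one factor. The structure constants are $m$-independent (as recorded just before the statement, where they are renamed $p^{(q,-q)}_+,\,p^{(-q,q)}_+,\dots$), so relation~(i) passes verbatim to relation~(i) of $\mathcal{J}$, while the finite products in~(ii) and~(iii) of $\mathcal{I}_m$, of increasing degree, assemble in the limit into the infinite-product relations~(ii) and~(iii) of $\mathcal{J}$. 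I would then identify $\mathbb{Q}[\alpha,\delta,\xi^+,\xi^-]/\mathcal{J}$ with $\varprojlim_m A_m$ and set $\phi:=\varprojlim_m\phi_m$, an isomorphism as the inverse limit of isomorphisms between Mittag--Leffler systems.

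The step I expect to be the main obstacle is this last identification: giving $\mathcal{J}$ a precise meaning so that $\mathbb{Q}[\alpha,\delta,\xi^+,\xi^-]/\mathcal{J}$ is genuinely $\varprojlim_m A_m$ and no spurious relation is introduced in the limit. Concretely, one must show that a coherent sequence in $\varprojlim_m A_m$ is exactly a $\mathbb{Q}[\alpha,\delta]$-combination $c_0+\sum_{i\ge1}c_i^+(\xi^+)^i+\sum_{j\ge1}c_j^-(\xi^-)^j$ whose truncations are compatible under the reductions forced by~(ii) and~(iii), and that this presentation matches the decomposition of $H^*_T(\mathscr{AF}_2)$ by cohomological degree.
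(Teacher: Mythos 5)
Your proposal is correct and takes essentially the same route as the paper: the paper's entire proof consists of the two observations that $H^*_T(\mathscr{AF}_2) \cong \varprojlim_m H^*_T(X(1,2,m))$ by \eqref{eq:afv_in_terms_of_qG} and that the result then follows from Theorem~\ref{theo:main_cqG}, so everything else you supply (the $\varprojlim^1$ vanishing via surjectivity of the restriction maps, the $m$-independence of the Knutson--Tao classes under restriction, the compatibility of the presentations $\mathbb{Q}[\alpha,\delta,\xi^+_m,\xi^-_m]/\mathcal{I}$ over $m$) is detail the paper silently omits. The ``main obstacle'' you flag --- giving the infinite products generating $\mathcal{J}$ a precise meaning so that $\mathbb{Q}[\alpha,\delta,\xi^+,\xi^-]/\mathcal{J}$ really is $\varprojlim_m$ of the finite-level presentations --- is a genuine issue with the paper's own formulation that its two-line proof does not address at all, so your version is, if anything, more complete than the original.
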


\begin{proof}
	Using~\eqref{eq:afv_in_terms_of_qG}, we have
	\[
	H^*_T(\mathscr{AF}_2) \cong \varprojlim_m H^*_T(X(1,2,m)).
	\]
	Therefore, the statement follows from Theorem~\ref{theo:main_cqG}.
\end{proof}


\section{Integrality of structure constants  }\label{s:int_str_cons}
In this section, we look at the structure constants of the algebra $H^*_T(X(1,2,m))$ using the Knutson--Tao type basis we defined (cf. Definition \ref{ktclasses}) earlier. We also study their integrality (cf. Theorem \ref{Theo:strconsts}), that is, we show that these constants belong to $\mathbb{Z}[T]=\bz[\alpha, \delta]$.

\begin{lema}\label{Lem:strconsts}
		Let $f \in H^*_T(X(1,2,m))$. Then $f$ can be written as 
		\begin{equation}\label{5.46}
		f = h_0 \, \xi(m,m) + \sum_{q=1}^{m} \big( h_{q,1} \cdot \xi(m+q,m-q) + h_{q,2} \cdot \xi(m-q,m+q) \big),
		\end{equation}
		where $h_0 = f(m,m)$, and for each $1 \le q \le m$,
		\[
		h_{q,1} = 
		\frac{\Big[f - h_0 \xi(m,m) - \sum_{r=1}^{q-1} 
			\big(h_{r,1} \xi(m+r,m-r) + h_{r,2} \xi(m-r,m+r)\big)\Big](m+q,m-q)}
		{p^{(m+q,m-q)}_{(m+q,m-q)}},
		\]
		\[
		h_{q,2} = 
		\frac{\big[f - h_0 \xi(m,m) - \sum_{r=1}^{q-1} 
			\Big(h_{r,1} \xi(m+r,m-r) + h_{r,2} \xi(m-r,m+r)\big)\Big](m-q,m+q)}
		{p^{(m-q,m+q)}_{(m-q,m+q)}}.
		\]
		Moreover, $h_0,\, h_{q,1},\, h_{q,2} \in \bq[\alpha, \delta]$ \eqref{toruspresent} for each $1 \le q \le m$.
\end{lema}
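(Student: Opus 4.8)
The plan is to first invoke the fact, established earlier, that the Knutson--Tao type classes form a $\bq[\alpha,\delta]$-basis of $H^*_T(X_m)$, and then to read off the coefficients by a triangular evaluation of $f$ at the $T$-fixed points, taken in order of increasing distance $q$ from the central vertex $(m,m)$.

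By Theorem~\ref{gen_in_cohom-ring}, the maps $\xi(m,m)$, $\xi(m+q,m-q)$ and $\xi(m-q,m+q)$ for $1\le q\le m$ lie in $H^*_T(X_m)$, and by construction they satisfy the conditions of Definition~\ref{ktclass}. Since the edges of the moment graph $\cg_m$ strictly decrease the quantity $|p|$ (see \eqref{edge:quiver grass}), the graph is acyclic, so Proposition~\ref{ktclassformsbasis} shows that these classes form a $\bq[\alpha,\delta]$-basis of $H^*_T(X_m)$. Hence $f$ admits a unique expansion of the form \eqref{5.46} with all coefficients in $\bq[\alpha,\delta]$; this already establishes the final (``moreover'') assertion. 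It remains only to identify these coefficients with the stated closed forms.

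For $h_0$, I would evaluate \eqref{5.46} at $(m,m)$. By \eqref{eq:xi_mm} one has $\xi(m,m)(m,m)=1$, while for every $q\ge 1$ the vertex $(m,m)$ corresponds to $p=0$ and therefore falls into the ``otherwise'' case of both \eqref{eq:xi_plus_minus} and \eqref{eq:xi_minus_plus}, so that $\xi(m\pm q,m\mp q)(m,m)=0$. Thus $f(m,m)=h_0$. For the remaining coefficients I argue by induction on $q$. Set
\[
g_q := f - h_0\,\xi(m,m) - \sum_{r=1}^{q-1}\Big(h_{r,1}\,\xi(m+r,m-r)+h_{r,2}\,\xi(m-r,m+r)\Big);
\]
by uniqueness of the expansion, $g_q = \sum_{r\ge q}\big(h_{r,1}\,\xi(m+r,m-r)+h_{r,2}\,\xi(m-r,m+r)\big)$. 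Evaluating $g_q$ at $(m+q,m-q)$ and inspecting the support ranges in \eqref{eq:xi_plus_minus}--\eqref{eq:xi_minus_plus}: this vertex corresponds to $p=q$, so every class $\xi(m+r,m-r)$ or $\xi(m-r,m+r)$ with $r>q$ vanishes there (since $p=q$ lies below the ranges $r\le p$ and $r+1\le p$), and the companion class $\xi(m-q,m+q)$ also vanishes (its ``$+$'' support begins at $p=q+1$). Only $\xi(m+q,m-q)$ survives, with value $p^{(m+q,m-q)}_{(m+q,m-q)}$ by Definition~\ref{ktclass}(i) (cf.\ Remark~\ref{prod_of_edge_label}). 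Solving $g_q(m+q,m-q)=h_{q,1}\,p^{(m+q,m-q)}_{(m+q,m-q)}$ then yields the stated formula for $h_{q,1}$, and the symmetric evaluation at $(m-q,m+q)$ yields $h_{q,2}$.

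The only point requiring care is the triangularity claim: at the level-$q$ vertex $(m+q,m-q)$ not only do all strictly higher classes vanish, but so does the companion class $\xi(m-q,m+q)$, so that $h_{q,1}$ and $h_{q,2}$ decouple and each is pinned down by a single fixed-point evaluation. This is immediate from the explicit support ranges in \eqref{eq:xi_plus_minus} and \eqref{eq:xi_minus_plus}, so no genuine obstacle arises; in particular the polynomiality of the quotients defining $h_{q,1}$ and $h_{q,2}$ is guaranteed in advance by the basis property, rather than by any cancellation within the division itself.
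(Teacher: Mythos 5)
Your proof is correct, but it takes a genuinely different route from the paper's. You assume up front that the Knutson--Tao classes form a $\bq[\alpha,\delta]$-basis of $H^*_T(X_m)$ (via Theorem~\ref{gen_in_cohom-ring}, acyclicity of $\cg_m$, and Proposition~\ref{ktclassformsbasis}), which makes existence, uniqueness, and polynomiality of the coefficients automatic; the stated recursive formulas then drop out of the triangular vanishing pattern of the $\xi$'s, exactly as you compute. The paper instead proves the lemma from scratch, never invoking the basis property: it peels off one level at a time, and at stage $q$ uses the GKM congruence relations \eqref{def:eqcohom} along the outgoing edges of $(m\pm q,m\mp q)$ --- whose targets are precisely vertices where the residual class $f_q$ already vanishes --- to conclude that $f_q(m\pm q,m\mp q)$ is divisible by every outgoing edge label, hence (these being pairwise non-proportional linear forms) by their product $p^{(m\pm q,m\mp q)}_{(m\pm q,m\mp q)}$ up to scalar; the final residual $f_{m+1}$ vanishes at all $2m+1$ fixed points and is therefore zero. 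This difference matters for the paper's logical architecture: Remark~\ref{ktclass_indep}, immediately following the lemma, uses the lemma to give a self-contained derivation of the spanning/basis property, and the paper's proof keeps that derivation non-circular, whereas under your argument both the lemma and the basis property rest on the same external input (Tymoczko's Proposition~\ref{ktclassformsbasis}). In short, your route buys brevity --- the divisibility that the paper argues by hand is outsourced to the citation --- at the cost of that independence. Two small inaccuracies of citation, neither fatal: the value $\xi(m+q,m-q)\big((m+q,m-q)\big)=p^{(m+q,m-q)}_{(m+q,m-q)}$ is Definition~\ref{Def:p^_} rather than Definition~\ref{ktclass}(i), since condition (i) holds only up to a scalar against the product of edge labels (cf.\ Remark~\ref{prod_of_edge_label}); and that same scalar discrepancy means the hypotheses of Proposition~\ref{ktclassformsbasis} are met only after harmless rescaling, a point the paper glosses over as well.
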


	\begin{proof}
		For any \( f \in H_T^*(X(1,2,m)) \), define \( h_0 := f(m,m) \). 
		Then, by \eqref{def:eqcohom}, we have \( h_0 \in \mathbb{Q}[\alpha, \delta] \).
		
		Define  
		\[
		f_{1} := f - h_0 \, \xi(m,m) \in H_T^*(X(1,2,m)).
		\]
		Note that \( f_{1}(m,m) = 0 \).  
		By considering the congruence relation on the edge \((m+1,m-1) \to (m,m)\), we obtain  
		\[
		f_{1}(m+1,m-1) - f_{1}(m,m) \equiv 0 
		\pmod{\alpha\big((m+1,m-1)\to (m,m)\big)}.
		\]
		
		From the definition of \( p^{(m+1,m-1)}_{(m+1,m-1)} \) (cf. Remark \ref{prod_of_edge_label}), it follows that  
		\[
		f_{1}(m+1,m-1) = h_{1,1} \, p^{(m+1,m-1)}_{(m+1,m-1)} 
		\quad \text{for some } h_{1,1} \in \mathbb{Q}[\alpha,\delta].
		\]
		Similarly, we have  
		\[
		f_{1}(m-1,m+1) = h_{1,2} \, p^{(m-1,m+1)}_{(m-1,m+1)} 
		\quad \text{for some } h_{1,2} \in \mathbb{Q}[\alpha,\delta].
		\]
		
		Define  
		\[
		\begin{split}
			f_{2} 
			&:= f_1 - h_{1,1} \, \xi(m+1,m-1) - h_{1,2} \, \xi(m-1,m+1) \\
			&= f - h_0 \, \xi(m,m) - h_{1,1} \, \xi(m+1,m-1) - h_{1,2} \, \xi(m-1,m+1)
			\in H_T^*(X(1,2,m)).
		\end{split}
		\]
		Note that \( f_{2}(m,m) = f_{2}(m+1,m-1) = f_{2}(m-1,m+1) = 0 \).
		
		By considering the congruence relations on the edges 
		\((m+2,m-2) \to (m+1,m-1)\) and \((m+2,m-2) \to (m-1,m+1)\), 
		we obtain respectively  
		\begin{align*}
			f_{2}(m+2,m-2) - f_{2}(m+1,m-1)
			&\equiv 0 \pmod{\alpha\big((m+2,m-2)\to (m+1,m-1)\big)}, \\
			f_{2}(m+2,m-2) - f_{2}(m-1,m+1)
			&\equiv 0 \pmod{\alpha\big((m+2,m-2)\to (m-1,m+1)\big)}.
		\end{align*}
		
		Again, from the definition of \( p^{(m+2,m-2)}_{(m+2,m-2)} \) (cf. Remark \ref{prod_of_edge_label}), 
		we can write  
		\[
		f_{2}(m+2,m-2) = h_{2,1} \, p^{(m+2,m-2)}_{(m+2,m-2)} 
		\quad \text{for some } h_{2,1} \in \mathbb{Q}[\alpha,\delta].
		\]
		Similarly,  
		\[
		f_{2}(m-2,m+2) = h_{2,2} \, p^{(m-2,m+2)}_{(m-2,m+2)} 
		\quad \text{for some } h_{2,2} \in \mathbb{Q}[\alpha,\delta].
		\]
		
		Continuing in this way, for \( 3 \leq q \leq m-1 \), we assume  
		\[
		f_{q}(m+q,m-q) = h_{q,1} \, p^{(m+q,m-q)}_{(m+q,m-q)} 
		\quad \text{for some } h_{q,1} \in \mathbb{Q}[\alpha,\delta],
		\]
		and similarly,  
		\[
		f_{q}(m-q,m+q) = h_{q,2} \, p^{(m-q,m+q)}_{(m-q,m+q)} 
		\quad \text{for some } h_{q,2} \in \mathbb{Q}[\alpha,\delta].
		\]
		
		Define 
		\[
		\begin{split}
			f_{q+1} 
			&:= f_q - h_{q,1} \, \xi(m+q,m-q) - h_{q,2} \, \xi(m-q,m+q) \\
			&= f - h_0 \, \xi(m,m) 
			- \sum_{r=1}^{q} h_{r,1} \, \xi(m+r,m-r)
			- \sum_{r=1}^{q} h_{r,2} \, \xi(m-r,m+r)
			\in H_T^*(X(1,2,m)).
		\end{split}
		\]
		Note that 
		\[
		f_{q+1}(m+r,m-r) = 0 = f_{q+1}(m-r,m+r)
		\quad \text{for each } 0 \leq r \leq q.
		\]
		
		By considering the congruence relations on the edges 
		\((m+q+1,m-q-1) \to (m+r,m-r)\) and 
		\((m+q+1,m-q-1) \to (m-r,m+r)\),
		where \(0 \leq r \leq q\) and \(q+1-r\) is odd, 
		we obtain respectively  
		\begin{align*}
			f_{q+1}(m+q+1,m-q-1) - f_{q+1}(m+r,m-r)
			&\equiv 0 \pmod{\alpha\big((m+q+1,m-q-1)\to (m+r,m-r)\big)}, \\
			f_{q+1}(m+q+1,m-q-1) - f_{q+1}(m-r,m+r)
			&\equiv 0 \pmod{\alpha\big((m+q+1,m-q-1)\to (m-r,m+r)\big)}.
		\end{align*}
		
		From the definition of \( p^{(m+q+1,m-q-1)}_{(m+q+1,m-q-1)} \) (cf. Remark \ref{prod_of_edge_label}),
		we can write  
		\[
		f_{q+1}(m+q+1,m-q-1) 
		= h_{q+1,1} \, p^{(m+q+1,m-q-1)}_{(m+q+1,m-q-1)} 
		\quad \text{for some } h_{q+1,1} \in \mathbb{Q}[\alpha,\delta],
		\]
		and similarly,  
		\[
		f_{q+1}(m-q-1,m+q+1) 
		= h_{q+1,2} \, p^{(m-q-1,m+q+1)}_{(m-q-1,m+q+1)} 
		\quad \text{for some } h_{q+1,2} \in \mathbb{Q}[\alpha,\delta].
		\]
		
		Finally, define 
		\[
		\begin{split}
			f_{m+1}
			&:= f_m - h_{m,1} \, \xi(2m,0) - h_{m,2} \, \xi(0,2m) \\
			&= f - h_0 \, \xi(m,m)
			- \sum_{r=1}^{m} h_{r,1} \, \xi(m+r,m-r)
			- \sum_{r=1}^{m} h_{r,2} \, \xi(m-r,m+r)
			\in H_T^*(X(1,2,m)).
		\end{split}
		\]
		Here,
		\[
		f_{m+1}(m+r,m-r) = 0 = f_{m+1}(m-r,m+r)
		\quad \text{for all } r = 0,1,\ldots,m.
		\]
		Hence $f_{m+1} \equiv 0$, i.e. 
		\begin{equation}
			f = h_0 \, \xi(m,m) + \sum_{r=1}^{m} \big( h_{r,1} \cdot \xi(m+r,m-r) + h_{r,2} \cdot \xi(m-r,m+r) \big),
		\end{equation}
		Therefore, the lemma follows.
	\end{proof}
	\begin{rema}\label{ktclass_indep}
The above lemma, in particular, implies that  
\begin{equation}\label{basis_kt_type}
	\Big\{\xi(m,m),~\xi(m+q,m-q),~\xi(m-q,m+q) : 1 \le q \le m\Big\}
\end{equation}
spans $H_T^*(X(1,2,m))$. Moreover, one can verify that the set in \eqref{basis_kt_type} is linearly independent, and hence forms a $\bq[T]$-module basis of $H_T^*(X(1,2,m))$.

		Indeed, suppose that $f = 0$ in \eqref{5.46}. Evaluating 
		\[
		h_0\,\xi(m,m) + \sum_{q=1}^{m} \big( h_{q,1}\,\xi(m+q,m-q) + h_{q,2}\,\xi(m-q,m+q) \big)=0
		\]
		at the vertices of the moment graph of $X(1,2,m)$, starting from $(m,m)$ and proceeding to $(m+1,m-1)$, $(m-1,m+1)$, and so on, 
		one observes from Definition~\ref{ktclasses} that 
		\[h_0 = h_{q,1} = h_{q,2} = 0 \text{ for all $q = 1, \ldots, m$.}\]
	\end{rema}

	\begin{guess}\label{Theo:strconsts}
		For arbitrary Knutson--Tao basis elements 
		$\xi(2m-i,i)$ and $\xi(2m-j,j)$ with $0 \leq i,j \leq 2m$, 
		the product can be expressed as
		\[
	\xi(2m-i,i)\cdot \xi(2m-j,j)= h_0 \, \xi(m,m) 
		+ \sum_{q=1}^{m} \big( h_{q,1} \, \xi(m+q,m-q) 
		+ h_{q,2} \, \xi(m-q,m+q) \big),
		\]
		where $ h_0,\, h_{q,1},\, h_{q,2} \in \mathbb{Z}[\alpha,\delta] $ for each $ 1 \le r \le m $.
	\end{guess}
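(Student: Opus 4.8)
The plan is to compute the structure constants by applying the recursion of Lemma~\ref{Lem:strconsts} to the pointwise product $f := \xi(2m-i,i)\cdot\xi(2m-j,j)$, and then to upgrade the rationality of the resulting coefficients—which Lemma~\ref{Lem:strconsts} already guarantees in $\mathbb{Q}[\alpha,\delta]$—to integrality by a Gauss-lemma divisibility argument. The crucial preliminary observation is that every value of a Knutson--Tao basis element at a fixed point already lies in $\mathbb{Z}[\alpha,\delta]$: inspecting Definition~\ref{ktclasses}, each such value is $0$, $1$, or a binomial coefficient times a product of linear forms $-\alpha+k\delta$ or $\alpha+k\delta$ with $k\in\mathbb{Z}$. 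Since multiplication in $H_T^*(X(1,2,m))$ is computed vertexwise, it follows that $f(2m-l,l)=\xi(2m-i,i)(2m-l,l)\cdot\xi(2m-j,j)(2m-l,l)\in\mathbb{Z}[\alpha,\delta]$ for every vertex $(2m-l,l)$.

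I would then run the induction on $q$ underlying Lemma~\ref{Lem:strconsts}. The base case is immediate, since $h_0=f(m,m)\in\mathbb{Z}[\alpha,\delta]$ by the observation above. For the inductive step, suppose $h_0$ and $h_{r,1},h_{r,2}\in\mathbb{Z}[\alpha,\delta]$ for all $r<q$. Setting
\[
f_q := f - h_0\,\xi(m,m) - \sum_{r=1}^{q-1}\big(h_{r,1}\,\xi(m+r,m-r)+h_{r,2}\,\xi(m-r,m+r)\big),
\]
the value $f_q(m+q,m-q)$ is a $\mathbb{Z}[\alpha,\delta]$-combination of products of elements of $\mathbb{Z}[\alpha,\delta]$, hence lies in $\mathbb{Z}[\alpha,\delta]$; likewise $f_q(m-q,m+q)\in\mathbb{Z}[\alpha,\delta]$. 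A direct evaluation of Definition~\ref{ktclasses} at the diagonal vertex (cf.\ Remark~\ref{prod_of_edge_label}) identifies the denominators appearing in Lemma~\ref{Lem:strconsts} as
\[
p^{(m+q,m-q)}_{(m+q,m-q)}=\prod_{k=0}^{q-1}\big(-\alpha+k\delta\big), \qquad p^{(m-q,m+q)}_{(m-q,m+q)}=\prod_{k=1}^{q}\big(\alpha+k\delta\big).
\]

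It remains to show that these denominators divide the integral numerators \emph{in} $\mathbb{Z}[\alpha,\delta]$, which is the heart of the matter. Each factor $-\alpha+k\delta$ and $\alpha+k\delta$ is a primitive polynomial in the UFD $\mathbb{Z}[\alpha,\delta]$, and for distinct $k$ these linear forms are pairwise non-associate. By the standard consequence of Gauss's lemma that a primitive polynomial dividing an element of $\mathbb{Z}[\alpha,\delta]$ in $\mathbb{Q}[\alpha,\delta]$ already divides it in $\mathbb{Z}[\alpha,\delta]$, each factor divides $f_q(m+q,m-q)$ in $\mathbb{Z}[\alpha,\delta]$; since the factors are pairwise coprime primes, so does their product. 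Hence $h_{q,1}\in\mathbb{Z}[\alpha,\delta]$, and symmetrically $h_{q,2}\in\mathbb{Z}[\alpha,\delta]$, closing the induction.

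I expect the main obstacle to be precisely this passage from rational to integral divisibility: Lemma~\ref{Lem:strconsts} alone yields only $\mathbb{Q}[\alpha,\delta]$-coefficients, and one must exploit the special factored form of the denominators as products of primitive, pairwise non-associate linear forms for Gauss's lemma to apply. I would therefore take care to verify the two inputs the argument rests on—that the numerators remain in $\mathbb{Z}[\alpha,\delta]$ at every stage of the recursion (guaranteed by the inductive hypothesis together with the integrality of all Knutson--Tao values), and that the denominators factor exactly as displayed above.
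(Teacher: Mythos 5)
Your proposal is correct and follows essentially the same route as the paper: both take $f=\xi(2m-i,i)\cdot\xi(2m-j,j)$, note that all Knutson--Tao values (hence all numerators arising in the recursion of Lemma~\ref{Lem:strconsts}) lie in $\mathbb{Z}[\alpha,\delta]$, and induct on $q$ to conclude integrality of the coefficients. The only difference is one of completeness in your favor: you compute the denominators $p^{(m+q,m-q)}_{(m+q,m-q)}=\prod_{k=0}^{q-1}(-\alpha+k\delta)$ and $p^{(m-q,m+q)}_{(m-q,m+q)}=\prod_{k=1}^{q}(\alpha+k\delta)$ explicitly and justify the passage from divisibility in $\mathbb{Q}[\alpha,\delta]$ to divisibility in $\mathbb{Z}[\alpha,\delta]$ via Gauss's lemma applied to these primitive, pairwise non-associate linear forms, a step the paper asserts implicitly by remarking that the factors have leading coefficients $\pm 1$.
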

	
\begin{proof}
	Consider the product 
	\[	f = \xi(2m-i,i) \cdot \xi(2m-j,j).	\]
	From Definition~\ref{ktclasses}, we have \( f(2m-l,l) \in \mathbb{Z}[\alpha,\delta] \) for each \( 0 \le l \le 2m \).
	
	We now proceed by induction on \( q \).  
	By Definition~\ref{ktclasses}, it follows that \( h_0 = f(m,m) \in \mathbb{Z}[\alpha,\delta] \).  
	Moreover, by Lemma~\ref{Lem:strconsts}, we obtain
	\[
	h_{1,1} = 
	\frac{\big[f - h_0 \, \xi(m,m)\big](m+1,m-1)}
	{p^{(m+1,m-1)}_{(m+1,m-1)}} \in \mathbb{Q}[\alpha,\delta],
	\qquad
	h_{1,2} = 
	\frac{\big[f - h_0 \, \xi(m,m)\big](m-1,m+1)}
	{p^{(m-1,m+1)}_{(m-1,m+1)}} \in \mathbb{Q}[\alpha,\delta].
	\]
	
	By Definitions~\ref{Def:p^_} and~\ref{ktclasses}, both 
	\( p^{(m+1,m-1)}_{(m+1,m-1)} \) and \( p^{(m-1,m+1)}_{(m-1,m+1)} \) 
	are products of linear factors whose coefficients lie in \( \{\pm 1\} \),
	while the corresponding numerators belong to \( \mathbb{Z}[\alpha,\delta] \).  
	Hence, \( h_{1,1},\, h_{1,2} \in \mathbb{Z}[\alpha,\delta] \).
	
	Now assume that for some \( q \le m \), we have 
	\( h_{r,1},\, h_{r,2} \in \mathbb{Z}[\alpha,\delta] \) for all \( 1 \le r \le q-1 \).
	By Lemma~\ref{Lem:strconsts}, it follows that
	\( h_{q,1},\, h_{q,2} \in \mathbb{Q}[\alpha,\delta] \).
	Again, by Definitions~\ref{Def:p^_} and~\ref{ktclasses}, 
	\( p^{(m+q,m-q)}_{(m+q,m-q)} \) and 
	\( p^{(m-q,m+q)}_{(m-q,m+q)} \)
	are products of linear factors with coefficients in \( \{\pm 1\} \),
	whereas the numerators lie in \( \mathbb{Z}[\alpha,\delta] \), since
	\( f(m+q,m-q),\, f(m-q,m+q) \in \mathbb{Z}[\alpha,\delta] \)
	and \( h_{r,1},\, h_{r,2} \in \mathbb{Z}[\alpha,\delta] \) for all \( 1 \le r \le q-1 \).
	Therefore, \( h_{q,1},\, h_{q,2} \in \mathbb{Z}[\alpha,\delta] \).
	
	By induction, the claim holds for all \( 1 \le q \le m \).
	Hence, the theorem follows.
\end{proof}

{\bf Acknowledgement:} 
The author expresses sincere gratitude to Professor Evgeny Feigin for introducing him to quiver Grassmannians, and for his constant encouragement,  insightful discussions, and valuable suggestions throughout this work. This research was supported by a postdoctoral fellowship funded through Professor Feigin’s ISF grant 493/24.





\end{document}